\documentclass[10pt,leqno]{amsart}
\usepackage{graphicx}

\usepackage{indentfirst,biblatex}
\bibliography{bibliography}
\theoremstyle{definition}
\usepackage{comment}
\usepackage{amssymb,amsthm,amsmath}
\usepackage{xcolor,paralist,hyperref,etoolbox,setspace}
\newtheorem{theorem}{Theorem}[]
\newtheorem{thm}[theorem]{Theorem}
\theoremstyle{definition}
\newtheorem{definition}[theorem]{Definition}
\newtheorem{dfn}[theorem]{Definition}

\newtheorem{lemma}[theorem]{Lemma}
\newtheorem{lem}[theorem]{Lemma}

\newtheorem{prop}[theorem]{Proposition}
\newtheorem{corollary}[theorem]{Corollary}
\newtheorem{cor}[theorem]{Corollary}

\newcommand{\ac}{\operatorname{AC}}
\newcommand{\inv}{^{-1}}
\newcommand{\res}{\upharpoonleft}
\newcommand{\s}{\mathcal}
\newcommand{\bb}{\mathbb}
\newcommand{\fix}{\operatorname{fix}}
\newcommand{\sym}{\operatorname{sym}}
\newcommand{\at}{\operatorname{at}}
\newcommand{\ip}[1]{\langle {#1} \rangle}
\newcommand{\lra}{\Leftrightarrow}
\newcommand{\im}{\operatorname{im}}
\newcommand{\dom}{\operatorname{dom}}
\newcommand{\id}{\operatorname{id}}
\newcommand{\aut}{\operatorname{aut}}
\newcommand{\supp}{\operatorname{supp}}

\newcommand{\zfa}{\text{ZFA}}
\newcommand{\zf}{\text{ZF}}

\newcommand{\sat}{\text{SAT}}
\newcommand{\maj}{\operatorname{maj}}
\newcommand{\pol}{\operatorname{Pol}}

\newcommand{\np}{\text{NP}}
\hypersetup{ colorlinks=true, linkcolor=black, filecolor=black, urlcolor=black }

\usepackage{tikz, tikz-cd}
\usepackage{todonotes}

\begin{document}
\title{Compactness principles for CSPs and the Axiom of Choice} 

\author[A Li]{Anthony Li}
\address{Independent researcher}
\email{ali4@andrew.cmu.edu}

\author[I Shah]{Ishin Shah}
\address{Carnegie Mellon University, Dept. of Mathematics, 5000 Forbes Ave, Pittsburgh, PA 15213}
\email{ipshah@andrew.cmu.edu}

\author[M Snodgrass]{Matthew Snodgrass}
\address{Carnegie Mellon University, Dept. of Mathematics, 5000 Forbes Ave, Pittsburgh, PA 15213}
\email{msnogra@andrew.cmu.edu}

\author[R Thornton]{Riley Thornton}
\address{University of Michigan, Dept. of Mathematics, 530 Church St, Ann Arbor, MI 48109}
\email{rileyjt@umich.edu}

\author[R Zhou]{Rui Zhou}
\address{UCLA, Dept. of Mathematics, 520 Portola Plaza, Los Angeles, CA 90095}
\email{ruizhou@math.ucla.edu}

\let\thefootnote\relax

\begin{abstract}

We can associate a compactness principle $\s K_{\s D}$ to the Constraint Satisfaction Problem (CSP) with constraint library $\mathcal{D}$. These principles were studied by Kat\'ay, T\'oth, and Vidny\'anszky and by Rorabaugh, Tardif, and Wehlau. We expand their work comparing the strength of $K_{\mathcal{D}}$ for varying structures $\mathcal{D}$. We characterize the structures whose compactness principles are provable from $\zf$; these turn out to be the width-1 structures. We compare some important compactness principles, namely those of $2\sat$, $3\text{LIN}2$, and $K_2$, settling a question of Kat\'ay, T\'oth, and Vidny\'anszky. And, we find an infinite chain and an infinite anti-chain of compactness principles related to directed cycles and finite choice.
\end{abstract} 

\bigskip
\maketitle
\section{Introduction}
Constraint Satisfaction Problems (CSPs) are a well-studied class of problems originating from computer science. Given a finite relational structure $\mathcal{D}$, which we think of as the constraint library, the corresponding problem $\text{CSP}(\mathcal{D})$ is the problem of deciding whether there is a homomorphism from a structure $\mathcal{X}$ (an instance) to $\mathcal{D}$. Some examples include:
\begin{itemize}
    \item For $k\sat$, the structure on $\{0,1\}$ with all $k$-ary relations, $\text{CSP}(k\sat)$ is equivalent the satisfaction problem for Boolean formulae in CNF with $k$-ary disjuncts of literals.
    \item For the complete graph $K_n$, $\text{CSP}(K_n)$ is the graph $n$-coloring problem
    \item  For $\bb F_p$ the structure on $\bb Z/p\bb Z$ with all affine relations, $\text{CSP}(\mathbb{F}_p)$ is the problem of solving linear equations over the field of order $p$
\end{itemize}
(More precise definitions are given in Section \ref{sec: notation}). CSPs can be ordered by pp-constructibility, a combinatorial notion of reduction (see Section \ref{sec: Tools}), and the pp-constructibility relation on CSPs is in Galois correspondence with clones of polymorphisms under various algebraic comparisons. The upshot of this is that most complexity classes of CSPs can be characterized using algebraic methods. For example, $\text{CSP}(\mathcal{D})$ is solvable by linear relaxation if and only if $\mathcal{D}$ has a symmetric polymorphism. For a survey of algorithmic applications, see \cite{DagstuhlSurvey}.\par
A handful of authors have recently applied these tools in set theory. For example, the fourth author observed that most pp-constructions also yield Borel reductions in descriptive set theory \cite{thornton2022algebraicapproachborelcsps}. Most relevant to this work, Rorabaugh, Tardif, and Wehlau and, independently, Kat\'ay, T\'oth, and Vidny\'anszky introduced a family of compactness principles associated to CSPs \cite{kátay2023cspdichotomyaxiomchoice}\cite{RorabaughTardifWehlau}.

\begin{definition}
    For a constraint library $\mathcal{D}$, $K_{\mathcal{D}}$ is the statement that ``For every instance $\mathcal{X}$ of $\mathcal{D}$, if every finite substructure of $\mathcal{X}$ has a homomorphism to $\mathcal{D}$, then $\mathcal{X}$ has a homomorphism to $\mathcal{D}$".
\end{definition}

One can show that $\zf+UFL$ (the Ultrafilter Lemma) implies $K_\mathcal{D}$ for every finite constraint library $\mathcal{D}$. Kat\'ay, T\'oth and Vidny\'anszky characterize the CSPs whose compactness principals have maximal strength
\begin{theorem}[\cite{kátay2023cspdichotomyaxiomchoice}]
    For any finite relational structure $\mathcal{D}$, if $\mathcal{D}$ has no Taylor polymorphism, then $\zf+K_{\mathcal{D}}\vdash\text{UFL}$. Conversely, assuming $\zf$ is consistent, if $\mathcal{D}$ has a Taylor polymorphism then $\zf+K_{\mathcal{D}}\not\vdash\text{UFL}$.
\end{theorem}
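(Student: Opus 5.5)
The statement has two halves, and I would treat them separately. For the forward direction, I would first reduce to a single hard structure. By the Bulatov–Jeavons–Krokhin / Barto–Opršal–Pinsker theory, a finite $\mathcal{D}$ with no Taylor polymorphism pp-constructs every finite structure, in particular $3\sat$ (or $K_3$). The second step is to check that a pp-construction of $\mathcal{E}$ from $\mathcal{D}$ transfers compactness, i.e. $\zf + K_{\mathcal{D}} \vdash K_{\mathcal{E}}$. For this I would verify the statement separately for the three pieces of a pp-construction:
\begin{itemize}
\item pp-definitions: an instance of $\mathcal{E}$ is turned into an instance of $\mathcal{D}$ by adding fresh witness variables for each existential, uniformly and without choice;
\item pp-powers: each variable is replaced by a tuple of variables;
\item homomorphic equivalence: compose with the fixed finite homomorphisms.
\end{itemize}
In each case finite substructures of the translated instance sit inside translations of finite substructures of the original, and solutions pull back definably. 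The third step is the classical fact that $\zf + K_{3\sat}$, i.e. compactness of propositional logic with $3$-clauses, implies $\text{UFL}$. Any CNF reduces to 3-CNF by adding fresh variables indexed canonically by clauses, and propositional compactness for arbitrary sets of formulas is equivalent to $\text{UFL}$. I expect this step to be routine.

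For the converse, I would build a permutation model of $\zfa$ and then transfer it to $\zf$ by Jech–Sochor embedding. This works because the failure of $\text{UFL}$ and the instances of $K_{\mathcal{D}}$ are boundable statements. A natural first try, since $\mathcal{D}$ has a Taylor polymorphism, is the Barto–Kozik theorem: there is a cyclic polymorphism of every prime arity $p > |D|$. The plan is a model in which the set of atoms has a structure making $\text{UFL}$ fail, for example an atom set where no nonprincipal ultrafilter on the atoms is supported. At the same time, every finitely satisfiable instance $\mathcal{X}$ with finite support $E$ should have a solution in the model.

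To produce that solution, I would take all solutions of $\mathcal{X}$ in the full universe, which exist by choice there, together with their images under $\fix(E)$. I would then combine them with a cyclic or otherwise symmetric polymorphism, so that the averaged solution is invariant under a larger group, ideally under $\fix(E)$ itself. Concretely, the orbit of a solution under $\fix(E)$ may be infinite. One would then need symmetric polymorphisms of all arities, which is only available for bounded-width-type $\mathcal{D}$. For general Taylor $\mathcal{D}$ I would instead choose a model in which the relevant orbits are finite of prime size $p$, for example atoms arranged in cycles of length $p$ for large primes $p$. Cyclic polymorphisms of arity $p$ then suffice to average over each orbit.

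The main obstacle is the design of this permutation model. It must kill $\text{UFL}$ while keeping every symmetric instance's solution orbits controllable by cyclic polymorphisms of suitable prime arities. Making the orbit structure composed of cycles of large prime length, while still ensuring that a finitely satisfiable symmetric instance has some full solution fixed by a finite-index subgroup, is the delicate part. I would first attempt a model built from a countable union of $\mathbb{Z}/p$-torsors over increasing primes, and apply cyclic terms coordinatewise in stages.
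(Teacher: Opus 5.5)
Your forward half is sound and is the natural route. No Taylor polymorphism gives a pp-construction of $3\sat$ (Barto--Opr\v{s}al--Pinsker). Pp-constructions transfer compactness over $\zf$, which is the paper's Theorem~\ref{tool3}, quoted from K\'atay--T\'oth--Vidny\'anszky. Finally, $K_{3\sat}$ yields UFL. The paper only cites this theorem, so there is no in-paper proof to match. The problems are in your converse, which as written is a plan rather than a proof, and it has two genuine gaps.

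\textbf{Producing a hereditarily symmetric solution.} You leave this key step open, and the remedy you suggest conflates two kinds of orbits. Take a model that kills UFL using infinitely many independent $p$-cycles, such as the paper's $\s M_p$. There the atoms, and indeed all hereditarily symmetric elements, have finite $\fix(E)$-orbits. But a ground-model solution $s:X\to D$ in general has an infinite orbit, because $\fix(E)$ is an infinite product of copies of $\bb Z/p\bb Z$. So making atom-orbits of prime size does not make the orbits you must average over finite, and one application of a $p$-ary cyclic term only symmetrizes along one generator.

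The missing idea is to average only on finite invariant pieces and then use compactness in the choice ground model on the orbit quotient.
\begin{enumerate}
\item Let $F\subseteq X$ be finite and $\fix(E)$-invariant. Then $\fix(E)$ acts on the induced instance $\s X_F$ through a finite group $(\bb Z/p\bb Z)^m$, generated by commuting rotations $g_1,\dots,g_m$.
\item Take any solution $s$ of $\s X_F$. Replace it successively by $c(s,g_is,\dots,g_i^{p-1}s)$ for $i=1,\dots,m$, where $c$ is a $p$-ary cyclic polymorphism with $p>|D|$ (Barto--Kozik).
\item Each replacement is again a solution. Cyclicity makes it $g_i$-invariant, and commutativity preserves the invariance already obtained.
\item Hence the quotient instance $\s X/\fix(E)$ is finitely satisfiable. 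It therefore has a ground-model solution, which lifts to a $\fix(E)$-invariant, hence hereditarily symmetric, solution of $\s X$.
\end{enumerate}
This is exactly the orbit-quotient argument the paper uses for $\s M_p\vDash K_{C_q}$ in Theorem~\ref{thm: antichain}. The cyclic term does the work that the special structure of $q$-cycles does there. A single prime $p>|D|$ suffices, so increasing primes are unnecessary. Moreover $\s M_p\vDash\neg\text{UFL}$, since $K_{C_p}$ fails there.

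\textbf{The transfer to $\zf$.} This does not work for the reason you give. The failure of UFL is boundable, but $K_{\s D}$ is not: it quantifies over all instances. The Jech--Sochor embedding only identifies $\s P^\alpha(A)$ of the permutation model with $\s P^\alpha(\tilde A)$ of the $\zf$ model. Instances in the $\zf$ model outside the image of $x\mapsto\tilde x$ are therefore not controlled. That compactness of each individual instance is boundable does not help.

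The paper flags exactly this: its results on $K_{C_p}$ ``do not pass immediately through the embedding theorems,'' and it bypasses the issue for cycles via Truss's criterion. Even for its Boolean models, it needs least supports and clause (4) of Theorem~\ref{thm: embedding theorem} just to transfer the ordering principle. You need a real argument here. Options include a Pincus-style transfer theorem for injectively boundable statements, after checking that $K_{\s D}$ can be put in that form, or a direct symmetric-extension model in which the averaging argument above is redone.
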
 Bulatov and Zhuk independently showed that the same algebraic criterion characterizes polynomial-time solvable CSPs \cite{Bulatov}\cite{Zhuk}. Thus, if $\text{P}\neq\np$ and $\zf$ is consistent, then $\zf+K_{\mathcal{D}}$ precisely when $\text{CSP}(\mathcal{D})$ is NP-complete.

In this paper, we expand on this work and explore the structure of the provability order for compactness principles. We construct infinite chains and anti-chains, characterize the provable instances of compactness, and nearly classify the compactness principles for Boolean structures.
\subsection{Results}
First, we use a result of the fourth author and the arc-consistency algorithm to characterize compactness principles of minimum strength:
\begin{theorem}[\zf]
    For a finite relational structure $\s D$, $\zf$ proves $K_{\mathcal{D}}$ if and only if $\mathcal{D}$ admits totally symmetric polymorphisms of arbitrarily large arity.
\end{theorem}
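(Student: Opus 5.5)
We prove the two directions separately, using the classical equivalence (Feder--Vardi, Dalmau--Pearson) that $\mathcal{D}$ has totally symmetric polymorphisms of all arities if and only if $\mathcal{D}$ has width 1. Width 1 means that the arc-consistency algorithm correctly decides $\text{CSP}(\mathcal{D})$. Equivalently, there is a polymorphism $\mathcal{P}^+(\mathcal{D})\to\mathcal{D}$, where $\mathcal{P}^+(\mathcal{D})$ is the structure on nonempty subsets of $D$ whose relations are $R^{\mathcal{P}^+}=\{(S_1,\dots,S_k): \text{each } S_i \text{ is the } i\text{-th projection of some nonempty } T\subseteq R\}$. From a totally symmetric polymorphism $f$ of arity $n\ge |D|$ we get $g(S)=f(s_1,\dots,s_n)$, where $s_1,\dots,s_n$ enumerates $S$ with repetitions. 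The map $g$ is well defined by total symmetry, and it is a homomorphism $\mathcal{P}^+(\mathcal{D})\to\mathcal{D}$. Conversely, such a $g$ yields totally symmetric polymorphisms $(x_1,\dots,x_n)\mapsto g(\{x_1,\dots,x_n\})$ of every arity. All of this is finitary and can be cited or reproved inside $\zf$.

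\emph{($\Leftarrow$)} Assume $\mathcal{D}$ has width 1 and let $\mathcal{X}$ be an instance all of whose finite substructures map to $\mathcal{D}$. The key point is that arc consistency is canonical and needs no choices. For each $x\in X$, let $L(x)\subseteq D$ be the set of values $d$ such that \emph{every} finite substructure $\mathcal{F}\ni x$ admits a homomorphism to $\mathcal{D}$ sending $x$ to $d$. This set is definable without choice.
\begin{enumerate}
\item $L(x)\neq\emptyset$. If not, for each $d\in D$ pick a witness $\mathcal{F}_d$; only finitely many choices are needed. The union $\bigcup_d\mathcal{F}_d$ is finite and has no homomorphism, a contradiction.
\item $L$ is a homomorphism $\mathcal{X}\to\mathcal{P}^+(\mathcal{D})$. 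Let $(x_1,\dots,x_k)\in R^{\mathcal{X}}$, and let $T$ be the set of tuples $t\in R^{\mathcal{D}}$ such that every finite $\mathcal{F}$ containing the $x_i$ admits a homomorphism sending $x_i\mapsto t_i$. A finite-union argument of the same kind shows that the projections of $T$ are exactly the sets $L(x_i)$.
\end{enumerate}
Composing $L$ with $g$ gives a homomorphism $\mathcal{X}\to\mathcal{D}$, all in $\zf$. This definable-list argument is presumably the ``result of the fourth author'' alluded to.

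\emph{($\Rightarrow$)} Suppose $\mathcal{D}$ lacks totally symmetric polymorphisms of some arity, so it does not have width 1. We need a model of $\zf$ in which $K_{\mathcal{D}}$ fails. Since $\mathcal{D}$ does not have width 1, $\mathcal{P}^+(\mathcal{D})\not\to\mathcal{D}$, yet $\mathcal{P}^+(\mathcal{D})$ is arc-consistent, and by compactness of the relevant finite tree duality, every finite tree-like structure mapping to $\mathcal{P}^+(\mathcal{D})$ maps to $\mathcal{D}$. The plan is a permutation model (then Jech--Sochor transfer to $\zf$).
\begin{enumerate}
\item Build an instance $\mathcal{X}$ that covers $\mathcal{P}^+(\mathcal{D})$ with infinite girth, i.e.\ a ``universal cover'' that is a forest of atoms. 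Each atom carries a label in $\mathcal{P}^+(D)$, and every constraint-neighbourhood is replicated infinitely often.
\item Since every finite substructure of $\mathcal{X}$ is a finite forest mapping to $\mathcal{P}^+(\mathcal{D})$, it maps to $\mathcal{D}$ by the tree-duality characterization of arc consistency.
\item Take the group of label-preserving automorphisms of $\mathcal{X}$ with finite supports. A homomorphism $h\in$ the model would have a finite support $E$. Far from $E$, symmetry forces $h$ to depend only on the label, i.e.\ it factors through a map $\mathcal{P}^+(\mathcal{D})\to\mathcal{D}$.
\item To see that this induced map is a homomorphism, use that the forest realizes every relation tuple of $\mathcal{P}^+(\mathcal{D})$ away from $E$. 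This yields a contradiction.
\end{enumerate}

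The main obstacle is this last direction, specifically step (3): the symmetric structure must be rich enough that the automorphisms fixing $E$ act transitively on all atoms with a given label outside a neighbourhood of $E$, while remaining tree-like enough for step (2). I would first try a Fraïssé-style homogeneous free amalgamation of labelled acyclic structures, for which extension of partial isomorphisms gives the required transitivity.
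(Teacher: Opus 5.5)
Your ($\Leftarrow$) direction is correct. It is slightly slicker than the paper's proof. The paper iterates the arc-consistency operator $F^{\mathcal X}_m$ and shows by induction that each stage is already determined on a finite substructure. Your list $L(x)$ of values surviving on every finite substructure is arc-consistent directly, by a finite-union argument. (One slip: a TS polymorphism of arity $|D|$ does not by itself give a homomorphism $\mathcal{P}^+(\mathcal{D})\to\mathcal{D}$. You need arity at least $k|D|$ for $k$-ary relations, plus the paper's pigeonhole over the finitely many set maps when there are infinitely many relations, as for $\mathbb{F}_2$.)

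The genuine gap is step (3) of ($\Rightarrow$), and the requirement as you state it cannot be met. In a tree-like instance, any automorphism fixing $E$ pointwise preserves the distance to each point of $E$. So in a component meeting $E$ the orbits of $\fix(E)$ cannot be whole label classes, however far from $E$ you look. This is a real obstruction, not a technicality: for $\mathcal{D}=K_2$ and a connected tree, colouring by parity of distance from a vertex $v$ is a solution with support $\{v\}$. Your proposed Fra\"iss\'e limit does not exist either, because finite labelled forests lack amalgamation. For graphs, over two non-adjacent vertices $a,c$, induced $a$--$c$ paths of lengths $2$ and $3$ cannot both embed in one forest.

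The missing idea is to argue in components that avoid $E$, not in regions far from $E$. Let $\mathcal{U}$ be a locally homogeneous cover of $\mathcal{P}^+(\mathcal{D})$ with the following properties:
\begin{itemize}
\item it is a labelled instance whose incidence graph is a forest, containing a vertex of each label;
\item a vertex labelled $S$ sits at position $i$ of exactly $\aleph_0$ constraints labelled $\bar S$, for every $R$ and every $\bar S\in R^{\mathcal{P}^+}$ with $S_i=S$;
\item it lies in no other constraints.
\end{itemize}
Let $\mathcal{X}$ be $\omega$ disjoint copies of $\mathcal{U}$, and let $G$ be its label-preserving automorphism group. A back-and-forth along the incidence tree shows that the label-preserving automorphisms of $\mathcal{U}$ act transitively on each label class; here $\aleph_0-1=\aleph_0$ absorbs the parent constraint.

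Given a solution $h$ with finite support $E$, pick a copy $\mathcal{U}'$ disjoint from $E$. Then $\fix(E)$ contains every label-preserving automorphism of $\mathcal{U}'$ extended by the identity. Hence $h\restriction\mathcal{U}'$ depends only on labels. Since $\mathcal{U}'$ realizes every tuple of $\mathcal{P}^+(\mathcal{D})$, the induced map is a homomorphism $\mathcal{P}^+(\mathcal{D})\to\mathcal{D}$, a contradiction. Because $\neg K_{\mathcal{D}}$ is witnessed by a boundable statement, Jech--Sochor then gives a $\zf$ model.

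Repaired this way, your route is more elementary than the paper's. The paper uses the fourth author's theorem: if $\mathcal{D}$ is not width-1, then $\zf+\mathrm{DC}$ yields a Borel instance with all finite subinstances solvable but no Baire measurable solution. This theorem, not the list argument, is the result alluded to. The paper then evaluates that instance in Shelah's model of $\zf+\mathrm{DC}$ in which all sets have the Baire property. The heavier route buys the stronger corollary that even $\zf+\mathrm{DC}$ does not prove $K_{\mathcal{D}}$.
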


See Section \ref{sec: Tools} for the relevant definitions. This result has also been announced by Tardif, though his proof relies on unpublished work of Solovay, and it is unclear what cardinal hypotheses he requires. As observed in \cite{kátay2023cspdichotomyaxiomchoice}, if $\mathcal{D}\leq_{\text{pp}}\mathcal{E}$ then $\zf+K_{\mathcal{E}}\vdash K_{\mathcal{D}}$; using this result, along with the algebraic characterization of pp-definability and classic work of Post on clones of Boolean functions, we find there are at most 5 classes of $K_\mathcal{D}$ for Boolean $\mathcal{D}$:
\begin{prop}[\zf]
    If $|\mathcal{D}|=2$, then $K_\mathcal{D}$ is equivalent to one of the following:
    \begin{enumerate}
        \item $K_{\varnothing}\equiv \top$
        \item $K_{K_2}\equiv \ac(2)$
        \item $K_{\mathbb{F}_2}$
        \item $K_{2\sat}$
        \item $K_{3\sat}\equiv \text{UFL}$
    \end{enumerate} Where $\ac(2)$ is the axiom of choice for pairs.
\end{prop}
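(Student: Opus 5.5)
The plan is to use Post's lattice together with the observation (from \cite{kátay2023cspdichotomyaxiomchoice}) that $\mathcal{D}\leq_{\text{pp}}\mathcal{E}$ implies $\zf+K_{\mathcal{E}}\vdash K_{\mathcal{D}}$. Since pp-constructibility between structures on $\{0,1\}$ is governed by inclusion of polymorphism clones (pp-definability corresponds to reverse clone inclusion), $K_{\mathcal{D}}$ depends only on $\pol(\mathcal{D})$ up to this equivalence. Post's classification of Boolean clones then lets us argue case by case on which ``key'' polymorphisms $\pol(\mathcal{D})$ contains.

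\textbf{Step 1 (the UFL case).} If $\pol(\mathcal{D})$ has no Taylor operation, then by the theorem of Kat\'ay, T\'oth and Vidny\'anszky, $K_{\mathcal{D}}$ implies UFL. UFL implies every $K_{\mathcal{D}}$, so $K_{\mathcal{D}}\equiv\text{UFL}\equiv K_{3\sat}$. Otherwise, a standard consequence of Post's lattice (Schaefer's theorem) is that a Boolean clone containing a Taylor operation contains one of the following: a constant, $\wedge$, $\vee$, the majority operation $\maj$, or the minority $x+y+z$.

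\textbf{Step 2 (the provable case).} If $\pol(\mathcal{D})$ contains a constant, $\wedge$, or $\vee$, then there are totally symmetric polymorphisms of every arity: the constant itself, or the $n$-ary meet or join. The earlier theorem then gives $\zf\vdash K_{\mathcal{D}}$, which is class (1).

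\textbf{Step 3 (the remaining clones).} Now assume $\pol(\mathcal{D})$ contains no constant, no $\wedge$ and no $\vee$, but does contain $\maj$ or the minority. Such a clone avoids constants, so it lies within the self-dual clone or the idempotent part, and Post's lattice leaves few options:
\begin{enumerate}
\item If it contains both $\maj$ and the minority, it contains all self-dual idempotent functions together with negation. This is $\pol(K_2)$, possibly enlarged by the negation $\neg x$. Either way $\mathcal{D}$ is pp-equivalent to $K_2$.
\item If it contains the minority but not $\maj$, it lies between the clone generated by $x+y+z$ and the affine clone $\pol(\mathbb{F}_2)$. The clones involving $x+y+1$, $\neg$, or constants are excluded or pp-equivalent to $\mathbb{F}_2$, so $\mathcal{D}$ is pp-equivalent to $\mathbb{F}_2$.
\item If it contains $\maj$ but not the minority, it is one of the clones generated by $\maj$, by $\maj$ with $\neg$, or by $\maj$ with some threshold functions. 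The last kind gives pp-equivalence to $2\sat$; the threshold-function subclones of $\maj$ must be checked to be pp-equivalent to $2\sat$ (or to collapse elsewhere).
\end{enumerate}
Here ``pp-equivalent'' should be taken in the pp-constructibility (minion) sense rather than pp-definability. This matters because, for example, the clone generated by $\maj$ and $\neg$, and the infinitely many clones in Post's lattice lying above $2\sat$'s, need not be pp-definitionally equivalent to $2\sat$; we need their polymorphism minions to be homomorphically equivalent. For those I would exhibit minion homomorphisms directly, for instance $f\mapsto f(x,\dots,\neg x,\dots)$-style symmetrizations, or cite the known classification of Boolean structures up to pp-constructibility. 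That classification gives exactly the classes: provable, $K_2$, $\mathbb{F}_2$, $2\sat$, and NP-hard.

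\textbf{Step 4 ($K_{K_2}\equiv\ac(2)$).} Given a family of pairs, form the graph that is a disjoint union of edges; a $2$-coloring gives a choice function. Conversely, given a graph all of whose finite subgraphs are bipartite, each connected component has exactly two $2$-colorings, because in a connected bipartite graph coloring one vertex determines the rest. Choosing one coloring per component via $\ac(2)$ colors the whole graph.

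The main obstacle is Step 3: the infinitely many Boolean clones near $2\sat$ and the affine clones are not pp-definable from each other, so collapsing them requires the coarser pp-constructibility relation and careful minion homomorphisms.
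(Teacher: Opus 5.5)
Your framework is the paper's: read $\pol(\s D)$ off Post's lattice, send clones with a constant, $\wedge$ or $\vee$ to the width-1 theorem, send Taylor-free clones to UFL, and match the rest with $2\sat$, $\bb F_2$ or $K_2$ up to pp-constructibility. Steps 1, 2 and 4 are fine. Treating $\ip{\neg}$ via Kat\'ay--T\'oth--Vidny\'anszky rather than via pp-equivalence with $3\sat$ is harmless, and your argument for $K_{K_2}\equiv\ac(2)$ is the standard one the paper cites.

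The gap is item 3 of Step 3 and your ``main obstacle'' paragraph. Both rest on a misreading of Post's lattice and propose a step that cannot be carried out. Under Step 3's standing hypothesis (no constant, no $\wedge$, no $\vee$), the only clone containing $\maj$ but not $x+y+z$ is $\ip{\maj}=\pol(2\sat)$ itself. So $\s D$ and $2\sat$ are pp-interdefinable and nothing remains to check. The other clones you put there do not belong:
\begin{itemize}
\item $\ip{\maj,\neg}$ is the clone of all self-dual operations. It contains $x+y+z$, equals $\pol(K_2)$, and belongs to item 1. It is \emph{not} pp-equivalent to $2\sat$. Every operation of $\ip{\maj}$ is monotone and idempotent, so none satisfies the height-1 condition $m(x,x,y)\approx m(x,y,x)\approx m(y,y,y)$: with idempotence this forces $m(0,0,1)=1>0=m(1,0,1)$, contradicting monotonicity. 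Since $x+y+z$ does satisfy it, there is no minion homomorphism $\pol(K_2)\to\ip{\maj}$. This matches the paper's result that $K_{K_2}$ does not prove $K_{2\sat}$.
\item A clone generated by $\maj$ and a threshold function $\mathrm{th}^n_k$ with $1<k<n$, other than an odd-arity majority, contains $x\vee y$ (if $2k\le n$) or $x\wedge y$ (if $2k\ge n+2$). You see this by identifying variables into two blocks. So such a clone was already handled in Step 2.
\item The infinite chains of Post's lattice do not lie above $\ip{\maj}$; only finitely many Boolean clones contain $\maj$. Each member of those chains contains $x\vee(y\wedge z)$ or $x\wedge(y\vee z)$, hence $\vee$ or $\wedge$.
\end{itemize}
So the minion homomorphisms you plan to exhibit do not exist, and they are not needed. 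These principles collapse via Step 2 or item 1, not via pp-equivalence with $2\sat$. For the same reason, the pp-constructibility classification of Boolean structures is strictly finer than the five-way split you attribute to it. The width-1 classes merge into $\top$ only through the width-1 theorem.

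Once this is corrected, the Boolean clones without a constant, $\wedge$ or $\vee$ are exactly the seven the paper lists: the projections and $\ip{\neg}$ (no Taylor operation), $\ip{\maj}$, $\ip{x+y+z}$, $\ip{x+y+z,\neg}$, $\ip{\maj,x+y+z}$ and $\ip{\maj,\neg}$. Beyond equality of clones, the only pp-constructions needed are the core-plus-singleton steps. These link $\pol(\bb L)=\ip{x+y+z,\neg}$ with $\bb F_2$, and $\pol(K_2)$ with $\pol(K_2,\{0\})=\ip{\maj,x+y+z}$.

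There are also two smaller slips. In item 1 the clone need not contain $\neg$: it is $\pol(K_2,\{0\})$ or $\pol(K_2)$, and $\pol(K_2)$ already contains $\neg$. In item 2, $\pol(\bb F_2)$ is $\ip{x+y+z}$ itself, so the upper bound should be the full affine clone. Its constant-free members containing $x+y+z$ are just $\ip{x+y+z}$ and $\ip{x+y+z,\neg}$.
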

Working in \zf, we have the following diagram of implications:

\begin{center} \begin{tikzcd}
    & & \arrow[dl] K_{2\sat} &\\
    K_{\varnothing} & \arrow[l] K_{K_2} & & \arrow[ul] \arrow[dl] \text{UFL}\\
    & & \arrow[ul] K_{\mathbb{F}_2} &
\end{tikzcd} \end{center}

If $\zf$ is consistent, then the arrow on the left and the arrows on the right cannot be reversed. Kat\'ay, T\'oth, and Vidny\'anszky asked in \cite[5.2, p. 17]{kátay2023cspdichotomyaxiomchoice} whether any of the two arrows in the middle can be reversed. We show that they cannot, and further that all of these principles are distinct:
\begin{theorem}
    If $\zf$ is consistent, then $\zf+K_{K_2}$ does not prove $K_{\mathbb{F}_2}$ or $K_{2\sat}$, and $\zf +K_{2\sat}$ does not prove $K_{\mathbb{F}_2}$.
\end{theorem}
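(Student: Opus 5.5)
We will build permutation models of $\zfa$ and then transfer the resulting independence to $\zf$ with the Jech--Sochor embedding theorem. This is legitimate because each $K_{\mathcal D}$, once we fix an instance $\mathcal X$, is a bounded statement about $\mathcal X$ and its powerset up to some fixed rank. Each separation gets its own model.

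The first model gives $\ac(2)$, hence $K_{K_2}$, while $K_{\mathbb F_2}$ fails. Take atoms $A=\{a_{n,i}: n\in\omega, i\in\mathbb F_2^{3}\}$, or more generally blocks $B_n$, each a copy of a finite $\mathbb F_2$-affine space of odd-order structure. The group $G$ acts on each block by translations of odd order, for instance an elementary abelian $3$-group or a cyclic group of order $3$ acting freely. Supports are finite. Since every element of $G$ has odd order, no permutation can swap two elements of a pair without fixing them both, because an involution is required for that. So every pair set has a symmetric choice, and $\ac(2)$ holds.

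To make $K_{\mathbb F_2}$ fail, we encode on each block a finite but satisfiable system of linear equations whose solutions the group permutes without fixed points. For example, let $\mathbb Z/3$ act on the three nonzero vectors of a $2$-dimensional $\mathbb F_2$-space $V_n$. The instance has variables indexed by the atoms, with affine constraints forcing the assignment to be a linear functional, or a point of an affine space that the group moves freely. Every finite subinstance is solvable in ZF, since it involves only finitely many blocks and we can choose freely there. A global solution, however, would be supported by finitely many blocks, and a block outside the support gets moved. We therefore need the orbit of a solution on a block to have no fixed point. We have to check this carefully: odd-order groups acting on $\mathbb F_2$-affine spaces do have fixed points when the orbit size is odd and the order is a $2$-power, so we instead let the solution set be a torsor of size $2^k$ under a group of order $3$. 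That is impossible, because $3\nmid 2^k$ forces a fixed point. Fixing this is the main obstacle. The cleanest route is the known Kat\'ay--T\'oth--Vidny\'anszky style model with $\mathbb Z/3$-torsors, using the fact that $K_{\mathbb F_2}$ implies $\ac$ for sets of size $2^k$ in a suitable form. Alternatively, since $\mathbb F_2$ pp-constructs systems whose solution sets are cosets, we can encode ``choose an element from each $4$-element set equipped with a Klein-four structure''. Then a group $A_4$ acting on each $4$-set gives no involution swapping a single pair freely, but also no symmetric choice from the $4$-set. We would check that $\ac(2)$ holds in the $A_4$ model, which requires care because double transpositions are involutions. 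We settle for verifying it directly on hereditarily symmetric pairs.

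The second model gives $K_{K_2}$ and $K_{2\sat}$ while $K_{\mathbb F_2}$ fails. We prove $K_{2\sat}$ in the model using the polymorphism $\maj$. Given a finitely satisfiable $2\sat$ instance, we take its implication graph. Finite satisfiability forces $x$ and $\neg x$ never to be mutually reachable. The variables forced by implication chains are determined canonically, and the free part splits into components. Choosing a consistent orientation on each strongly connected component pair is exactly a choice from $2$-element sets, so $\ac(2)$ together with this canonical structure gives $K_{2\sat}$. Since $K_{\mathbb F_2}$ fails in the same model as before, this model handles the third claim and also the first claim about $K_{\mathbb F_2}$.

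The third model gives $K_{K_2}$ while $K_{2\sat}$ fails. Here $2\sat$ needs choices from linear-order-like structures. The instance is a $\mathbb Z$-chain of implications on each block, with solutions being the cuts of an ordering. We use atoms forming countably many copies of $\mathbb Q$, or of $\mathbb Z$-chains, and let the automorphism group act by order-preserving maps, which have no involutions. Then $\ac(2)$ holds, since no element of the group swaps a pair. Meanwhile the instance ``choose a Dedekind cut in each copy'', which is $2\sat$-expressible by the implications $x_a\to x_b$ for $a<b$ together with unit constraints bounding the cut, has no symmetric solution. This third model is the step where we expect the real difficulty, because we must confirm both the failure of symmetric cuts and the persistence of $\ac(2)$ for all hereditarily symmetric pairs.
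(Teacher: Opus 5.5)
There is a genuine gap: none of your three models does its job, and the argument for the second would refute the statement you are proving.

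\textbf{First model.} The fixed point you ran into is not a local accident. Take any block model whose block groups have odd order, and let $\mathcal X$ be a symmetric $\mathbb F_2$-instance with support $E$. Then $\fix(E)$ acts on each finite union $Y$ of its (finite) orbits through a finite group $Q$ of odd order. For any solution $s$ of the induced subinstance on $Y$, the sum $\sum_{q\in Q}q\cdot s$ is an affine combination of an odd number of solutions. Hence it is a $Q$-invariant solution. So the orbit-quotient instance $\mathcal X/\fix(E)$ is finitely satisfiable, compactness in the ground model solves it, and its lift is a symmetric solution. Thus $K_{\mathbb F_2}$ holds in every such model, including any $\mathbb Z/3$-torsor model; an odd-arity majority gives $K_{2\sat}$ there too.

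Your $A_4$ fallback instead destroys $\ac(2)$. The sets $\{\{a,b\},\{c,d\}\}$ arising from splittings of a block into two pairs form a symmetric family of $2$-element sets. On a block outside a given support, the double transposition $(a\,c)(b\,d)$ stabilizes such a set while swapping its two members, so no symmetric choice function exists.

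\textbf{Second model.} You argue that $\ac(2)$ plus the canonical structure of the implication graph yields $K_{2\sat}$. That argument uses nothing about the model, so it would prove $K_{K_2}\vdash K_{2\sat}$, contradicting the first clause. The orientations of different complementary component pairs are linked by implications and cannot be chosen independently. One needs a consistent linear extension of the implication preorder, which is why Rorabaugh, Tardif, and Wehlau derive $K_{2\sat}$ from the Order Extension Principle.

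\textbf{Third model.} Your cut instance has the constant assignments as symmetric solutions. On a copy of $\mathbb Q$ outside the support the group acts transitively, so any unit constraint there spreads to the whole copy. Unit constraints on chosen atoms in infinitely many copies are not symmetric. Worse, the prototype of such models, Mostowski's ordered model, satisfies the prime ideal theorem (Halpern), hence every $K_{\mathcal D}$.

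The missing idea is the choice of structure. The paper takes $G=\aut(\mathbb B,\le)$, where $(\mathbb B,\le)$ is the Fra\"iss\'e limit of finite Boolean algebras with an arbitrary linear order having $0$ least and $1$ greatest.
\begin{itemize}
\item The order on the atoms, together with weak elimination of imaginaries $\fix(\mathbb A\cap\mathbb C)=\ip{\fix(\mathbb A),\fix(\mathbb C)}$, gives least supports. So every set is linearly orderable and $K_{K_2}$ holds.
\item The orbits are infinite, so no finite-quotient averaging of the kind above is available.
\item The Boolean structure supplies instances that ultrafilters solve in the ground model: the $2\sat$ instance on $\mathbb B$ ($x\sqsubseteq y$ gives $x\to y$, and $x\neq x^c$) and the symmetric-difference $\mathbb F_2$ instance. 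Neither has a symmetric solution, shown by exhibiting a four-piece partition of $1$ whose pieces and complements are related by elements of $\fix(\mathbb E)$.
\end{itemize}
For the last clause the paper uses the Felgner--Truss model, where the generic order extends $\sqsubseteq$. There OEP, and hence $K_{2\sat}$, holds, while the same $\mathbb F_2$ instance still has no symmetric solution.

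Finally, your transfer remark covers only the failures. $K_{K_2}$ and $K_{2\sat}$ are not bounded statements. The paper needs the least-supports clause of the embedding theorem for $K_{K_2}$, and Felgner and Truss's own transfer of OEP for $K_{2\sat}$, to carry these into the $\zf$ model.
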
 The only question remaining for Boolean structures is whether $K_{\bb F_2}$ is strictly stronger than $K_{2\sat}$.

Finally, we construct an infinite chain and an infinite anti-chain of compactness principles related to directed cycles and finite choice. Write $C_n$ for the directed cycle of length $n$ (see \ref{sec: notation} for a precise definition). For anti-chains, we have a very strong sense of incomparability:
\begin{theorem}[Con(\zf)]
   For each prime $p$ there is a model $M_p$ of \zf+$\neg K_{C_p}$ such that whenever $q$ is a prime distinct from p, $M_p\models K_{C_q}$.
\end{theorem}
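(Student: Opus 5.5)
We will build $M_p$ as a permutation (Fraenkel–Mostowski) model of $\zfa$ and then transfer it to $\zf$ via the Jech–Sochor embedding theorem. This is legitimate because both $\neg K_{C_p}$ and $K_{C_q}$ are boundable statements: they only concern instances and homomorphisms of bounded rank over the atoms.

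\textbf{The model.} Take a countable set of atoms $A=\bigcup_{n\in\omega} P_n$, where each $P_n=\{a_{n,0},\dots,a_{n,p-1}\}$ has size $p$. Let $G=\prod_n \bb Z/p\bb Z$ act on $A$ by rotating each $P_n$ cyclically, $a_{n,i}\mapsto a_{n,i+g_n}$. Use finite supports, so $M_p$ is the usual ``pairs''-style model adapted to $p$-cycles.

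\textbf{Failure of $K_{C_p}$.} The instance $\s X$ is the disjoint union of the directed $p$-cycles $a_{n,0}\to a_{n,1}\to\cdots\to a_{n,p-1}\to a_{n,0}$. Its edge relation is fixed by all of $G$, so $\s X\in M_p$. Every finite substructure maps to $C_p$, since we may choose a rotation on each of finitely many cycles. A homomorphism $f:\s X\to C_p$ restricted to $P_n$ is a bijection that commutes with rotation. Suppose $f$ had finite support $E$, and pick $n$ with $P_n\cap E=\varnothing$. The rotation $g$ of $P_n$ alone fixes $f$, yet it changes $f$ on $P_n$ by a nonzero shift. This contradiction shows $\s X$ has no homomorphism to $C_p$ in $M_p$.

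\textbf{$K_{C_q}$ holds for $q\ne p$.} Let $\s X\in M_p$ be an instance of $C_q$ with support $E$ (a finite union of $P_n$'s), and suppose every finite substructure maps to $C_q$. Let $H=\fix(E)\le G$; $H$ acts on $\s X$ by automorphisms. First, the set of homomorphisms $\s X\to C_q$ in the ambient $\zfa$ universe (which has $\ac$) is nonempty by compactness. Next, the plan is to find an $H$-invariant homomorphism. Every element of $\s X$ has a finite support, so each $H$-orbit on $\s X$ is the orbit of a finite quotient group $\prod_{n\in F}\bb Z/p$, a $p$-group. The key point is that a $p$-group acting on a $C_q$-coloring of a finite $H$-invariant piece gives an action on the finite set of solutions, which are maps into $C_q$.

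It is cleaner to use that a homomorphism to $C_q$ is a ``potential'' modulo $q$. On each weakly connected component, the homomorphisms form a torsor for $\bb Z/q$, translation by constants, provided the component is nonempty and solvable. An automorphism $h$ of order a power of $p$ that stabilizes a component acts on this $\bb Z/q$-torsor by an affine map $x\mapsto x+c$ of order dividing a power of $p$. Since $\gcd(p,q)=1$, this forces $c=0$, so every solution on a stabilized component is $h$-invariant. Components not stabilized are permuted, and we pick solutions on one component per $H$-orbit and transport them along the orbit. Transport is well defined because the stabilizer fixes every solution there.

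The \textbf{main obstacle} is uniformity: the choices of one solution per orbit of components must themselves be made definably, or else with finitely many choices that enlarge the support only finitely. The plan is to instead choose, for each component, the solution fixed by the full stabilizer, which we have just shown to be every solution of that component. This is still a genuine choice of a $\bb Z/q$-coset element. To resolve it, we will argue that components come in $H$-orbits each determined by finitely many atoms. Then, using the infinite-product structure of $G$, we try to show that each component meets only atoms from $E$ together with finitely many $P_n$, and that fixing a normal form (for instance, via the well-orderable set of $H$-orbits, since $M_p$ has the orbit indexing by finite subsets of $\omega$) lets us pick a representative value canonically. If this canonical choice fails, the fallback is to show directly that in $M_p$ every family of nonempty finite sets whose sizes are powers of $q$ and which carry $p$-group actions has a choice function. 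This is the finite-choice phenomenon (``$\ac(q)$ holds while $\ac(p)$ fails'') that the chain/anti-chain results are built around, and it would reduce $K_{C_q}$ to that choice principle.
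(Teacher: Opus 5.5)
\textbf{The transfer to \zf{} fails as stated.} $K_{C_q}$ is not boundable. It quantifies over all instances, of arbitrary rank. The Jech--Sochor theorem only makes the \zf{} model agree with the permutation model on $\mathcal{P}^\alpha(A)$ for one fixed $\alpha$. The \zf{} model contains many sets outside the image of the embedding, and nothing in your argument prevents some of them from forming a disjoint union of $q$-cycles with no homomorphism to $C_q$. Only $\neg K_{C_p}$, an existential statement with a witness of bounded rank, transfers this way. The paper explicitly notes that its directed-cycle results do not pass through the embedding theorems.

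The paper proves the \zf{} statement by a different route. By Truss's criterion there is a model of $\zf+\bigwedge_{p\nmid n}\ac(n)+\neg\ac(p)$. There $K_{C_q}$ holds for each prime $q\neq p$, because $\ac(q)$ lets you pick one of the $q$ solutions on each component. $K_{C_p}$ fails, because the argument of Theorem~\ref{thm: finite choice} yields $K_{C_p}+\bigwedge_{n<p}\ac(n)\vdash\ac(p)$. Your proposal needs an ingredient of this kind, or a transfer theorem covering suitable non-boundable statements together with a check that it applies.

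\textbf{Inside \zfa{}, your argument is the paper's and is already complete.} Your model, your counterexample to $K_{C_p}$, and your key observation coincide with Theorem~\ref{thm: antichain}. Every element of $G$ has order $1$ or $p$, so it acts on the $\bb Z/q$-torsor of solutions of a component it stabilizes by a translation $c$ with $pc=0$, forcing $c=0$. Your argument was finished once you noted that transport is well defined; the ``main obstacle'' you then raise is illusory. In a permutation-model argument, choices are made in the ambient $\zfa+\ac$ universe, and all that matters is that the resulting object is hereditarily symmetric. Choose there one solution on one component from each $H$-orbit and transport it. This gives an $H$-invariant $f$, and since $H=\fix(E)$ and $f$ consists of pairs of hereditarily symmetric sets, $f\in M_p$.

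The paper phrases this as pulling back a ground-model coloring of the quotient $V/\sym(\s G)$, which is again a union of $q$-cycles. Your torsor formulation has the small advantage of not needing the preliminary reduction to disjoint unions of $q$-cycles. The canonical-choice plans you sketch are unnecessary, and the first would fail anyway: a component can meet infinitely many $P_n$, e.g.\ all atoms with edges $a_{n,j}\to a_{n+1,k}$.
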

So for each $p$, we can fix a single model $M_p$ witnessing the incomparability of $K_{C_p}$ with all other $K_{C_q}$. By taking the disjoint union of directed cycles, we can then form structures whose compactness principles have increasing implication strength:
\begin{definition}
    For $n\geq 2$, $\mathcal{D}_n$ is the constraint library $(D,R,U_2,...,U_n)$ with domain $D=\{c^2_0,c^2_1,...,c^n_0,...,c^n_{n-1}\}$, a binary relation $R=\{(c^{i}_{j}, c^{i}_{j+1})|2\leq i\leq n,j\in\mathbb{Z}/i\mathbb{Z}\}$, and $n-1$ unary relations $U_k=\{c^k_j:j\in\mathbb{Z}/i\mathbb{Z}\}$.
\end{definition}
So the $c^i_j $’s are vertices of a digraph, and $R$ is the edge relation, and the $Ui_i$’s are the auxiliary predicates for controlling which cycle a component of an instance can map into.
\begin{definition}
    For $n<\omega$, $\ac(n)$ is the statement that every family of sets of size at most $n$ has a choice function. 
\end{definition}
\begin{lemma}
    The following are equivalent over \zf:
    \begin{enumerate}
        \item $K_{\mathcal{D}_n}$
        \item $\ac(n)$
        \item $\bigwedge_{i=2}^nK_{C_p}$
    \end{enumerate}
\end{lemma}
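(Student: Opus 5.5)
The plan is to prove the cycle of implications $(1)\Rightarrow(3)\Rightarrow(2)\Rightarrow(1)$. I read item (3) as $\bigwedge_{i=2}^n K_{C_i}$, with $i$ ranging over all integers $2\le i\le n$.

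\emph{$(1)\Rightarrow(3)$.} For each $2\le i\le n$, $C_i$ is pp-definable in $\mathcal D_n$: take the domain $U_i$ and the relation $R\cap U_i^2$. Then the observation of Kat\'ay, T\'oth and Vidny\'anszky that $\mathcal D\le_{\text{pp}}\mathcal E$ gives $K_{\mathcal E}\vdash K_{\mathcal D}$ yields $K_{\mathcal D_n}\Rightarrow K_{C_i}$ for every $i$.

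\emph{$(2)\Rightarrow(1)$.} Let $\mathcal X$ be a finitely satisfiable instance of $\mathcal D_n$, and split it into connected components of its $R$-graph. No choice is needed for this. Fix a component $K$.
\begin{itemize}
\item A map $K\to C_i$ is a homomorphism exactly when every closed walk in $K$ has signed length $\equiv 0 \pmod i$, since an edge $x\,R\,y$ forces $h(y)=h(x)+1$.
\item Let $S_K$ be the set of $i\le n$ such that every finite substructure of $K$ maps into the $i$-th cycle while respecting the unary predicates.
\item Suppose $S_K=\varnothing$. Each $i\le n$ has a finite witness of failure, and there are only finitely many $i$, so the union of these witnesses is a finite substructure with no homomorphism. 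This contradicts finite satisfiability, so $S_K\neq\varnothing$.
\item Put $i_K=\min S_K$. This choice is canonical.
\item Fix a vertex $v$ of $K$ and declare $h(v)=0$. The closed-walk condition is finitary, so this propagates to a well-defined homomorphism. Hence the set $H_K$ of homomorphisms $K\to C_{i_K}$ respecting the $U$'s is nonempty. Asserting existence does not require choosing $v$ uniformly.
\item $H_K$ is a $\mathbb Z/i_K$-torsor, so $|H_K|=i_K\le n$.
\end{itemize}
Applying $\ac(n)$ to the family $\{H_K\}_K$ and taking the union of the chosen homomorphisms gives $\mathcal X\to\mathcal D_n$. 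Isolated vertices are handled the same way.

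\emph{$(3)\Rightarrow(2)$.} First reduce to sets of a fixed size: a family of sets of size at most $n$ splits definably into subfamilies of sets of size exactly $k$, for $k\le n$, and choice functions for these can be glued. For a family $\{A\}$ of $k$-sets, build an instance of $C_k$ as follows.
\begin{itemize}
\item Take as vertices the bijections $f:\mathbb Z/k\to A$, with edges $f\to f\circ\sigma$, where $\sigma$ is the shift.
\item Every component is a directed $k$-cycle, so every finite substructure maps to $C_k$.
\item $K_{C_k}$ then gives a homomorphism $h$ with $h(f\circ\sigma)=h(f)+1$. This is an equivariant labelling that picks, canonically in each shift-orbit of enumerations, one enumeration with $h(f)=0$.
\end{itemize}

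From $h$ we must extract an element of each $A$. I would argue by induction on $k$, assuming $\ac(j)$ for $j<k$ is already available from the $K_{C_j}$. For $a\in A$, consider the statistic $a\mapsto |\{f: h(f)=0,\ f(0)=a\}|$. If this is not constant on $A$, it partitions $A$ definably into nonempty proper pieces, and applying $\ac(j)$ to a piece of smallest size yields a choice. This extraction is the step I expect to be the main obstacle. The difficulty is the case where $h$ happens to be ``balanced'' and every such statistic is constant. In that case I would first try replacing the vertex set by a more rigid structure, for instance only enumerations compatible with a fixed partition already obtained, or $k$-tuples of pairs, so that balanced labellings are impossible.
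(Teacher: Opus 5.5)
Your $(1)\Rightarrow(3)$ is the paper's argument. Your $(2)\Rightarrow(1)$ is the paper's argument in more detail, and it needs one small repair. The union of the failure witnesses $W_i$ need not be unsatisfiable, because different pieces could be sent to different cycles of $\mathcal D_n$. Add finitely many connecting paths inside $K$ so the finite substructure is weakly connected. Then any solution lands in a single cycle $C_i$ and restricts to a solution of $W_i$, a contradiction.

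The genuine gap is in $(3)\Rightarrow(2)$: you never actually extract an element of $A$ from $h$, and the balanced case you flag really occurs. Each shift-orbit contains exactly one $f$ with $h(f)=0$, so $\sum_{a\in A}N_A(a)=(k-1)!$. For every composite $k\ge 6$ we have $k\mid (k-1)!$, and for $k=6$ one can easily choose $h$ with $N_A\equiv 20$. Since $K_{C_k}$ only hands you \emph{some} $h$, nothing excludes a balanced one, and ``try a more rigid vertex set'' is not an argument.

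\textbf{Closing the gap.} For $k$ prime your statistic does work. By Wilson's theorem $k\nmid (k-1)!$, so $N_A$ is non-constant. Then $\{a\in A: N_A(a)\text{ is minimal}\}$ is a nonempty proper subset of $A$, defined uniformly in $A$, and $\ac(k-1)$ picks from it.

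For composite $k$ you do not need $K_{C_k}$ at all.
\begin{enumerate}
\item Fix a prime $p\mid k$ and apply $\ac(p)$ to all $p$-element subsets of members of the family.
\item Let $M_A(a)$ count the $p$-subsets of $A$ whose chosen point is $a$. Then $\sum_a M_A(a)=\binom{k}{p}=\frac{k}{p}\binom{k-1}{p-1}$.
\item Since $\binom{k-1}{p-1}(p-1)!=\prod_{j=1}^{p-1}(k-j)$ and $p\nmid k-j$ for $1\le j<p$, we get $p\nmid\binom{k-1}{p-1}$. Hence $k\nmid\binom{k}{p}$.
\item So $M_A$ is non-constant, and the same minimal-fiber trick applies.
\end{enumerate}

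The paper avoids this case split with a single device. Apply $\ac(k-1)$ to the sets $S\setminus\{a\}$ and let each $a$ point to $c(S\setminus\{a\})$. The resulting functional digraph $G_S$ falls into one of two cases:
\begin{enumerate}
\item It is disconnected; then choose a component and then a vertex, both by $\ac(k-1)$.
\item It is connected and homomorphic to some $C_i$ with $i\le k$. Then one homomorphism $\bigcup_S G_S\to C_i$ given by $K_{C_i}$ has a fiber meeting each such $S$ in a nonempty proper subset, because $G_S$ has an edge.
\end{enumerate}

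With either repair your induction goes through. Your repaired version additionally shows that only the principles $K_{C_p}$ with $p$ prime are actually used.
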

\begin{cor}
    The principles $K_{\s D_p}$ for $p$ a prime form an infinite chain in the provability order.
\end{cor}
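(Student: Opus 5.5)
The corollary should follow by combining the preceding Lemma (equivalence of $K_{\s D_n}$, $\ac(n)$ and $\bigwedge_{i\le n} K_{C_i}$) with the anti-chain Theorem providing the models $M_p$. First I fix the meaning of ``infinite chain'': for primes $p<q$ we want $\zf+K_{\s D_q}\vdash K_{\s D_p}$ but, assuming $\zf$ is consistent, $\zf+K_{\s D_p}\nvdash K_{\s D_q}$.

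The implication is immediate from the Lemma. For $p<q$, $K_{\s D_q}$ is equivalent to $\ac(q)$, and $\ac(q)$ trivially implies $\ac(p)$: a family of sets of size at most $p$ is also a family of sets of size at most $q$. By the Lemma again, $\ac(p)$ gives $K_{\s D_p}$. Alternatively, one can argue via the conjunctions, since $\bigwedge_{i=2}^q K_{C_i}$ contains every conjunct of $\bigwedge_{i=2}^p K_{C_i}$.

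For strictness, I take the model $M_q$ from the anti-chain Theorem, so $M_q\models\neg K_{C_q}$, and hence $M_q\models\neg K_{\s D_q}$ by the Lemma. I then need $M_q\models K_{\s D_p}$, which by the Lemma means $M_q\models K_{C_i}$ for every $2\le i\le p$. For prime $i$ distinct from $q$ this is exactly what the Theorem gives. For composite $i$ the Theorem says nothing directly, and this is the main obstacle. I see two ways to handle it.

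\emph{First route.} Show in $\zf$ that $K_{C_n}$ follows from $\bigwedge_{r\mid n} K_{C_r}$ over the primes $r$ dividing $n$. The idea is to exhibit $C_n$ as pp-constructible from $C_r$ and $C_{n/r}$: a homomorphism to $C_n$ amounts to a compatible pair of homomorphisms to $C_r$ and $C_{n/r}$ when the factors are coprime, and more generally one peels off the factors one prime at a time. I would then invoke the fact that pp-constructibility transfers compactness.

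\emph{Second route.} Avoid the issue entirely by recalling that $\ac(n)$ is equivalent to the conjunction of $\ac(r)$ over primes $r\le n$, as in the classical Mostowski-style results. Combined with the Lemma's equivalence $\ac(n)\leftrightarrow K_{\s D_n}$ and with $K_{C_r}\leftrightarrow\ac(r)$-type statements for primes $r$, this reduces everything to prime cycles, which the Theorem controls.

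I would attempt the first route and fall back on the second, checking carefully that the anti-chain model $M_q$ really satisfies $K_{C_i}$ for all $i\le p$.
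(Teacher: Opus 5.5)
Your strategy is the paper's: the implication comes from $K_{\s D_n}\leftrightarrow \ac(n)\leftrightarrow\bigwedge_{i\le n}K_{C_i}$, and strictness comes from the anti-chain model $M_q$, which fails $K_{C_q}$. You are also right that the composite indices $i\le p$ need an argument; the paper's one-line derivation passes over this. The quickest fix needs no factoring at all. Look at the proof that $M_q\models K_{C_{q'}}$: the only property of $q'$ it uses is $\gcd(q,q')=1$. A symmetry mapping a $q'$-cycle of the instance to itself acts on it as a rotation whose order divides both $q$ and $q'$. Also, the reduction of $K_{C_n}$ to disjoint unions of $n$-cycles is stated for every $n\ge 2$. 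Hence $M_q\models K_{C_i}$ for every $i$ coprime to $q$, in particular for all $2\le i\le p<q$.

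Your first route is sound in substance, but the mechanism you give only covers coprime factors, via $C_a\times C_b\cong C_{ab}$. For $C_4, C_8, C_9,\dots$, ``peeling off one prime at a time'' is exactly the step that needs proof: a homomorphism to $C_{r^2}$ is not a pair of homomorphisms to $C_r$. A direct argument works for all $a,b$. Take a graph of disjoint $ab$-cycles and use $K_{C_a}$ to get $h$ into $C_a$. In each cycle, the fibre $h^{-1}(0)$ has $b$ vertices and forms a directed $b$-cycle under ``move $a$ steps forward''. Apply $K_{C_b}$ to this new graph to single out one vertex per cycle, which fixes a homomorphism to $C_{ab}$. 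This is a plain compactness argument. Phrasing it as ``pp-constructible from $C_r$ and $C_{n/r}$'' does not match the transfer theorem, which concerns one structure pp-constructing another.

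Your second route has a false step: $K_{C_r}$ is not equivalent to $\ac(r)$ for primes $r\ge 3$. In $M_2$, $K_{C_3}$ holds but $K_{C_2}\equiv\ac(2)$ fails, so $\ac(3)$ fails. The correct relation, from the proof of (3)$\Rightarrow$(2), is $\ac(r)\leftrightarrow \ac(r-1)\wedge K_{C_r}$. To conclude you would still need $\ac(m-1)\to\ac(m)$ for composite $m$, which is essentially route one again, so this route does not avoid the issue.

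Finally, $M_q$ is a permutation model, so it only gives non-provability over $\zfa$, which is how the paper states the chain. For the $\zf$ statement you set as your goal, use the appendix argument. Truss's criterion gives a $\zf$ model in which every family of sets with sizes not divisible by $q$ has a choice function, but some family of $q$-element sets does not. That model satisfies $\ac(p)$, i.e.\ $K_{\s D_p}$, but not $K_{\s D_q}$.
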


For ease of exposition, the main body of the paper will work under ZFA (see Section \ref{sec: Tools} for definitions). We translate our proofs to work under $\zf$ via standard forcing argument in Appendix \ref{appendix: zf}.

\subsection{Acknowledgements} The authors thank Claude Tardif and Zoltan Vidny\'anszky for helpful conversations about the topic of this paper. Much of this research was
carried out as part of Carnegie Mellon University’s SEMS program; we also thank
Irina Gheorghiciuc for organizing the program. The third author was supported by
NSF MSPRF grant DMS-2202827

\subsection{Notation} \label{sec: notation}
A \textbf{relational structure} $\mathcal{X}$ is a tuple $(X,R_1^{\mathcal{X}},\dots,R_n^{\mathcal{X}},...)$ where $X$ is a set and the $R_i$ are relations on $X$. We say $X$ is the \textbf{domain} of $\mathcal{X}$. For all $1\leq i\leq n$, let $a_i$ be the arity of $R_i$. The \textbf{signature} of $\mathcal{X}$ is the tuple $(a_1,\dots,a_n,...)$. We will only be concerned with finite arity relations and thus assume all arities $a_i$ are finite going forward. When $X$ is finite and there are only finitely many relations, we also call this a \textbf{constraint library} and usually denote it by $\mathcal{D}=(D,R_1,\dots,R_n)$. An \textbf{instance} of a constraint library $\mathcal{D}=(D,R_1,\dots,R_n)$ is a relational structure $\mathcal{X}=(X,R_1^{\mathcal{X}},\dots,R_n^{\mathcal{X}})$ such that for all $1\leq i\leq n$, $R_i^{\mathcal{X}}$ is a relation on $X$ with the same arity as $R_i$. In other words, it is a relational structure with the same signature as some given constraint library.

Given a set $\mathcal{F}$ of function symbols, a \textbf{term} is either a variable symbol $x_i$ or of the form $f(t_1,...,t_n)$ where $f\in\mathcal{F}$ has arity $n$ and each $t_n$ is a term. The \textbf{height} of a term is the depth of the recursion on term complexity. A formula is a height $n$ identity if it is of the form $t_1=t_2$ where $t_1,t_2$ has height $n$. For example, $f(x,y,y)=f(x,x,z)$ is a height one identity, but $f(x,y)=x$ is not.

Given two relational structures $\mathcal{X}=(X,R_1^{\mathcal{X}},\dots,R_n^{\mathcal{X}})$ and $\mathcal{Y}=(Y,R_1^{\mathcal{Y}},\dots,R_n^{\mathcal{Y}})$ with the same signature, a \textbf{homomorphism} from $\mathcal{X}$ to $\mathcal{Y}$ is a function $f:X\to Y$ such that for all $1\leq i\leq n$ and all $(x_1,\dots,x_{a_i})\in R_i^{\mathcal{X}}$, $(f(x_1),\dots,f(x_{a_i}))\in R_i^{\mathcal{Y}}$. A \textbf{solution} to an instance $\mathcal{X}$ of $\mathcal{D}$ is a homomorphism from $\mathcal{X}$ to $\mathcal{D}$. $\text{CSP}(\mathcal{D})$ is the problem of determining whether there is a solution to a given instance of $\mathcal{D}$.

Let $\mathcal{X}=(X,R_1,\dots,R_n)$ be a relational structure with signature $(a_1,\dots,a_n)$ and $A\subseteq X$. The \textbf{induced substructure} on $A$ is the relational structure $(A,R_1\cap A^{a_i},\dots,R_n\cap A^{a_n})$. A \textbf{finite substructure} of $\mathcal{X}$ is the induced substructure on some finite subset of the domain of $\mathcal{X}$. Note that if we are thinking of $\mathcal{X}$ as an instance of some constraint library, then every induced substructure is an instance of the same constraint library.\par

When $\mathcal{D}=(D,R_1,\dots,R_n)$ is a constraint library, the \textbf{compactness principle} associated to $\mathcal{D}$, denoted as $K_{\mathcal{D}}$, is the statement: for every instance $\mathcal{X}$ of $\mathcal{D}$, if every finite substructure of $\mathcal{X}$ admits a solution (as instances of $\mathcal{D}$), then $\mathcal{X}$ admits a solution.

Note that $K_{\mathcal{D}}$ follows from ZFC. In this paper we study these compactness principles under weaker forms of choice. 

Let $\mathcal{X}_1,\dots,\mathcal{X}_n$ be relational structures all with the same signature $(a_1,\dots,a_n)$, and set $\mathcal{X}_i=(X_i,R_1^i,\dots,R_n^i)$. The product $\prod_{i=1}^n\mathcal{X}_i$ of these relational structures is another relational structure $\mathcal{X}=(X,R_1,\dots,R_n)$ with the same signature called the \textbf{categorical product} where
\begin{enumerate}
    \item the domain $X$ is the product $\prod_{i=1}^nX_i$ of the domains of the $\mathcal{X}_i$;
    \item $((x_{11},\dots,x_{1n}),\dots,(x_{a_i1},\dots,x_{a_in}))\in R_i\Leftrightarrow\forall1\leq j\leq n$, $(x_{1j},\dots,x_{a_ij})\in R_i^j$.
\end{enumerate} In other words, a relation in the categorical product holds if and only if the relation holds for each coordinate. 

The strength of $K_{\mathcal{D}}$ is controlled by the polymorphism algebra of $\mathcal{D}$, which we are now ready to define:
\begin{definition}
    A \textbf{polymorphism} $f$ of $\mathcal{X}$ is a homomorphism $\mathcal{X}^n\to\mathcal{X}$ for some $n$, where we let $\mathcal{X}^n$ denote $\Pi^n_{i=1}\mathcal{X}$. The set of all polymorphisms for a constraint library $\mathcal{D}$ is denoted by $\pol(\mathcal{D})$.
\end{definition}

A handful of structures will be important throughout this article.

\begin{definition}
    For each $n$, $K_n$ is the complete graph on $n$ vertices:
    \[K_n=(\{0,..., n-1\}, \not=).\] And, $C_n$ is the directed $n$-cycle:
    \[C_n=(\bb Z/n\bb Z, \{(x,x+1):x\in \bb Z/n\bb Z\}).\]
    
    We write  $\mathbb{F}_2$ for the structure whose domain is $\mathbb{Z}/2\mathbb{Z}$ and whose relations are all affine relations on $\bb Z/2\bb Z$. We write $3\text{LIN}2$ for the reduct of $\mathbb{F}_2$ to ternary relations.

    Let $2\sat$ be the constraint library $(\{0,1\},R_{\rightarrow},R_{\neg}, True)$ where
    \begin{enumerate}
        \item $R_{\neg}(x,y)\;:\lra\; x\not=y$ (or equivalently $x=1-y$) 
        \item $R_{\rightarrow}(x,y)\;:\lra x\leq y$
        \item $True$ is of arity $1$ and $True=\{1\}$.
    \end{enumerate}

    For $k\geq 2$, we let $k\sat$ be the structure on $\{0,1\}$ equipped with all $k$-ary relations.
\end{definition} 

Note that $K_2$ is the same as the directed $2$-cycle. The structures $\bb F_2$ and $3\text{LIN}2$ turn out to have equivalent compactness principles. The theoretical advantage of $3\text{LIN}2$ is that it has only finitely many relations. Also, we have a chosen a convenient relational basis for $2\sat$ (we could have done so for $k\sat$ generally, but we do not need to). We will justify this choice in the next section. 

\subsection{Tools} \label{sec: Tools}
As in K{\'a}tay, T{\'o}th, and Vidny{\'a}nszky \cite{kátay2023cspdichotomyaxiomchoice}, the main tools we use come from the theory of pp-construction and the theory of symmetric models. 

\begin{definition}
    A structure $\mathcal{D}$ = $(D, R_1, ..., R_n)$ \textbf{positive primitive defines} or
\textbf{pp-defines} a relation $R$ of arity $a$ if we have
\[R(x_1, ..., x_a) \;\Longleftrightarrow\; (\exists z_1, ..., z_b)
\bigwedge^{m}_{i=1}
\alpha_i(x_1, ..., x_a, z_1, ..., z_b),
\] where each $\alpha_i$
is of the form $R_{\ell}(\pi_S(x_1, ..., x_a, z_1, ..., z_b))$ for some $\ell$, $S$, or of the form
$x_{\ell} = z_{\ell}'$. We call such an expression a \textbf{pp-definition} of $R$ over $\mathcal{D}$. Say that $\mathcal{D}$ pp-defines a structure $\mathcal{E}$ if $\mathcal{E}$ has the same domain as $\mathcal{D}$ and $\s D$ pp-defines every
relation of $\mathcal{E}$.
\end{definition}
In words, $\mathcal{D}$ pp-defines $\mathcal{E}$ if we can write any relation in $\mathcal{E}$ as a conjunction of equalities and relations from $\mathcal{D}$ among the inputs and some dummy variables. For example, $3\text{LIN}2$ pp-defines $\bb F_2$:
\[x_1+x_2+x_3+x_4=1 \;\lra\; (\exists z)\;z+x_1+x_2=0 \wedge z+x_3+x_4=1.\] And, $2\sat$ pp-defines every binary relation on $\{0,1\}$.

Given a pp-definition of $\s E$ in $\s D$, we can translate instances of $\mathcal{E}$ into instances of $\mathcal{D}$ in a uniform way by replacing the constraints of $\mathcal{E}$ with gadgets of constraints of $\mathcal{D}$, so that a solution to the latter is exactly equivalent to a solution to the former.

When working with constraints on possibly different domains, we can reduce $\mathcal{E}$-instances into $\mathcal{D}$-instances if the domain and the relations of $\mathcal{E}$ are pp-definable over powers of $\mathcal{D}$ modulo a pp-definable equivalence; in this case, we can use tuples of variables instead of just variables to build our gadgets. More formally,
\begin{definition}
     A \textbf{pp-definable substructure} of $\s D$ is the induced substructure on some pp-definable subset of the domain. We say $\mathcal{D}$ \textbf{pp-interprets} $\mathcal{E}$ if there is $n$ such that $\mathcal{E}$ is a quotient of a pp-definable substructure of $\mathcal{D}^n$ by a pp-definable equivalence relation.
\end{definition}
Homomorphisms also allow us to translate problems with different domains, which motivates the following definition:
\begin{definition}
    We say that $\mathcal{D}$ and $\mathcal{E}$ are hom-equivalent if there are homomorphisms $f : \mathcal{D} \rightarrow \mathcal{E}$ and $g : \mathcal{E} \rightarrow \mathcal{D}$. A constraint library $\mathcal{D}$ is a \textbf{core} if every homomorphism $f : \mathcal{D} \rightarrow \mathcal{D}$ is an automorphism.
\end{definition}
If $\s D$ is a core, then it turns that out $\s D$ pp-defines the orbits of each of its elements under automorphisms. Thus, any instance of a single unary expansion of $\s D$ can also be coded as an instance of $\s D$. We have many reasonable ways for $\mathcal{D}$ to simulate $\mathcal{E}$. The definition of pp-construction collects these.
\begin{definition}
    $\mathcal{D}$ \textbf{pp-constructs} $\mathcal{E}$ if there is a sequence of structures $\mathcal{E}_0, \mathcal{E}_1, ..., \mathcal{E}_n$
so that $\mathcal{D} = \mathcal{E}_0, \mathcal{E} = \mathcal{E}_n$ and for every $i$ one of the following holds:
\begin{enumerate}
    \item $E_{i+1}$ is pp-interpretable in $E_i$
    \item $E_i$ is homomorphically equivalent to $E_{i+1}$
    \item $E_i$ is a core and $E_{i+1}$ is $E_i$ expanded by a singleton unary relation $U(x) \Longleftrightarrow x = a$
\end{enumerate}
In such scenario, we write $\mathcal{E}\leq_{\text{pp}}\mathcal{D}$. If $\mathcal{E}\leq_{\text{pp}}\mathcal{D}$ and $\mathcal{D}\leq_{\text{pp}}\mathcal{E}$, we way that $\s D$ and $\s E$ are pp-equivalent and write $\s D\equiv_{\text{pp}}\s E$.
\end{definition}

The definition of pp-construction is useful for our study of compactness principles. Kat\'ay, T\'oth, and Vidny\'anszky used pp-definitions to prove implications among the $K_{\s D}$:
\begin{theorem}[\cite{kátay2023cspdichotomyaxiomchoice}]
\label{tool3}
    If $\mathcal{D}$ pp-constructs $\mathcal{E}$, then $K_{\mathcal{D}}$ implies $K_{\mathcal{E}}$ over \zf.
\end{theorem}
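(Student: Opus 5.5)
The plan is to prove this by induction on the length of the pp-construction sequence $\mathcal{D}=\mathcal{E}_0,\dots,\mathcal{E}_n=\mathcal{E}$. It then suffices to handle a single step: for each of the three kinds of step (pp-interpretation, homomorphic equivalence, adding a singleton to a core), show in \zf{} that $K_{\mathcal{E}_i}$ implies $K_{\mathcal{E}_{i+1}}$. In each case the method is the same. We take an instance $\mathcal{X}$ of $\mathcal{E}_{i+1}$ all of whose finite substructures are solvable, and build from it an instance $\mathcal{X}'$ of $\mathcal{E}_i$ using a definable transformation, with no choices. We then check two things: finite substructures of $\mathcal{X}'$ are solvable, and any solution of $\mathcal{X}'$ yields a solution of $\mathcal{X}$, again without choice.

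\textbf{Homomorphic equivalence.} Here $\mathcal{E}_i$ and $\mathcal{E}_{i+1}$ have the same signature, so we take $\mathcal{X}'=\mathcal{X}$ and compose solutions with the homomorphisms $f,g$ in either direction.

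\textbf{Singleton expansion of a core.} Let $\mathcal{X}$ be an instance of $(\mathcal{E}_i,\{a\})$. Form $\mathcal{X}'$ by gluing on a fixed copy of $\mathcal{E}_i$ and identifying every element of $U^{\mathcal{X}}$ with the copy of $a$; equivalently, add fresh vertices for $\mathcal{E}_i$ and use the pp-definability of orbits. A finite piece of $\mathcal{X}'$ lies inside a finite piece of $\mathcal{X}$ together with the copy, so it is solvable. Given a solution $h$ of $\mathcal{X}'$, its restriction to the copy of $\mathcal{E}_i$ is an endomorphism, hence an automorphism $\sigma$, because $\mathcal{E}_i$ is a core. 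Then $\sigma^{-1}\circ h$ sends $a$-labelled points to $a$. The automorphism is a single finite object, so no choice is used.

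\textbf{pp-interpretation.} This is the main step. Suppose $\mathcal{E}_{i+1}$ is $S/\!\sim$ with $S\subseteq \mathcal{E}_i^m$ pp-definable and $\sim$ pp-definable, and each relation of $\mathcal{E}_{i+1}$ has a pp-definable preimage. Build $\mathcal{X}'$ as follows. For each $x\in X$ take variables $(x,1),\dots,(x,m)$ and impose the gadget for $S$, adding witness variables indexed canonically by the constraint they serve. For each tuple in a relation $R^{\mathcal{X}}$, add the gadget for the preimage of $R$, with witnesses indexed by that tuple and that conjunct. Everything is indexed by elements of $X$ and tuples, so the construction is definable and needs no choice. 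Then:
\begin{itemize}
\item[(i)] Any finite substructure of $\mathcal{X}'$ involves only finitely many original elements $F$. A solution of $\mathcal{X}\restriction F$ lifts to a solution of $\mathcal{X}'\restriction(\text{those variables})$. This lifting picks representatives of classes and witnesses, but only finitely many, so it is fine in \zf.
\item[(ii)] Given a solution $h$ of $\mathcal{X}'$, define $x\mapsto [(h(x,1),\dots,h(x,m))]_\sim$. This is canonical and respects the relations by the pp-definitions.
\end{itemize}
One subtlety needs care. Equality atoms $x_\ell=z_{\ell'}$ in pp-definitions, and the equivalence $\sim$ when it is used for the equality of $\mathcal{E}_{i+1}$, need to be handled by identifying variables, that is, by taking a quotient of the variable set. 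That quotient is again definable.

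The obstacle I expect is exactly this choice-free bookkeeping in the interpretation case. We must ensure that witnesses are indexed canonically rather than chosen, and that the finite-solvability transfer really only ever needs finitely many choices. The key point is that witnesses depend only on finitely many original elements, so a finite subset of $\mathcal{X}'$ determines a finite subset of $X$ whose solution can be lifted.
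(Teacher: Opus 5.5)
The paper gives no proof of this statement. It is imported from Kat\'ay, T\'oth and Vidny\'anszky, and the Tools section only sketches the gadget translation and the coding of a singleton expansion of a core that you describe. Your induction over the three kinds of steps is the standard argument and is right in outline. However, the locality claim behind (i), which you yourself flag as the crux, is false as stated and needs a small repair.

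\textbf{The locality step.} A finite set $F'$ of vertices of $\mathcal{X}'$ does not by itself determine a finite $F\subseteq X$ whose solutions lift. Two things go wrong.
\begin{itemize}
\item Even before any identification, a tuple of $\mathcal{X}'\restriction F'$ can be an atom of the gadget of a constraint $\bar t\in R^{\mathcal{X}}$ that mentions only the variables $(y,1),\dots,(y,m)$ of a single $y\in\bar t$. A lifted solution of $\mathcal{X}\restriction F$ is guaranteed to satisfy that atom only if all of $\bar t$ lies in $F$.
\item Once you identify variables (equality atoms, or $U^{\mathcal{X}}$ with the copy of $a$ in the core step), one vertex of $\mathcal{X}'$ can be a class containing variables of infinitely many original elements. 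Tuples through it can originate anywhere in $X$. Moreover, the lift is constant on the part of the class you actually use only if the equalities linking those variables come from constraints inside $F$.
\end{itemize}

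The repair is as follows.
\begin{enumerate}
\item For each of the finitely many tuples of $\mathcal{X}'\restriction F'$, choose one source atom.
\item Fix a representative of each class in $F'$.
\item For each variable of a chosen source atom, choose a finite chain of equality atoms connecting it to that representative.
\item Let $F$ be the set of all original elements occurring in the constraints and element gadgets used.
\end{enumerate}
Now lift a solution of $\mathcal{X}\restriction F$ using canonical representatives and witnesses (say, least in a fixed ordering of the relevant finite powers of $E_i$). This lift is constant along the chains and satisfies every chosen source atom, so it induces a solution of $\mathcal{X}'\restriction F'$. Only finitely many choices are made, so ZF suffices.

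\textbf{Two smaller remarks.}
\begin{itemize}
\item The equivalence $\sim$ cannot be ``handled by identifying variables'', because it relates values, not variables. It also needs no separate treatment. Its only role is to make the full preimages of the relations of $\mathcal{E}_{i+1}$ pp-definable. Since you impose these $\sim$-saturated preimages, any representatives work in the lift, and the map in (ii) is automatically well defined.
\item The alternative you mention in the core step, via pp-definability of the orbit of $a$, is equivalent only if all $U$-points are attached to one shared copy of $\mathcal{E}_i$. With separate witnesses per point, different points can land on different elements of the orbit, and no single automorphism corrects them all.
\end{itemize}
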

Moreover, as we mentioned earlier, there is an algebraic correspondence between $\leq_{\text{pp}}$ and polymorphisms; this is made precise by the following theorems, which allow us to decide whether or not $\mathcal{D}\leq_{\text{pp}}\mathcal{E}$ by studying their polymorphisms:
\begin{theorem}
\label{tool1}
For constraint libraries $\mathcal{D}$ and $\mathcal{E}$ with the same domain, $\mathcal{D}$ pp-defines $\mathcal{E}$ if and only if $\pol(\mathcal{D}) \subseteq \pol(\mathcal{E})$, and
$\mathcal{D}$ pp-constructs $\mathcal{E}$ if and only if any height 1 identity satisfied by $\pol(\mathcal{D})$ is satisfied by some subset of $\pol(\mathcal{E})$
\end{theorem}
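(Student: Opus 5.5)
This is the classical Galois correspondence, so I will prove it by the standard route. I assume Geiger/Bodnarchuk–Kaluzhnin–Kotov–Romov for the first half and Barto–Opršal–Pinsker for the second, but sketch both.

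\emph{First half.}

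The forward direction is routine. Polymorphisms preserve every relation of $\mathcal{D}$, and they also preserve equalities. Preservation is stable under conjunction and under existential projection: if $R(\bar x)\lra\exists\bar z\bigwedge\alpha_i$, then applying $f$ coordinatewise to witnesses $\bar z^1,\dots,\bar z^n$ for tuples $\bar x^1,\dots,\bar x^n\in R$ produces a witness for $f(\bar x^1,\dots,\bar x^n)$. Hence $\pol(\mathcal{D})\subseteq\pol(\mathcal{E})$.

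For the converse, fix a relation $R\in\mathcal{E}$ of arity $a$ and enumerate it as $R=\{\bar r^1,\dots,\bar r^m\}$. Consider the relation $S\subseteq D^{D^m}$ pp-defined by the canonical conjunctive query: introduce one variable $z_{\bar d}$ for each $\bar d\in D^m$, and impose every constraint $R_\ell(z_{\bar d_1},\dots,z_{\bar d_k})$ that holds coordinatewise. The solutions of $S$ are then exactly the $m$-ary polymorphisms of $\mathcal{D}$, viewed as tables. Next project onto the $a$ variables indexed by the columns $(r^1_j,\dots,r^m_j)$, $j\le a$. This projection is pp-definable, using equalities when two columns coincide, and it equals $\{(f(r^1_j,\dots,r^m_j))_j : f\in\pol(\mathcal{D})\}$. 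It contains $R$, since the projections are polymorphisms. It is contained in $R$ because each such $f$ lies in $\pol(\mathcal{E})$ and therefore preserves $R$. So $R$ is pp-definable.

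\emph{Second half.}

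For the forward direction, induct on the length of the construction. Assume $\mathcal{E}_i$ satisfies a height 1 identity $\Sigma$ via polymorphisms $f_1,\dots,f_k$. We check each step:
\begin{itemize}
\item For a pp-interpretation, act coordinatewise on $\mathcal{D}^n$. This preserves the pp-definable subset and the equivalence relation, by the first half, so it descends to the quotient. Height 1 identities survive because they involve no nesting.
\item For a homomorphic equivalence $h:\mathcal{E}_i\to\mathcal{E}_{i+1}$, $g:\mathcal{E}_{i+1}\to\mathcal{E}_i$, set $f'_j(\bar x)=h(f_j(g\bar x))$. Height 1 identities are preserved precisely because the outer symbol is never composed with another symbol.
\item For a core with a singleton added, use the standard fact that if a core satisfies $\Sigma$ then so do its idempotent polymorphisms. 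Take $e$ to be a power of $f(x,\dots,x)$ that is an automorphism, and replace $f_j$ by $e^{-1}\circ f_j$, again using height 1.
\end{itemize}

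For the converse, the plan is to show that $\mathcal{E}$ is pp-constructible from $\mathcal{D}$ once $\pol(\mathcal{D})\to\pol(\mathcal{E})$ admits a minor-preserving map. Such a map exists by compactness, since the height 1 conditions are finitary and $\mathcal{E}$ is finite, so a finite conjunction suffices. Take the free structure: let $N=|E|$ and consider the structure on $\pol_N(\mathcal{D})$, the $N$-ary polymorphisms. This is a pp-definable subset of $\mathcal{D}^{D^N}$. For each relation $R$ of $\mathcal{E}$ with tuples $\bar r^1,\dots,\bar r^m$, the induced relation on minors is pp-definable in the same canonical way. This gives a pp-power $\mathcal{F}$ of $\mathcal{D}$, and we then show that $\mathcal{F}$ is homomorphically equivalent to $\mathcal{E}$:
\begin{itemize}
\item projections give a homomorphism $\mathcal{E}\to\mathcal{F}$;
\item the minor-preserving map, evaluated at the identity tuple, gives a homomorphism $\mathcal{F}\to\mathcal{E}$.
\end{itemize}

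I expect the main obstacle to be this converse: correctly setting up the free structure and verifying that the minor-preserving map yields a homomorphism. This is where I would lean on the Barto–Opršal–Pinsker argument rather than reprove it.
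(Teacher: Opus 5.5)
Your proposal is correct. The paper states this theorem without proof, treating it as the well-known Galois correspondence (due to Geiger and Bodnarchuk et al.\ for pp-definability, and to Barto--Opr\v{s}al--Pinsker for pp-constructions). Your sketch reproduces exactly those standard arguments: the canonical conjunctive query for the first half, and for the second half the free structure on $N$-ary polymorphisms, which you show is homomorphically equivalent to $\mathcal{E}$.

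One simplification is available. The compactness step is not needed, because minor preservation is only used on the finitely many polymorphisms of $\mathcal{D}$ of arity $N$ and of arity $|R|$ for the relations $R$ of $\mathcal{E}$. That is already a single finite height 1 condition. This is also why the paper's phrase ``height 1 identity'' has to be read, as you do, as a finite system of identities.
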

\begin{corollary}
\label{tool2}
    If $\pol(\mathcal{D}) \subseteq \pol(\mathcal{E})$, then $K_{\mathcal{D}}$ implies $K_{\mathcal{E}}$ over \zf.
\end{corollary}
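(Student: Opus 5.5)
The corollary should follow by combining Theorem \ref{tool1} with Theorem \ref{tool3}. The plan is to show that the polymorphism inclusion $\pol(\mathcal{D}) \subseteq \pol(\mathcal{E})$ gives that $\mathcal{D}$ pp-constructs $\mathcal{E}$, and then apply the implication for pp-constructions. The first problem is that Theorem \ref{tool1} is stated for structures on the same domain, while the corollary says nothing about domains. I take the statement to implicitly assume $\mathcal{D}$ and $\mathcal{E}$ share a domain, since otherwise $\pol(\mathcal{D}) \subseteq \pol(\mathcal{E})$ makes no sense: polymorphisms are functions $D^n \to D$ and $E^n \to E$ respectively.

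Under that reading, the argument has two steps. First, by the first half of Theorem \ref{tool1}, the inclusion $\pol(\mathcal{D}) \subseteq \pol(\mathcal{E})$ is equivalent to $\mathcal{D}$ pp-defining $\mathcal{E}$. Second, a pp-definition is the special case $n=1$ of a pp-interpretation: take the pp-definable substructure of $\mathcal{D}^1$ to be all of $D$, which is pp-definable by the empty conjunction or by $x=x$, and quotient by the equality relation, which is pp-definable by the equality atom. So $\mathcal{E}$ is pp-interpretable in $\mathcal{D}$. This gives a one-step pp-construction $\mathcal{E}_0=\mathcal{D}$, $\mathcal{E}_1=\mathcal{E}$ using clause (1) of the definition.

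Theorem \ref{tool3} then yields $\zf \vdash K_{\mathcal{D}} \to K_{\mathcal{E}}$.

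As an alternative that avoids the interpretation bookkeeping, one can use the second half of Theorem \ref{tool1} directly. If $\pol(\mathcal{D}) \subseteq \pol(\mathcal{E})$, then any height 1 identity satisfied by some polymorphisms of $\mathcal{D}$ is satisfied by those same functions, which lie in $\pol(\mathcal{E})$. Hence $\mathcal{D}$ pp-constructs $\mathcal{E}$ immediately.

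I do not expect a real obstacle; the result is essentially a composition of the cited theorems. The only point needing care is the domain convention. The nontrivial content, namely that the gadget reduction preserves both finite satisfiability and the existence of solutions without invoking choice, is already packaged in Theorem \ref{tool3}. That reduction only uses a fixed finite pp-formula and finitely many witness variables per constraint, so it is uniform and choice-free.
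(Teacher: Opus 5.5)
Your proposal is correct and follows the same route the paper intends: the paper gives no separate proof, and the corollary follows from Theorem \ref{tool1} (the polymorphism inclusion gives pp-definability, hence a one-step pp-construction) combined with Theorem \ref{tool3}. Your same-domain reading is in fact forced rather than a convention, since the projections in $\pol(\mathcal{D})$ can only lie in $\pol(\mathcal{E})$ if $D=E$.
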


The height one identity defining totally symmetric polymorphisms will be of particular importance in this paper.

\begin{dfn}
    Say that a polymorphism $f$ is \textbf{totally symmetric}, or $TS$, if it only depends on its set of inputs, without regard to order or multiplicity. More formally, $f$ is TS if
    \[f(x_1,..., x_n)=f(y_1,..., y_n)\] whenever $\{x_1,..., x_n\}=\{y_1,..., y_n\}.$
\end{dfn}

Another main tool we use is the permutation model construction. This construction only gives models of ZFA, set theory with atoms. We work over this weaker system since the construction captures the combinatorial core of our arguments. We are able transfer these results to $\zf$ as well using standard forcing arguments; see Appendix \ref{appendix: zf} for details. 
\begin{definition}
    ZFA denotes the usual Zermelo-Fraenkel (ZF) axiomatic system with the additional axiom
$$(\exists A) \;\bigl(A\text{ is infinite}\wedge (\forall a\in A, x) \;(x\not\in a)\bigr)$$
and with the axiom of extensionality restricted to sets not in $A$. The set $A$ will be referred to as the set of atoms.
\end{definition}

Given a model $\mathcal{M}$ of ZFA with the set of atoms $A$, and $G\in \mathcal{M}$ a group of permutations of $A$, we can construct a submodel $HS_G$ of ZFA as follows:\par
We abuse notation and extend each $\sigma\in G$ to automorphism of $\mathcal{M}$ defined by rank induction $$\sigma(X)=\{\sigma(x):x\in X\}$$
\begin{definition}
    If $E\subseteq A$, then $$\text{fix}(E)=\{\sigma\in G:\sigma\restriction E=\text{id}\}$$ is the set of pointwise stabilizers of $E$, where id is the identity function.
    For $X\in\mathcal{M}$, we write $$\text{sym}(X)=\{\pi\in G:\pi(X)=X\}$$ to be the setwise stabilizers of $X$. Let $\mathcal{F}$ be the filter on $G$ given by $$\mathcal{F}=\{S\subseteq G:(\exists E\subseteq A)\; E\text{ is finite}\wedge\text{fix}(E)\subseteq S\}$$
\end{definition}

To underscore the difference, observe that $\fix(X)=\bigcap\{\sym(x): x\in X\}.$ 

\begin{definition} $X$ is \textbf{symmetric} when $\text{sym}(X)\in\mathcal{F}$, and $X$ is \textbf{hereditarily symmetric} (\textbf{HS}) if every element of the transitive closure of $\{X\}$ is symmetric.
\end{definition}
\begin{theorem}[\cite{jech2008axiom}]
    If $\mathcal{M}$ is a model of $\zfa$ and $G\in \mathcal{M}$ is a group permuting $A$, then the hereditarily symmetric model derived from $G$, HS$_G:=\{S\in\mathcal{M}:S\text{ is HS}\}$, is a model of $\zfa$
\end{theorem}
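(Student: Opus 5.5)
The plan is the standard verification from \cite{jech2008axiom}. First I would establish the algebraic facts that make the filter $\mathcal{F}$ \emph{normal}. For $\sigma\in G$ and finite $E\subseteq A$ we have $\sigma\,\fix(E)\,\sigma^{-1}=\fix(\sigma[E])$, and $\sigma[E]$ is again finite. Hence $\mathcal{F}$ is a filter of subgroups closed under conjugation by elements of $G$. Next, since each $\sigma$ (extended by $\in$-recursion) is an $\in$-automorphism of $\mathcal{M}$, we get $\sym(\sigma X)=\sigma\,\sym(X)\,\sigma^{-1}$. Consequently $X$ is symmetric iff $\sigma X$ is, and by induction on rank, $X\in HS_G$ iff $\sigma X\in HS_G$. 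So every $\sigma\in G$ restricts to an automorphism of the class $HS_G$. It then follows by induction on formulas that, for any formula $\varphi$ relativized to $HS_G$ and any $\sigma\in G$,
\[\varphi^{HS_G}(x_1,\dots,x_k)\;\lra\;\varphi^{HS_G}(\sigma x_1,\dots,\sigma x_k).\]

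Second, I would record the basic closure properties. $HS_G$ is transitive by definition, since it is defined via the transitive closure. Every atom $a$ is symmetric, because $\fix(\{a\})\subseteq\sym(a)$. Every pure set $x$ satisfies $\sigma x=x$ for all $\sigma\in G$, hence is HS. The set $A$ itself is fixed by all of $G$, hence $A\in HS_G$. Each class $HS_G\cap V_\alpha$ (computed with the cumulative hierarchy over atoms) is fixed setwise by every $\sigma$, and is therefore itself in $HS_G$.

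Third, I would check the axioms one at a time.
\begin{itemize}
\item Extensionality for non-atoms and Foundation are inherited from $\mathcal{M}$ by transitivity.
\item Infinity holds because $\omega$ is pure, and the ZFA atom axiom holds because $A\in HS_G$.
\item Pairing and Union hold because $\sym(\{x,y\})\supseteq\sym(x)\cap\sym(y)$ and $\sym(\bigcup X)\supseteq\sym(X)$.
\item For Power set, take $P=\mathcal{P}(X)\cap HS_G$. Every $\sigma\in\sym(X)$ maps $P$ onto itself, so $P$ is symmetric, and its elements are HS by construction.
\item For Separation, let $Y=\{x\in X:\varphi^{HS_G}(x,\vec p)\}$. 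By the invariance displayed above, every $\sigma\in\sym(X)\cap\bigcap_i\sym(p_i)$ fixes $Y$; this intersection lies in $\mathcal{F}$. Elements of $Y$ are in $HS_G$ since $X$ is.
\item For Replacement, apply Collection in $\mathcal{M}$ to the given class function to find $\alpha$ with all its values in $HS_G\cap V_\alpha$. That set belongs to $HS_G$ by the second step, and Separation then cuts out the exact image.
\end{itemize}

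The main obstacle is the invariance statement in the first step, that each $\sigma$ is an automorphism of $HS_G$ preserving relativized formulas. Everything else (Separation, Replacement, Power set) reduces to it together with the normality of $\mathcal{F}$. The other point needing care is the use of the sets $HS_G\cap V_\alpha$ to bound images. This requires $G\in\mathcal{M}$, so that $HS_G$ is a definable class of $\mathcal{M}$ and Collection applies.
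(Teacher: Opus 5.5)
Your proposal is correct and is essentially the standard verification from Jech, which the paper cites without giving a proof of its own. The ingredients Jech uses are the ones you identify: normality of the finite-support filter, the fact that each $\sigma\in G$ is an $\in$-automorphism of $\mathrm{HS}_G$ (giving Separation), and $\mathrm{HS}_G\cap\mathcal{P}^\alpha(A)\in\mathrm{HS}_G$ (giving Replacement).
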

Note that we use a restricted definition of HS here, since we always use the filter generated by $\{\{X:X \supseteq \mathcal{F}\}: \mathcal{F} \subseteq A \text{ is finite}\}$.

Since HS$_G$ is a model of $\zfa$ for every model $\mathcal{M}$ and $G\in\mathcal{M}$, we can conclude:
\begin{theorem}
\label{tool4}
If ZFC proves that there is a group $G$ so that every $\text{HS}_G$ instance of $\mathcal{D}$ with a solution has a $\text{HS}_G$ solution but that there is an $\text{HS}_G$ instance of $\mathcal{E}$ with a solution but no $\text{HS}_G$ solution, then $K_{\mathcal{D}}$ does not imply $K_{\mathcal{E}}$ over $\zfa$.
\end{theorem}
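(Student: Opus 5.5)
The plan is to argue by contraposition inside a fixed model of ZFC with atoms and a group $G$ as in the hypothesis. Suppose, towards a contradiction, that $\zfa+K_{\mathcal{D}}\vdash K_{\mathcal{E}}$. Since $\text{HS}_G$ is a model of $\zfa$ (by the theorem of \cite{jech2008axiom} quoted above), it suffices to show two things: $\text{HS}_G\models K_{\mathcal{D}}$ and $\text{HS}_G\models\neg K_{\mathcal{E}}$. Soundness then yields the desired non-implication.

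First, for $\text{HS}_G\models K_{\mathcal{D}}$, fix an instance $\mathcal{X}\in\text{HS}_G$ of $\mathcal{D}$ such that $\text{HS}_G$ believes every finite substructure of $\mathcal{X}$ has a solution.

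1. Finite subsets of elements of $\text{HS}_G$ are in $\text{HS}_G$: a finite subset $F$ of an HS set is fixed by the intersection of the symmetry groups of its elements, which lies in $\mathcal{F}$.
2. Maps from a finite set into the finite structure $\mathcal{D}$ are likewise HS, since $\mathcal{D}$ is a pure set and is fixed by all of $G$.
3. Hence the statement ``every finite substructure has a solution'' is absolute between $\text{HS}_G$ and $\mathcal{M}$.
4. In $\mathcal{M}$, which satisfies choice (assuming the ambient model satisfies ZFC with atoms, the usual setting), compactness gives a solution to $\mathcal{X}$.
5. The hypothesis on $G$ then gives a solution in $\text{HS}_G$. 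Being a homomorphism is $\Delta_0$, so it is absolute.

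Second, for $\text{HS}_G\models\neg K_{\mathcal{E}}$, take the HS instance $\mathcal{Y}$ of $\mathcal{E}$ that has a solution in $\mathcal{M}$ but no HS solution.

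1. Restricting the solution in $\mathcal{M}$ to any finite substructure gives a map from a finite set into $\mathcal{E}$, which is HS by step 2 above.
2. So $\text{HS}_G$ thinks all finite substructures of $\mathcal{Y}$ are solvable.
3. $\text{HS}_G$ does not think $\mathcal{Y}$ is solvable, since any such solution would be an HS solution.

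The main point requiring care is the absoluteness of finite subsets and of finite partial solutions. Here I would check explicitly that the pointwise stabilizer of the finitely many relevant supports lies in $\mathcal{F}$, using that $\mathcal{F}$ is a normal filter closed under finite intersections. Another point of care is that ``ZFC proves'' is to be read as holding in a ZFC-with-atoms ground model, so that the ambient $\mathcal{M}$ has choice.
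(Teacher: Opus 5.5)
Your proposal is correct and follows essentially the same route as the paper: you show $\text{HS}_G\models K_{\mathcal{D}}$ by observing that finite substructures and their solutions are hereditarily symmetric, applying compactness in the ground model with choice, and then invoking the hypothesis on $G$; you show $\text{HS}_G\models\neg K_{\mathcal{E}}$ with the given instance. Your explicit checks (finite subsets of HS sets are HS via finite intersections in $\mathcal{F}$, absoluteness of being a homomorphism, and the ground model being ZFA with choice) fill in details the paper's short proof leaves implicit.
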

\begin{proof}
    Note that if $X\in\text{HS}_G$ is an instance of $\mathcal{D}$, then every finite substructure $F$ of $X$ is also in $\text{HS}_G$, and a solution for $F$ is also in $\text{HS}_G$,
    \begin{align*}
        \text{HS}_G\models \text{every finite }F\subseteq X\text{ has a solution}&\Leftrightarrow \text{every finite }F\subseteq X\text{ has a solution}\\
        &\Leftrightarrow X\text{ has a solution}\\
        &\Leftrightarrow\exists f\in\text{HS}_G(f\text{ is a solution for }X)\\
        &\Leftrightarrow\text{HS}_G\models f\text{ is a solution for }X
    \end{align*}
    where the second equivalence follows by compactness in $V$, so $\text{HS}_G$ is a model of $\zfa+K_{\mathcal{D}}$. On the other hand, if $Y$ is the instance of $\mathcal{E}$ as in the theorem, then every finite substructure and its solution are also in $\text{HS}_G$, but $\text{HS}_G\models\text{ there is no solution for }Y$, so $\text{HS}_G\models\zfa+K_{\mathcal{D}}+\neg K_{\mathcal{E}}$.
\end{proof}

\section{2-element structures}
In this section, we make some progress on classifying compactness principles for structures on 2 elements (also called Boolean structures). Post classified all polymorphism algebras on 2 elements. These polymorphism algebras form a lattice when ordered by containment, which is shown in in Figure \ref{fig:postlattice}. We call this lattice Post's lattice. 

Figure \ref{fig:postlattice} follows the naming convention used by Wikipedia \cite{wikipediaPosts}. The main algebras of importance in this article are \[\mathsf{\Lambda}P=\ip{\wedge},\; UP_0=\ip{0}, \; UP_1=\ip{1}, \;\mathsf{V}P=\ip{\vee}, \;UD=\ip{\neg},\] \[AD=\ip{x+y+z, \neg}, \; AP=\ip{x+y+z},\; DM=\ip{\maj},\; D=\ip{\maj, \neg},\] \[DP=\ip{\maj, x+y+z},\; \top=\bigcup_i 2^{2^i}, \mbox{ and }\perp=\ip{}.\] Here, for a set of operations $A$, $\ip{A}$ means the smallest set of functions on $\{0,1\}$ closed under compositions and containing $A$ along with all of the projections. 

By Theorem \ref{tool1}, Post's lattice is equivalent to the lattice of 2-element structures ordered by the reverse of pp-definability. However, the relations between the compactness principles of structures only depend on the structures' pp-constructibility relations, which are generally much coarser. Let us group the elements in Post's lattice together accordingly.

\begin{figure}
    \centering
    \includegraphics[width=0.5\linewidth]{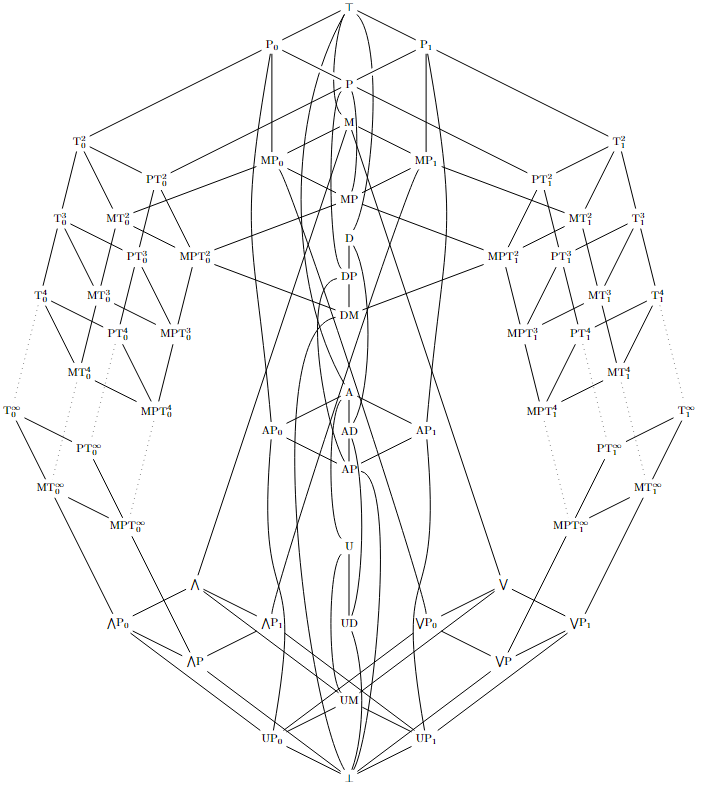}
    \caption{Post's lattice on clones for 2-element sets, ordered by containment}
    \label{fig:postlattice}
\end{figure}

\begin{figure}
    \centering
    \includegraphics[width=0.7\linewidth]{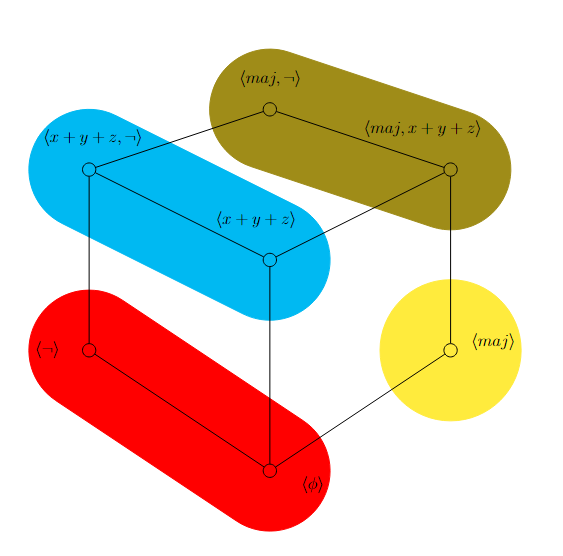}
    \caption{Clones on 2 elements with no TS polymorphism grouped by pp-construction class}
    \label{fig:reducedlattice}
\end{figure}

\begin{prop}
    For $\s D$ a Boolean structure, either $\s D$ has a $TS$-polymorphism or ${\s D}$ is pp-equivalent to one of the following:
    \begin{enumerate}
        \item 3\sat
        \item 2\sat
        \item 3\text{LIN}2
        \item $K_2$.
    \end{enumerate}
\end{prop}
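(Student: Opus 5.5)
We read the statement off Post's lattice, using Theorem \ref{tool1}: pp-constructibility between structures is determined by which height 1 identities their polymorphism clones satisfy. Moreover, if $\pol(\s D)\subseteq\pol(\s E)$ then $\s E\leq_{\text{pp}}\s D$. So it suffices to sort the clones in Post's lattice.

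First we dispose of the clones containing a TS operation of every arity. A clone containing $\wedge$ or $\vee$ has the TS operations $x_1\wedge\dots\wedge x_n$, and one containing a constant has constant TS operations. Going through Post's lattice, every clone not contained in $D=\ip{\maj,\neg}$ or in $AD=\ip{x+y+z,\neg}$ contains one of $\wedge$, $\vee$, $0$, $1$. This is the standard description of the lattice: the coatoms and the maximal clones avoiding these operations are the self-dual clone and the affine clone, together with their subclones. So we may assume $\pol(\s D)$ is one of the clones below $D$ or below $AD$. These are $\perp$, $UD$, $DM$, $D$, $AP$, $AD$, $DP$ (and the idempotent/self-dual variants of these that appear in the lattice). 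We can also check directly that, for instance, $DP=\ip{\maj,x+y+z}$ has no TS operation of arity $n\geq 2$ unless it is already forced by projections. This last point is not needed for the statement, which is only a dichotomy.

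Next we group the remaining clones by height 1 identities. If $\pol(\s D)\subseteq UD=\ip{\neg}$, every polymorphism is essentially unary, i.e.\ a projection or a negated projection. If the clone is just $\perp$, then $\s D$ pp-constructs everything and is equivalent to $3\sat$. If it is $\ip{\neg}$, note that height 1 identities do not distinguish it from projections, since $\neg$ is a unary function, and a minion homomorphism $\ip{\neg}\to\perp$ exists. Hence it is also pp-equivalent to $3\sat$. This is also the reason $K_2$ (whose polymorphisms are generated by $\neg$ together with...) needs care. We compute that $\pol(K_2)$ on $\{0,1\}$ is the clone of all self-dual operations, i.e.\ $D$-type with $\maj$ and $x+y+z$, so $\pol(K_2)=\ip{\maj,x+y+z,\neg}$. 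For clones between $\ip{\maj}$ and $\ip{\maj,\neg}$ we map to $2\sat$, since $\pol(2\sat)=DM$. Adding $\neg$ does not change the height 1 identities: compose with a negation projection to land back in the idempotent part. So these are pp-equivalent to $2\sat$. The affine clones $AP$ and $AD$ are likewise pp-equivalent to $3\text{LIN}2$, whose polymorphism clone is $AP$. The clones containing both $\maj$ and $x+y+z$, i.e.\ $DP$ and its self-dual extension, are pp-equivalent to $K_2$. In each case the equivalence comes from exhibiting minion homomorphisms both ways, built from the inclusion and from the map $f\mapsto f$ restricted to its idempotent reduct, via the core trick (expanding by singletons).

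The main obstacle is bookkeeping in Post's lattice: verifying that the list of clones containing no TS operations of all arities is exactly the self-dual/affine/majority region. Near the bottom we must also make sure that the infinite families in Post's lattice do not sneak in. These are the clones generated by threshold functions such as $T^{k}_{k+1}$ and their intersections with $\wedge$-type clones. Those clones contain $x\wedge(y\vee z)$-type operations, or are below $\ip{\wedge}$ or $\ip{\vee}$ variants, and admit TS operations. One checks that every clone in the infinite families contains a clone $\ip{x\wedge(y\vee z)}$ or $\ip{\wedge}$-like element yielding TS polymorphisms, or else is comparable to $DM$. Then the four listed classes exhaust the remaining possibilities.
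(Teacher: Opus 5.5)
Your reduction to the TS-free clones is fine. Your list $\perp, UD, AP, AD, DM, DP, D$ is complete, and in fact all seven lie below $D$, since $AD\subseteq D$. The genuine error is in the grouping step.

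You claim that every clone between $\ip{\maj}$ and $\ip{\maj,\neg}$, namely $DM$, $DP$ and $D$, is pp-equivalent to $2\sat$, because ``adding $\neg$ does not change the height 1 identities.'' Normalizing by negation ($f\mapsto f$ if $f(0,\dots,0)=0$, and $f\mapsto \neg\circ f$ otherwise) is indeed a minion homomorphism from $\ip{\maj,\neg}$ onto its idempotent part. But that idempotent part is $DP=\ip{\maj,x+y+z}$, not $DM=\ip{\maj}$. Your own computation $\pol(K_2)=\ip{\maj,x+y+z,\neg}$ shows that $\ip{\maj,\neg}$ contains the minority $x+y+z$. The minority satisfies the height 1 identities $m(x,x,y)=m(x,y,x)=m(y,x,x)=y$. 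No monotone operation can satisfy them: it would need $m(0,1,0)=1$ but $m(1,1,0)=0$. So by Theorem \ref{tool1}, no structure whose clone is $D$ or $DP$ pp-constructs $2\sat$. In particular $K_2\not\equiv_{\text{pp}}2\sat$. Otherwise Theorem \ref{tool3} would make $K_{K_2}$ and $K_{2\sat}$ equivalent over $\zf$, contradicting the separation proved later in the paper.

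As written, your classification is therefore self-contradictory. You later, correctly, place $DP$ and $D$ in the class of $K_2$, and combined with the earlier claim this would force $K_2\equiv_{\text{pp}}2\sat$. The repair is to put only $DM=\pol(2\sat)$ in the $2\sat$ class. Then merge $D=\pol(K_2)$ with $DP=\pol(K_2,\{0\})$, exactly as you merge $UD$ with $\perp$ and $AD$ with $AP$.

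With that change your argument is essentially the paper's. The paper names a structure for each TS-free clone ($3\sat$, the not-all-equal relation, $2\sat$, $\bb F_2$, $\bb L$, $K_2$, $(K_2,\{0\})$). It then performs the three mergers using the rule that a core is pp-equivalent to its expansion by a singleton. Your negation-normalizing minion homomorphisms encode the same thing algebraically.
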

\begin{proof}
Figure \ref{fig:reducedlattice} shows a part of Post's lattice, with five classes highlighted. Inspecting Post's lattice will reveal that the classes of algebras we include in Figure \ref{fig:reducedlattice} are those with no TS polymorphism. Note that
    
    \[\pol(2\sat)=\ip{\maj},\;\pol(\mathbb{F}_2)=\ip{ x+y+z},\; \pol(K_2)=\ip{\maj,\neg}\]

    \[\ip{\emptyset}=\pol(3\sat), \;\ip{\neg}=\pol(\{0,1\},\{(x,y,z): \neg(x=y=z)\})\]
    
    \[\pol(\mathbb{L})=\ip{ x+y+z,\neg},\; \pol(K_2,\{0\})=\ip{\maj,x+y+z}\] where $\maj(x,y,z)\in\{0,1\}$ is the unique value repeated among $x,y,z$, and $\bb L$ is $\bb Z/2\bb Z$ equipped with all affine relations which are preserved by $\neg$. 
    
    For a structure on a $2$-element set, being a core means that no constant function is a polymorphism. Expanding by all unary singleton relations kills off automorphisms, in the sense that if $\s D$ is $\s E$ with all single unary relations, then $\pol(\s D)$ is the set of elements of $\pol(\s E)$ satisfying $f(x,x,...,x)=x.$ Note that all of these structures are cores, so \[(K_2,\{0\})\equiv_{\text{pp}}K_2, \quad\mathbb{L}\equiv_{\text{pp}}(\mathbb{L},\{0\})\equiv_{\text{pp}}\mathbb{F}_2,\] 
    \[\mbox{ and } 3\sat\equiv_{\text{pp}}(\{0,1\},\{(x,y,z): \neg(x=y=z)\}).\]

    Therefore, the four highlighted classes in Figure \ref{fig:reducedlattice} exhaust the pp-construction classes among structures without TS polymorphisms.
\end{proof}

In fact, these are exactly the pp-reduction classes, as will follow from our results. In Section \ref{sec: provable} we will show that the structures $\mathcal{D}$ with $K_{\mathcal{D}}$ provable from $\zf$ are exactly those with a TS polymorphism. Thus, Figure \ref{fig:reducedlattice} shows the possible polymorphism algebras of structure with non-trivial compactness principles and their pp-equivalence classes. Kat\'ay, T\'oth, and Vidny\'ansky's theorem characterizes the compactness principles of maximal strength; in this case those which are pp-equivalent to $3\sat$. So, by Theorem \ref{tool1}, we only have to analyze the three intermediate cases of 2\sat, $\bb F_2$, and $K_2$ to understand all compactness principles for Boolean structures. We will analyze these cases in Section \ref{sec: boolean analysis}. In particular, we show they are all distinct and that $K_{K_2}$ is strictly weaker than the others. This answers a question posed by K{\'a}tay, T{\'o}th, and Vidny{\'a}nszky.

\subsection{Characterizing provable compactness principles} \label{sec: provable}
In this section, we will find all $K_{\mathcal D}$ that are provable from \zf. Note that this is only non-trivial when $\zf$ is consistent, which is what we shall assume in the following part.\par

We will begin by defining a class of relatively simple structures with sufficient symmetry so that a proof of their compactness principles can be carried out in \zf. They are the width-1 structures, by which we mean a structure whose satisfaction problem can be computed by keeping track of only facts about one variable with $k$ update rules, for some fixed $k\geq0$. Such structures have a nice algebraic characterization given in a classic work of Dalmau and Pearson \cite{dalmau1999closure}. We then show these structures correspond exactly to the \zf-provable compactness principles. One direction was already shown in \cite{RorabaughTardifWehlau}. We include a direct proof for the reader's convenience.

\begin{dfn} An instance $\mathcal X$ of $\mathcal D$ is \textbf{arc-consistent} if there is a function $U: \mathcal X \to \mathcal P(\mathcal D)$ such that 
\begin{enumerate}
    \item For each $x \in \mathcal X, U(x) \neq \emptyset$
    \item If $a \in U(x)$ and there is some tuple $(x_1, \cdots , x_n) \in \mathcal R^{\mathcal X}$ with $x_i=x,$ then there is some tuple $$(a_1, \cdots , a_n) \in \mathcal R^{\mathcal D} \cap U(x_1) \times \cdots \times U(x_n)$$ so that $a_i=a.$
\end{enumerate}

 \end{dfn}
Recall that a polymorphism $f$ is totally symmetric, or TS, if it only depends on the set of inputs (not the order or multiplicity). In this case, if the arity of $f$ is at least $|D|$, then $f$ can be interpreted as a function $f^*:\mathcal{P}(\mathcal{D})\setminus\{\varnothing\}\to D$ via \[f^*(A)=f(a_1,...,a_n)\] here $a_1,..., a_n$ is any enumeration of $A$. 

\begin{definition}
    $\mathcal{D}$ is width-1 if and only if every arc-consistent instance of $\mathcal{D}$ has a solution.
\end{definition}
Dalamau and Pearson only give a proof of this theorem for finite instances, but it works for infinite instances as well. We include a sketch for completeness.
\begin{theorem}[\zf] \label{thm: dp} A finite relational structure $\mathcal{D}$ is width-1 if and only if $\mathcal{D}$ has totally symmetric polymorphisms of arbitrarily high arity.
\end{theorem}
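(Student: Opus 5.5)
We prove the two directions separately. Throughout we use that $D$ is finite, so $\mathcal P(D)\setminus\{\varnothing\}$ is finite and no choice is needed to handle subsets of $D$.

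\emph{TS polymorphisms $\Rightarrow$ width-1.} Fix a TS polymorphism $f$ of arity $n\geq |D|$ and let $f^*:\mathcal P(D)\setminus\{\varnothing\}\to D$ be the induced map. Let $\mathcal X$ be arc-consistent, witnessed by $U$. Define $h(x)=f^*(U(x))$. This is a definable function of $U$, so it exists in $\zf$.

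To see that $h$ is a solution, fix a tuple $(x_1,\dots,x_m)\in R^{\mathcal X}$. For each $i$ and each $a\in U(x_i)$, arc-consistency provides a tuple $\bar b^{i,a}\in R^{\mathcal D}\cap U(x_1)\times\cdots\times U(x_m)$ with $b^{i,a}_i=a$. Only finitely many such tuples exist in $D^m$, so we can pick them without choice, for instance by fixing a well-order of $D^m$ and taking the least.

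Now list all these tuples $\bar b^{i,a}$, for $i\le m$ and $a\in U(x_i)$, as the columns of an $m\times N$ matrix. In row $i$ the set of entries is exactly $U(x_i)$: it is contained in $U(x_i)$ because every chosen tuple lies in the product, and it covers $U(x_i)$ because of the tuples $\bar b^{i,a}$. We need $N\le n$. If $N<n$, pad by repeating columns, which does not change the row sets. Applying $f$ row-wise gives $(f^*(U(x_1)),\dots,f^*(U(x_m)))\in R^{\mathcal D}$, since $f$ is a polymorphism. Here total symmetry is what makes the value on each row depend only on its set of entries.

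The arity issue is the subtle point. $N$ can be as large as $m|D|$, so we use a TS polymorphism of arity at least $\max_R \mathrm{ar}(R)\cdot|D|$. Such a polymorphism exists because the hypothesis gives TS polymorphisms of arbitrarily high arity.

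\emph{Width-1 $\Rightarrow$ TS polymorphisms.} Fix $n$. Consider the instance $\mathcal X$ whose domain is $\mathcal P(D)\setminus\{\varnothing\}$, with $(A_1,\dots,A_m)\in R^{\mathcal X}$ iff for every $j$ and every $a\in A_j$ there is $\bar b\in R^{\mathcal D}\cap A_1\times\cdots\times A_m$ with $b_j=a$. We take $U(A)=A$. This witnesses arc-consistency directly from the definition, so width-1 gives a solution $h$.

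Define $f(a_1,\dots,a_n)=h(\{a_1,\dots,a_n\})$. This is totally symmetric by construction. It remains to check that $f$ is a polymorphism. Take columns $\bar c^1,\dots,\bar c^n\in R^{\mathcal D}$ and let $A_j$ be the set of entries in row $j$. Each $a\in A_j$ appears in some column, and that column lies in $R^{\mathcal D}\cap A_1\times\cdots\times A_m$. Hence $(A_1,\dots,A_m)\in R^{\mathcal X}$, and so its image under $h$ lies in $R^{\mathcal D}$. This image is exactly the row-wise application of $f$. This direction uses only a finite instance and is entirely choice-free.

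The main obstacle is the first direction: the sets $U(x_i)$ may vary with the constraint, so we must choose witness tuples uniformly and then match arities by padding. Finiteness of $D$ removes any need for choice in both steps.
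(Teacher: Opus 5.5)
You are following the paper's proof in both directions: the same power-set instance with $U(A)=A$ for width-1 $\Rightarrow$ TS, and $h=f^*\circ U$ together with the matrix of arc-consistency witnesses for the converse. Your argument is complete when $\mathcal D$ has finitely many relations.

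The gap is the step ``use a TS polymorphism of arity at least $\max_R \mathrm{ar}(R)\cdot|D|$''. The paper explicitly intends the theorem to cover structures such as $\mathbb F_2$, which has infinitely many relations of unbounded arity. For such structures this maximum does not exist, so no single $f$ can handle every constraint. Nor can you take a larger polymorphism for each constraint separately. A different $f$ induces a different set map $f^*$, hence a different candidate solution $h$, and $h$ must be fixed before any constraint is checked.

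The missing idea is the pigeonhole step the paper supplies. There are only finitely many maps $\mathcal P(D)\setminus\{\varnothing\}\to D$. So some single map $f^*$ is induced by TS polymorphisms of arbitrarily large arity, among those of arity at least $|D|$. This needs no choice: if each of the finitely many fibres had bounded arities, the largest of these bounds would bound all arities. Define $h=f^*\circ U$ from this common $f^*$. Then, for a constraint of arity $m$, run your matrix-and-padding argument with a member of that fibre of arity at least $m|D|$. With this modification your proof coincides with the paper's.
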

\begin{proof}
  Let's work in $\zf$. First, we prove the forward direction. Let $\mathcal D=(D,R_1,...,R_n)$ be width-1 constraint library and consider the instance \[\mathcal X=(\mathcal P(D)\setminus \emptyset,R_1^X,...,R_n^X)\] defined by \[R_i^X(A_1,...,A_{a_i}) :\lra\; (\forall j, \forall x_j\in A_j)(\exists x_i\in A_i \mbox{ for each } i\neq j)\; R(x_1,...x_{a_i}).\] The structure $\mathcal X$ is arc-consistent as witnessed by the identity map and therefore admits a solution $s:\mathcal X\to\mathcal D$. Now fix $n\geq|D|$ and consider $f:\mathcal D^n\to \mathcal D$ via $f(a_1,...,a_n)=s(\{a_1,...,a_n\})$. Then $f$ is a TS polymorphism of arity $n$ by construction of this $\mathcal X$.\par
  
  Now, for the converse suppose that $\s F$ is a family of TS polymorphisms of $\s D$ with arbitrarily large arity. One small subtlety is that we would like consider structures like $\bb F_2$ with infinitely many relations. Since $D^{\s P(D)}$ is finite, we can assume without loss of generality that each polymorphism $f\in\s F$ induces the same set map $f^*$ as discussed above. Fix any arc-consistent instance $\mathcal X$ of $\mathcal D$, and let $U:X\rightarrow \s P(D)$ witness arc-consistency. Let $f^*:\mathcal P(D)\setminus\{0\}\to\mathcal D$ be the common set map in $\s F$ and define $s:\mathcal X\to\mathcal D$ by $s=f\circ U$. We'll check that $s$ is a solution.

    Suppose $R^X(x_1,..., x_n)$. Fix $f\in \s F$ with arity at least $n|D|$. For each $a_i\in U(x_i)$ there is $\bar a\in R^{\s D}\cap \prod_i U(x_i)$ with $\pi_i(\bar a)=a_i$. So, we can form an $n\times n|D|$ matrix $M$ where the $i^{th}$ row enumerates $U(x_i)$ and each column is in $R^{\s D}.$ Say $M$ has rows $M_1,..., M_n$. Then, $f(M_i)=f^*(U(x_i))=s(x_i)$, and since $f$ is a polymorphism, $R^{\s D}(f(M_1),..., f(M_n))$. Thus, $R^{\s D}(s(x_1),...,s(x_n))$ as desired.
\end{proof}

\begin{theorem}[Con(\zf)]
    For a finite relational structure $\mathcal{D}$, the following are equivalent:
    \begin{enumerate}
        \item $\mathcal{D}$ is width-1 
        \item $K_{\mathcal{D}}$ is provable in \zf.
    \end{enumerate}

\end{theorem}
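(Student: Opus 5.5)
(1)$\Rightarrow$(2): We work in \zf\ and let $\mathcal X$ be an instance of a width-1 structure $\mathcal D$ all of whose finite substructures admit solutions. The plan is to show that $\mathcal X$ is arc-consistent; the definition of width-1 then yields a solution. The difficulty is to produce the witness $U$ without any choice, so we define it canonically. For each finite $F\subseteq X$ and each $x\in F$, let $S_F(x)\subseteq D$ be the set of values $s(x)$ taken by solutions $s$ of the finite substructure on $F$. Then put
\[U(x)=\bigcap\{S_F(x): F\subseteq X \text{ finite},\ x\in F\}.\]
This is an intersection of a downward directed family: enlarging $F$ shrinks $S_F(x)$, because restrictions of solutions remain solutions. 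Each member is a nonempty subset of the finite set $D$, so the intersection is attained at some finite $F_x$ and is nonempty. To check arc-consistency, take $a\in U(x)$ and a tuple $(x_1,\dots,x_n)\in R^{\mathcal X}$ with $x_i=x$. Let $F$ be the union of the $F_{x_j}$ together with the $x_j$. Since $a\in S_F(x)$, there is a solution $s$ on $F$ with $s(x)=a$. Then $(s(x_1),\dots,s(x_n))\in R^{\mathcal D}$, and each $s(x_j)\in S_F(x_j)=U(x_j)$, which is exactly condition (2). No choices are made: $U$ is defined outright, and the existence of $F_x$ is only used inside an existential statement. Structures with infinitely many relations cause no trouble, since arc-consistency only ever involves one tuple at a time.

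(2)$\Rightarrow$(1): We argue the contrapositive. If $\mathcal D$ is not width-1, then by Theorem \ref{thm: dp} there is some $N$ such that $\mathcal D$ has no TS polymorphism of arity $N$. Since $N\ge |D|$ can be assumed, it follows that $\mathcal D$ has no TS polymorphism of arity at least $|D|$, and hence no map $f^*:\mathcal P(D)\setminus\{\varnothing\}\to D$ satisfying the polymorphism condition on the instance $\mathcal X_0$ built in the proof of Theorem \ref{thm: dp}. The aim is a permutation model via Theorem \ref{tool4} (with $\mathcal E$ trivial), transferred to \zf\ through the appendix, in which some instance has a solution in $V$ but no HS solution.

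The construction is as follows. Take atoms $A$ indexed as $\mathcal X_0\times\omega\times D$, and let $G$ be the group that permutes each fiber $\{p\}\times\{n\}\times D$ by permutations realizing all ways of presenting $p$. The instance is a disjoint union of countably many copies of a finite ``universal'' arc-consistent instance whose solutions must be built from the fiber labels. A symmetric solution is fixed by $\fix(E)$ for some finite $E$, so on almost every copy it must be invariant under the full symmetry. An invariant solution on such a copy defines a TS-style set map $f^*$, which we have just ruled out. Each finite substructure meets finitely many copies and has a solution, because finite arc-consistent pieces are solvable in $V$ (ZFC compactness gives the solution in $V$). The mechanism is the standard one: by Theorem \ref{thm: dp}, failure of totally symmetric polymorphisms is exactly failure of symmetric solutions to the canonical arc-consistent instance.

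The main obstacle is the construction of this instance. It must be a structure with solutions in $V$ whose automorphism group acts so that any solution fixed by the group induces a TS polymorphism of arity at least $|D|$. I would first try the following: take a finite instance $\mathcal Y$ that has a solution but admits no solution constant on the $\mathrm{Sym}(n)$-orbits coming from $\mathcal D^n$ modulo symmetries. For example, let $\mathcal Y$ be $\mathcal D^n$ and quotient by the $S_n$-action only through the atoms. Then check that any HS solution restricted to a generic copy factors through that quotient, and so yields a TS polymorphism, a contradiction.
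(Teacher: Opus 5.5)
Your argument for (1)$\Rightarrow$(2) is correct, but it takes a different route from the paper. The paper runs the arc-consistency refinement $F^{\mathcal X}_m$ and proves each $F^{\mathcal X}_m(x)$ is witnessed by a finite substructure. You instead take $U(x)$ to be the set of values extending to a solution of every finite substructure containing $x$, and let finiteness of $D$ make this directed intersection stabilize. Both are choice-free; yours is shorter and skips the induction.

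The direction (2)$\Rightarrow$(1), however, has a genuine gap, and the proposed construction cannot work in general. You claim every finite substructure of your instance is solvable ``because finite arc-consistent pieces are solvable in $V$''. This fails exactly when $\mathcal D$ is not width-1, since $\mathcal X_0$ is itself a finite arc-consistent instance with no solution. (Likewise, ``no TS polymorphism of arity $N$'' excludes only arities $\geq N$, not all arities $\geq |D|$.)

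A disjoint-union-of-copies permutation model really requires a finite \emph{solvable} instance $\mathcal Y$ and a finite group $H$ of automorphisms of $\mathcal Y$, through which $G$ acts on each copy, such that $\mathcal Y$ has no $H$-invariant solution. But suppose $\mathcal D$ has a symmetric polymorphism $g$ of arity $|H|$. Then for any solution $s$ of $\mathcal Y$, the map $y\mapsto g\bigl((s(hy))_{h\in H}\bigr)$ is an $H$-invariant solution. Using it on every copy gives a $G$-invariant, hence hereditarily symmetric, solution. This is why your $\mathcal D^n$/$S_n$ idea only ever yields symmetric, not totally symmetric, polymorphisms.

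The difference is real. On $D=\{-1,0,1\}$ take $N=\{(d,-d):d\in D\}$ and $T=\{(a,b,c)\in D^3: a+b+c\geq 1\}$. The maps $(x_1,\dots,x_n)\mapsto \mathrm{sgn}(x_1+\cdots+x_n)$ are symmetric polymorphisms of every arity. Yet there is no TS polymorphism of arity $\geq 3$: $N$ forces $f^*(\{-1,1\})=0$, and then the columns $(-1,1,1),(1,-1,1),(1,1,-1)$ of $T$ force $(0,0,0)\in T$. So this structure is not width-1, but no instance of the kind you sketch can witness the failure of its compactness principle.

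The paper avoids finite symmetry here altogether. By the fourth author's theorem, ZF+DC proves that every non-width-1 $\mathcal D$ has a Borel instance whose finite subinstances are all solvable but which has no Baire measurable solution. In Shelah's model of ZF+DC+``every set has the Baire property'', that instance violates $K_{\mathcal D}$. Your proof is missing an argument of this kind, one that detects the absence of TS polymorphisms rather than of symmetric ones (the paper's uses Baire category).
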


\begin{proof}
    $(1) \Rightarrow (2)$. Let $\mathcal{D}$ be any width-1 finite relational structure. For any instance $\mathcal{X}$ of $\mathcal{D}$, we will define a maximal witness to arc-consistency. 

    For $f: \s X\rightarrow \s P(\s D)$, say that a value $d\in \s D$ is good for $x\in \s X$ in $f$ if $d\in f(x)$ and for any constraint $\vec e\in R^\s X$ with $e_i=x$, $d$ can be extended by values from the $f(e_j)$'s to some $\vec a\in R$. More formally, $d$ is good for $x$ in $f$ if
    \[(\forall \mbox{ relations }  R^{\s D})(\forall \vec e\in R^{\s X})\;e_i=x\rightarrow (\exists \vec a\in R^{\s D}\cap \Pi_{j} f(e_j) ) \;a_i=d.\]
    
    We define a sequence of functions $F^{\mathcal X}_m: \mathcal X \to \mathcal P(\mathcal D) $ inductively:
    \begin{enumerate}
        \item $F^{\mathcal X}_0(x) = D$

        \item For $m\geq0$, 
       \[F^{\mathcal X}_{m+1}(x)=\{d\in D: d\mbox{ is good for }x\mbox{ in }F^{\s X}_m\}.\]

        \item $F^{\mathcal X}_{\infty}(x) = \cap_{i\in \mathbb N} F^{\mathcal X}_i(x).$
    \end{enumerate}
    Note that $F^\mathcal{X}_{m+1} (x)\subseteq F_m^\mathcal{X}(x).$ Since $F_0^\mathcal X(x)$ is finite, there exists some $M$ such that $m\geq M \implies F_m^\mathcal X(x) =F_M^\mathcal X(x) = F_\infty ^\mathcal{X}(x).$

    \begin{lemma}
        Every $d\in F^{\s X}_\infty(x)$ is good for $x$ in $F^{\s X}_\infty(x)$.
    \end{lemma}
    \begin{proof}
        As before let $M_j$ be such that $m \geq M_j$ implies $F_{M_j}^{\mathcal{X}}(e_j) = F_m^{\mathcal{X}}(e_j) = F_{\infty}^{\mathcal{X}}(e_j)$ for all $m\geq M_j$, and let $M$ be the max of the (finitely many) $M_j$. 
        
        Fix $d\in F^{\s X}_{\infty}(x)$. Since $d\in F^{\s X}_{M+1}$, $d$ is good for $x$ in $F^{\s X}_M(x)$. So, for any $\vec e\in R^{\s X}$ with $e_i=x$, there is some $\vec a\in R^{\s D}\cap \Pi_j F^{\s X}_M(e_j)=R^{\s D}\cap \Pi_j F^{\s X}_{\infty}(e_j)$. Thus, $d$ is also good for $F^{\s X}_{\infty}$.
    \end{proof}

    \begin{lemma}
        For any $m\geq0$ and $x\in X$, there is a finite $A\subseteq X$ such that $x\in A$ and $F^{\mathcal{X}}_m(x)=F^{\mathcal{A}}_m(x)$, where $\mathcal{A}$ is the induced substructure on $A$.
    \end{lemma}
    \begin{proof}
        We will proceed by induction on $m$.
        
        Base Case: When $m=0$, $F_{0}^{\mathcal{A}}(x)=D$ for any $A\subseteq X$, thus the claim holds trivially.

        Induction Step: Suppose now that for every $x\in X$ there is finite $A\subseteq X$ with $F^{\mathcal{X}}_m(x)=F^{\mathcal{A}}_m(x)$. Note that for any $x\in A\subseteq X$ we have $F^{\mathcal{X}}_{m+1}(x)\subseteq F^{\mathcal{A}}_{m+1}(x)$, so it suffices to find finite $A$ with $F^{\mathcal{A}}_{m+1}(x)\subseteq F^{\mathcal{X}}_{m+1}(x)$.
        
        For each $v\in F_{m}^{\mathcal X}(x)\setminus F_{m+1}^{\mathcal X}(x)$, 
        there exists a relation $R$ of $\s D$ and some $\vec e \in  R^{\mathcal X}$ with $x=e_j$ satisfying that for each $\bar{a}\in R$ with $a_j=v$ we have $\bar{a}\notin\prod_{k}F^{\mathcal{X}}_m(e_k)$. Since there are only finitely many such $v$, we can pick $\bar{e}^v$ for each $v\in F_{m}^{\mathcal X}(x)\setminus F_{m+1}^{\mathcal X}(x)$ with $\bar{e}^v\in R^{\mathcal{X}}_v$ but $v$ cannot be extended to $\bar{a}\in R_v$. Again there are only finitely many $v, k$ so by induction, we can pick a finite $A^v_k\subseteq X$ with $F^{\mathcal{A}^v_k}_m(e^v_k)=F^{\mathcal{X}}_m(e^v_k)$. Let $$A=\{x\}\cup\bigcup_{v,k} A^v_{k}$$
        and note that for each $v\notin F^{\mathcal{X}}_{m+1}(x)$, we have $e^v_k\in A,F^{\mathcal{A}}_m(e^v_k)\subseteq F^{\mathcal{A}^v_k}_m(e^v_k)=F^{\mathcal{X}}_m(e^v_k)$ so that $F^{\mathcal{A}}_m(e^v_k)=F^{\mathcal{X}}_m(e^v_k)$, so $v\notin F^{\mathcal{A}}_{m+1}(x)$ by construction of $\bar{e}^v$. Thus $F^{\mathcal{A}}_{m+1}(x)=F^{\mathcal{X}}_m(x)$, proving the lemma.
    \end{proof}
    This lemma then extends to $F_\infty^\mathcal X$ since it is the intersection of all $F_m^\mathcal X.$

    \begin{lemma}
        If every finite substructure has a solution, then $F_\infty^\mathcal X(x) \neq \emptyset$ for all $x.$
    \end{lemma}

    \begin{proof}
    Suppose towards a contradiction that there is $x \in \mathcal X$ such that $F_\infty^\mathcal X(x)= \emptyset$.
    Then for some $m\in\mathbb{N}$ we must have $F^{\mathcal{X}}_m(x)=\varnothing$ since $D$ is finite. By the previous lemma, there is a finite $A\subseteq X$ such that $F^{\mathcal{A}}_m(x)=\varnothing$. Note now that $\mathcal{A}$ is a finite substructure of $\mathcal{X}$, but there is no solution to $\mathcal{A}$, which is impossible.
    \end{proof}
    So, if $\s X$ is an arc-consistent instance of $\s D$, then $F_\infty^\mathcal X$ is an arc-consistency witness for $\s X$. Thus by Theorem \ref{thm: dp}, $\s X$ has a solution. This proves $K_{\mathcal{D}}$ in \zf.

    $(2)\Rightarrow(1)$. Suppose that $\mathcal D$ is not width-1. The fourth author showed that \zf+DC proves there is a Borel instance $\mathcal X$ of $\mathcal D$ with every finite subinstance solvable but without a Baire measurable solution \cite[Thm 5.7]{thornton2022algebraicapproachborelcsps}. Shelah showed \zf+DC+``all sets are Baire measurable'' is consistent relative to \zf\cite{shelah1984can}. Thus, in Shelah's model, $\mathcal X$ is a counterexample to $K_{\s D}$, so ZF+DC does not prove $K_{\s D}$.
\end{proof}

During the drafting of this article, Tardif also claimed the above result. However, Tardif's construction relies on measure theory and an unpublished result of Solovay \cite{tardif}. It is unclear if his proof requires an inaccessible cardinal. Our proof also has another consequence.  K\'atay, T\'oth, and Vidny\'anszky asked how the relations between compactness principles change under DC. It follows from our result that the provable structures remain the same.

\begin{corollary}[Con(\zf)]
    $\zf\vdash K_{\mathcal D}$ if and only if \zf+DC $\vdash K_{\mathcal D}$
\end{corollary}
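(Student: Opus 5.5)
The corollary follows from the theorem just proved, using that both directions of its proof pass through $\zf$+DC. The forward implication is immediate: $\zf+\text{DC}$ extends $\zf$, so $\zf\vdash K_{\s D}$ gives $\zf+\text{DC}\vdash K_{\s D}$. No consistency hypothesis is needed here.

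For the converse I argue by contraposition. Suppose $\zf\nvdash K_{\s D}$. By the $(1)\Rightarrow(2)$ direction of the preceding theorem, $\s D$ is not width-1. By Theorem \ref{thm: dp}, this is equivalent to $\s D$ lacking totally symmetric polymorphisms of arbitrarily high arity. I then repeat the argument from $(2)\Rightarrow(1)$, keeping track of which theory each step uses.

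\begin{itemize}
\item By \cite[Thm 5.7]{thornton2022algebraicapproachborelcsps}, $\zf+\text{DC}$ proves that there is a Borel instance $\s X$ of $\s D$ in which every finite substructure is solvable but which has no Baire measurable solution.
\item By Shelah's theorem, assuming Con($\zf$), the theory $\zf+\text{DC}+$``all sets are Baire measurable'' has a model.
\item In that model $\s X$ exists, and any solution would be Baire measurable, so $\s X$ has no solution. Hence $K_{\s D}$ fails there.
\end{itemize}

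So that model satisfies $\zf+\text{DC}+\neg K_{\s D}$, and therefore $\zf+\text{DC}\nvdash K_{\s D}$.

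The only delicate point is bookkeeping. The counterexample must be produced inside $\zf+\text{DC}$ rather than full ZFC, and Shelah's consistency result must hold relative to $\zf$ alone, with no large cardinals. The argument above relies on the cited results in exactly that form.
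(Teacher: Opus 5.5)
Your proof is correct and matches the paper's argument. The forward direction is trivial. For the converse, you combine the contrapositive of $(1)\Rightarrow(2)$ with the observation that the paper's $(2)\Rightarrow(1)$ argument already produces a model of $\zf+\text{DC}+\neg K_{\mathcal D}$: Thornton's Borel counterexample, provable in $\zf+\text{DC}$, interpreted in Shelah's model where all sets are Baire measurable.
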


\subsection{2-coloring, 2\sat, and Linear Equations} \label{sec: boolean analysis}

From the analysis of Post's lattice and results from the previous section, we know that there are at most 4 nontrivial compactness principles for structures on 2 elements: $K_{K_2}$, $K_{2\sat}$, $K_{3\sat}$, and $K_{3\text{LIN}2}$. From K\'{a}tay, T\'{o}th, and Vidny\'{a}nszky, we know that $K_{3\sat}$ is strictly stronger than the other three. We will show that $K_{K_2}$ is strictly weaker than the rest, and that $K_{2\sat}$ does not imply $K_{3\text{LIN}2}.$ Thus, there are exactly $4$ nontrivial principles, and only the relation between $K_{2\sat}$ and $K_{3\text{LIN}2}$ remains mysterious. Note that $K_{K_2}$ is equivalent to $\ac(2)$ (choice for 2-element sets) \cite{jech2008axiom}.

\begin{theorem}[Con(\zf)] \label{thm: boolean structures}
    $\zfa+K_{K_2}$ does not prove $K_{\mathbb{F}_2}$ or $K_{2\sat}$.
\end{theorem}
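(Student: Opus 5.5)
We use Theorem~\ref{tool4}. It suffices to find, in ZFC, a permutation group $G$ on a set of atoms $A$ such that two things hold. First, every $\text{HS}_G$ instance of $K_2$ that has a solution has an $\text{HS}_G$ solution, so $\text{HS}_G\models K_{K_2}$. Second, there are $\text{HS}_G$ instances of $\mathbb{F}_2$ (or of $3\text{LIN}2$, which has the same compactness principle) and of $2\sat$ that are solvable in $V$ but have no $\text{HS}_G$ solution. The transfer to $\zf$ is then the forcing argument of Appendix~\ref{appendix: zf}.

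\textbf{The $K_{K_2}$ half.} I would make every element of $G$ have odd order, concretely exponent $3$, for example a direct sum of copies of an elementary abelian $3$-group. Let $\mathcal X\in\text{HS}_G$ be a $K_2$-instance with a solution in $V$, and fix a finite support $E$ for $\mathcal X$. The group $\fix(E)$ permutes the connected components of $\mathcal X$. If a component is fixed setwise, then $\fix(E)$ acts on its two color classes. That two-element set is permuted by a group of exponent $3$, so the action is trivial: each color class is fixed. A solution $f$ picks one side per component, and $\pi(f)$ is the solution obtained by transporting sides along $\pi$. To get a symmetric solution, choose $f$ on a set of orbit representatives of components under $\fix(E)$ and transport it. This is well defined because the stabilizer of a component acts trivially on its sides. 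The resulting $f$ is symmetric with support $E$, and it is hereditarily symmetric since its elements are pairs $(x,i)$ with $x$ already HS.

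\textbf{The failure of $K_{\mathbb{F}_2}$ and $K_{2\sat}$.} Here a warning is needed. For a \emph{finite} odd-order group acting on a finite system of affine equations over $\bb F_2$, averaging $c+gc+g^2c$ produces an invariant solution, because $3$ is invertible mod $2$. So the counterexample must use the fact that $\fix(E)$ is infinite and acts with infinite orbits on the variables. My plan is to take the atoms to be $\bigsqcup_n V_n$, with each $V_n$ a copy of the countable $\bb F_3$-vector space $\bb F_3^{(\omega)}$, and $G$ the direct sum of translation groups. Then $\fix(E)$ contains full translation groups on all copies avoiding $E$, and an $\text{HS}_G$ solution must be translation-invariant on cofinitely many copies, i.e.\ constant there. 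It then suffices to put on each $V_n$ a translation-invariant system that forbids constant assignments but is satisfiable in $V$:
\begin{itemize}
\item[(a)] an affine system for $\mathbb{F}_2$, using equations indexed by translation-invariant configurations such as parallelograms or lines, with right-hand sides chosen so that constants fail;
\item[(b)] a $2\sat$ system, using implications $x_a\to\neg x_b$ and $\neg x_a\to x_b$ along a translation-invariant family of pairs $\{a,b\}$, making the variables behave like a ``choose one of each pair'' system in which constants are excluded.
\end{itemize}
In each case I would check two things. Solvability in $V$ comes from an explicit non-invariant solution or from compactness on finite pieces. Failure of symmetric solutions comes from the constancy forced by $\fix(E)$. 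For $2\sat$, one must also make sure that these pairs do not reintroduce an element of order $2$ acting on a pair. This is the point of having translations of exponent $3$ while the pairs come from a non-group-theoretic structure, for instance a fixed partition of each $V_n$ into pairs that is invariant only under a finite-index subgroup. If that conflicts with symmetry, I would instead enlarge $A$ with extra atom types carrying $\bb Z/3$-rotations.

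\textbf{The main obstacle.} The hard part is the combinatorics of (a) and (b): finding translation-invariant, hence $\text{HS}_G$, systems that have a solution in $V$ but admit no solution constant on a cofinite set of copies. The averaging obstruction shows this genuinely needs the infinitude of the orbits. I expect the trickiest verification to be that the chosen $\mathbb{F}_2$ system is consistent; I would settle this by exhibiting an explicit solution, for example a character-like function $V_n\to\bb F_2$ defined from a basis.
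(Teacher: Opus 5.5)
Your $K_{K_2}$ half is correct: a group of exponent $3$ acts trivially on any two-element set it stabilizes, so transporting a choice along $\fix(E)$-orbit representatives gives a hereditarily symmetric solution. The other half, however, cannot be carried out for \emph{any} choice of instances. The gap lies in the choice of group, not in the combinatorics of (a) and (b).

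Suppose every finitely generated subgroup of $G$ is finite of odd order. This holds for every group of exponent $3$ by Burnside's theorem, and trivially for your abelian examples. Then $\text{HS}_G\models K_{\mathbb F_2}\wedge K_{2\sat}$. Your remark that averaging needs the group to be finite overlooks that compactness in the ground model only ever involves finitely many group elements at a time.

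Here is the argument. Let $\mathcal Y\in\text{HS}_G$ be an instance solvable in $V$, with support $E$, and put $H=\fix(E)$. Add to $\mathcal Y$ the equations $x_v=x_{hv}$ for all variables $v$ and all $h\in H$. A finite part of this enlarged system involves finitely many constraints $\Sigma_0$ of $\mathcal Y$ and finitely many $h_1,\dots,h_m\in H$.
\begin{enumerate}
\item The group $K=\ip{h_1,\dots,h_m}$ is finite of odd order.
\item $K\cdot\Sigma_0$ is a finite $K$-invariant subsystem of $\mathcal Y$, so it has a solution $s$.
\item A $K$-invariant solution is then $\sum_{k\in K}k\cdot s$ for $\mathbb F_2$, since an odd sum of solutions of an affine system is a solution. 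For $2\sat$ it is $\maj_{|K|}\bigl((k\cdot s)_{k\in K}\bigr)$, since odd-arity majority is a polymorphism of $2\sat$.
\end{enumerate}
By compactness in $V$ the enlarged system is solvable. Its solution is $H$-invariant and hence lies in $\text{HS}_G$.

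You can already see this in your single-copy setting. The translation-invariant family $x_w\neq x_{w+d}$ contains the odd cycle $w,w+d,w+2d$. In general, any finite subsystem that rules out constants is refuted by averaging over the finite subgroup generated by the differences of its variables. Adding more atoms with $\mathbb Z/3$-rotations changes nothing.

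What is missing is a mechanism for $K_{K_2}$ that does not come from odd torsion. The paper takes $G=\aut(\bb B,\leq)$ for the Fra\"iss\'e limit of finite linearly ordered Boolean algebras. This group is torsion-free. $K_{K_2}$ holds there because Theorem~\ref{thm: wei} yields least supports, so every set in the model can be linearly ordered.

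The failures of $K_{2\sat}$ and $K_{\mathbb F_2}$ on the natural instances on $\bb B$ come from a partition $x_0,\dots,x_3$ of $1$ together with $\sigma_1,\sigma_2,\sigma_3\in\fix(\bb E)$ satisfying $\sigma_1(x_0)=x_1$, $\sigma_2(x_0)=x_2^c$ and $\sigma_3(x_3)=x_3^c$. No Boolean automorphism of order $3$ can play the role of $\sigma_3$: $\sigma(x)=x^c$ forces $\sigma^2(x)=x$, and hence $\sigma^3(x)=x^c\neq x$.
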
\begin{proof}
    First, we will construct a model of ZFA where $K_{K_2}$ holds but neither $K_{\mathbb{F}_2}$ nor $K_{2\sat}$ does. To do so we consider a structure $(\mathbb{B},\leq)$ which consists of a Boolean algebra $\mathbb{B}=(B,0^{\mathbb{B}},1^{\mathbb{B}},\cap^{^{\mathbb{B}}},\cup^{^{\mathbb{B}}},\cdot^{c^\mathbb{B}})$ and a linear order $\leq$ on its underlying set such that $0^{\mathbb{B}}$ is a $\leq$-minimum and $1^{\mathbb{B}}$ is a $\leq$-maximum and which is generic for such structures (i.e., it is the Fra\"iss\'e limit of finite linearly ordered Boolean algebras). Then by taking a model of ZFA with $B$ the set of atoms, we construct a permutation model using the automorphism group $G$ of the structure $(\mathbb{B},\leq)$, and show it has the desired properties.

    Let $\mathcal{K}$ be the class of structures $\mathbb{A}=(A,0^{\mathbb{A}},1^{\mathbb{A}},\cap^{^{\mathbb{A}}},\cup^{^{\mathbb{A}}},\cdot^{c^\mathbb{A}},\leq_A)$ where $A$ is a finite set, $(A,0^{\mathbb{A}},1^{\mathbb{A}},\cap^{^{\mathbb{A}}},\cup^{^{\mathbb{A}}},\cdot^{c^\mathbb{A}})$ is a non-trivial Boolean algebra, and $(A,\leq_A)$ is a total order satisfying $\min_{\leq_\bb A}\bb A=0^\bb A$ and $\max_{\leq_{\bb A}}\bb A= 1^A$.
    
    We will construct $(\mathbb{B},\leq)$ as the Fra\"iss\'e limit of the class $\s K$. So $(\bb B, \leq)$ satisfies the following properties:
    \begin{enumerate}
        \item For any non-trivial finite Boolean algebra $\mathbb{B}_0\in \s K$ there is an embedding $(\mathbb{B}_0,\leq_0)\to(\mathbb{B},\leq)$, i.e. an injective function $\mathbb{B}_0\to\mathbb{B}$ which preserves the Boolean algebra structures as well as the linear order.
        \item For any two finite ordered subalgebras $(\mathbb{B}_0,\leq_0),(\mathbb{B}_1,\leq)\subseteq(\mathbb{B},\leq_1)$, and any isomorphism $f:(\mathbb{B}_0,\leq_0)\to(\mathbb{B}_1,\leq_1)$, there is an automorphism $g:(\mathbb{B},\leq)\to(\mathbb{B},\leq)$ which extends $f$, meaning $g(a)=f(a)$ for all $a\in\mathbb{B}_0$.
    \end{enumerate}
    By Fra\"iss\'e's theorem, this limit exists and has the desired properties as soon as $\s K$ satisfies the joint embedding and amalgamation properties. In fact, the joint embedding property follows from the amalgamation property: the 2-element Boolean algebra with its only compatible linear order embeds into any non-trivial Boolean algebra, so amalgamating two structures over this common substructure proves the joint embedding property.

    Suppose that we have three linearly ordered finite Boolean algebras, $(A,\leq_A),(B,\leq_B),$ and {$(C,\leq_C)$} and two embeddings $f:C\hookrightarrow B,g:C\hookrightarrow A$. We wish to find a structure $(D,\preceq_D,\leq_D)$ and embeddings $F:A\hookrightarrow D,G:B\hookrightarrow D$ such that the diagram commutes:\\
    \begin{center}\begin{tikzcd}
    & A \arrow[dr, hook, dashed, "F"] & \\
    C \arrow[ur, hook, "f"] \arrow[dr, hook, "g"] & & D \\
    & B \arrow[ur, hook, dashed, "G"] &
    \end{tikzcd}\\ \end{center}

    We can build a push-out of Boolean algebras $D=A\oplus_C B$ (see Definition \ref{dfn:boolean algebra pushout} in Appendix \ref{appendix: WEI}). Let $F, G$ be the natural maps from $A$ and $B$, respectively, into the push-out. We have a linear orders on $F[A]\subseteq D$ and $G[B]\subseteq D$ given by pushing forward the orders on $A$ and $B$. These agree on the intersection $F[A]\cap G[B]=F[C]$ by the assumption that $f$ and $g$ are embeddings in $\s K$. By Lemma \ref{lem:useful lemma 1} in Appendix \ref{appendix: WEI}, this means we can extend these pushforwards to a linear order on $D$ which makes the diagram commute in the category of linearly ordered Boolean algebras. Thus the class $\s K$ has the amalgamation property.

    Now we may consider the Fra\"iss\'e limit $(\mathbb{B}, \leq^B)$ where $\bb B$ is a countable atomless boolean algebra, with operations and relation $\sqcup^B,\sqcap^B,\cdot^B,{0^B}^B,{1^B}^B$, a relation $x\sqsubseteq^B y:\lra x\sqcap^B y=x$, and $\leq^B$ is a linear order. For notational convenience, we shall drop the superscript (unless there is a risk of ambiguity) and also write $x\triangle y=(x\sqcup y)\sqcap (x\sqcap y)^c$ for symmetric difference.\par
    Fix a model of ZFA whose set of atoms is (the underlying set of) $(\mathbb{B},<)$. Let $G$ be the group of permutations on this set given by the automorphisms of $(\mathbb{B},<)$, and let $\mathcal{M}$ be the permutation model associated to $G$.

    First, we claim that $\mathcal{M}\models K_{K_2}$. More generally, we will show that every set can be linearly ordered in $\s M$; therefore, choice for finite sets holds. By Theorem \ref{thm: wei} in Appendix \ref{appendix: WEI}, for $\bb A, \bb C\subseteq \bb B$ subalgebras,
    \[\fix(\bb A\cap \bb C)=\ip{\fix(\bb A),\fix(\bb B)}\] (that is, the stabilizer of the intersection of subalgebras is generated by the union of their stabilizers). It follows that, for any set $X\in \s M$, there is a smallest (in terms of containment) subalgebra $\supp(X)\subseteq \bb B$ so that $\fix(\supp(X))\subseteq \sym(X)$. The rest is a standard argument, which will outline. As in \cite{jech2008axiom}, the map $X\mapsto \supp(X)$ is hereditarily symmetric, and for any set $X\in \s M$, the map
    \[\iota: X\rightarrow [\bb B]^{<\infty}\times X/\sym(X)\]
    \[S\mapsto (\supp(S), \sym(X)\cdot S)\] is hereditarily symmetric and injective. In $\s M$, $X/\sym(X)$ is in bijection with an ordinal and $[\bb B]^{<\infty}$ has a linear order (the lexicographic order induced by $\leq$), so $X$ has a linear order. 

    Now we will show $\mathcal{M}\models\neg K_{2\sat}$. Consider the instance $\mathcal{X}$ of $K_{2\sat}$ whose set of variables is $\mathbb{B}$, and whose relations are given by $R_{\rightarrow}(x,y)$ if and only if $x\leq y$, and $R_\neg(x,x^c)$ for all $x\in\mathbb{B}$.

    This is indeed an HS instance of 2\sat: the underlying set is HS since it is exactly the set of atoms, and therefore the whole set is stabilized by any element of $G$. Furthermore, since elements of $G$ are automorphisms and therefore preserve the Boolean algebra structure as well, and since any Boolean algebra homomorphism must preserve the $x\leq y$ relation, we see the relations for this instance are also preserved by any element of $G$. In the ground model that any the characteristic function of any ultrafilter on $\mathbb{B}$ yields a solution.
    
    We want to show that, on the other hand, no hereditarily symmetric solution exists for $\mathcal{X}$. Suppose towards the contrary that $f:\mathcal{X}\to\{0,1\}$ is a solution with $\fix(E)\subseteq \sym(f)$ for some finite subalgebra $\bb E\subseteq \bb B$. Suppose $\mathbb{E}$, has order atoms $E'=\{e_i:i<n\}$, and note that any automorphism fixing $E$ must also fix $\bb E$. 
    I claim that we can find $x_0,x_1,x_2,x_3\in \bb B$ so that $x_i\sqcap x_j=0$ for each $i\not=j$, $\bigsqcup_{i=0}^3 x_i=1$, and so that there are $\sigma_1,\sigma_2,\sigma_3\in \fix(\bb E)$ with \[\sigma_1(x_0)=x_1, \sigma_2(x_0)=x_2^c,\mbox{ and } \sigma_3(x_3)=x_3^c.\] Let $B_4$ be the 4-atom Boolean algebra with atoms $y_0,y_1,y_2$ and $y_3$. Let $h, g$ be the canonical algebra embeddings of $B_4$ and $\bb E$ respectively into $\bb A:=B_4\oplus \bb E$. Then for $i\not=j$, $\ip{h(y_i), g[\bb E]}\cap \ip{h(y_j),g[\bb E]}=g[\bb E].$ By Lemma \ref{lem:useful lemma 2} in Appendix \ref{appendix: WEI}, any choice of linear orders on $\ip{h(y_i), g(\bb E)}$ that agree on $g(\bb E)$ extends to a linear order on $\bb A$. In particular there is a linear order $\leq$ on $\bb A$ with the following properties:
    \begin{enumerate}
        \item $h(y_0)\leq h(y_0)^c, h(y_1)\leq h(y_1)^c, h(y_2)\geq h(y_2)^c, h(y_3)\geq h(y_3)^c$
        \item for all $i,j$, $(\ip{h(y_i), g[\bb E]},\leq)\cong (\ip{h(y_j), g[\bb E]},\leq)$ via isomorphisms that fix $g[\bb E]$
        \item $g: (\bb E, \leq)\rightarrow(\bb A, \leq)$ is an embedding of linearly ordered algebras.
    \end{enumerate} By the universality of Fra\"iss\'e limits, there is an embedding $\phi:\bb A\rightarrow\bb B$ with $\phi\circ g=\id_\bb E$. Set $x_i=\phi\circ h(y_i)$. By ultrahomogeneity,  the isomorphisms from item $(2)$ above lift to automorphisms of $\bb B$ in $\fix(\bb E)$ as desired. 
    
    Now, by symmetry of $f$, we have that \[f(x_0)=f(x_1)=1-f(x_2)=1-f(x_3).\] If $f(x_0)=0$, then $f(x_2)=f(x_3)=1$ and $f(x_3^c)=0$. But $x_2\sqsubseteq x_3^c$, so this is a contradiction. On the other hand, if $f(x_0)=1$, then $f(x_1^c)=0$, but $x_0\sqsubseteq x_1^c$, again we have a contradiction. Thus $\s X$ has no symmetric solution.
    
    \par

    We now proceed to show that $\mathcal{M}\models\neg K_{\mathbb{F}_2}$. Consider the instance $\s Y$ of $\bb F_2$ with domain $B$ and linear equations with variables in $\bb B$ of the form $1^B=1, 0^B=0$, and
    \[x_1+x_2+...+x_n=y\] whenever $x_1\,\triangle\, ...\,\triangle\,x_n=y$. Just as before, $\s Y$ is invariant under $G$, so $\s Y\in \s M$. And as before, an ultrafilter gives a solution to $\s Y$ in the ground model.
    
    Now, we show that $\s Y$ has no symmetric solution. Suppose towards contradiction that $\s Y$ has a symmetric solution $f$, and let $\bb E$ be the algebra generated by a support for $f$. As before, we can find a partition $x_0,x_1,x_2,x_3$ in $\bb B$ with $x_1, x_2^c, x_3^c\in \fix(\bb E)\cdot x_0.$ Since $x_i\,\triangle\, x_i^c=1^B$, $f(x_i)+f(x_i)=1.$ It follows that $f(x_0)=f(x_1),$ and $f(x_2)=f(x_3)$, so \[f(1^B)=f(x_0\, \triangle \,x_1\,\triangle\, x_2\,\triangle\, x_3)=f(x_0)+f(x_0)+f(x_2)+f(x_2)=0\] But, this is a contradiction.
    
\end{proof}

To separate $K_{\bb F_2}$ from $K_{2\sat}$, we can use a more familiar model.

\begin{thm}[Con(\zf)]
    $\zf+K_{2\sat}$ does not prove $K_{\bb F_2}$
\end{thm}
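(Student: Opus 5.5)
We work in a permutation model and then transfer to $\zf$ via Appendix \ref{appendix: zf}, as with the previous theorem. The "more familiar model" is the basic Fraenkel-style model built on a vector space over $\bb F_2$. Take the set of atoms $A$ to be a countably infinite-dimensional vector space $V$ over $\bb F_2$, let $G=\aut(V)=GL(V)$, and let $\s M$ be the permutation model with finite supports. Because the group preserves the linear structure, supports may be taken to be finite-dimensional subspaces $E\subseteq V$.

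\emph{Failure of $K_{\bb F_2}$.} Let $\s Y$ be the instance with variables $V$, the constraint $x_0=0$ for the zero vector, and the constraint $x+y+z=0$ whenever $x+y+z=0$ in $V$. It is $G$-invariant, so $\s Y\in\s M$. In the ground model any linear functional $V\to\bb F_2$ that is nonzero solves it, and every finite subinstance is solvable. A solution of $\s Y$ is exactly a linear functional, so we show there is no nonzero symmetric one. Suppose $f$ is supported by a finite-dimensional $E$. If $f$ vanishes on $E$, then either $f=0$, or there is $v\notin E$ with $f(v)=1$; choosing $\sigma\in\fix(E)$ with $\sigma(v)=v+w$ for some $w\notin E+\langle v\rangle$ gives $f(w)=0$. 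Since every vector outside $E$ is in the $\fix(E)$-orbit of $v$, $f$ is constant on $V\setminus E$, and linearity then forces $f=0$ there. In general, $\fix(E)$ acts transitively on $v+E$-cosets outside $E$ (up to translation), and applying $f(\sigma v)=f(v)$ with $f(v+w)=f(v)+f(w)$ for $v,w\notin E$ independent mod $E$ gives $f=0$ off $E$. Hence $f$ vanishes on $V$, because $V\setminus E$ spans $V$. Taking the trivial solution $0$ as allowed is then a problem, so we add the constraint $f(v)=1$ for all... In fact, to rule out the trivial solution we should instead use the \emph{affine} version: take atoms indexing the nonzero vectors and require $x+y+z=1$ for distinct triples summing to $0$. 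Then a solution is $f(v)=1+\ell(v)$ with $\ell$ linear, and the symmetry argument again forces $\ell$ to be constant on the complement of $E$, hence $\ell=0$ off $E$ and so everywhere. That makes $f\equiv1$, which violates $x+y+z=1$ (it gives $1+1+1=1$, fine) — so the parity must instead be chosen so that the constant solutions fail. Fixing this bookkeeping (choosing $\s Y$ so that its solutions correspond to nonzero functionals or to complements of hyperplanes) is routine.

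\emph{$K_{2\sat}$ holds in $\s M$.} This is the main step. Given an HS instance $\s X$ of $2\sat$ with support $E$ in which every finite subinstance is solvable, the goal is to produce a $\fix(E)$-invariant solution. The plan is to use the structure of $2\sat$: $\pol(2\sat)=\ip{\maj}$, and the instance is determined by its implication graph. Compute the implication closure. A literal $x$ is forced when $\neg x\to x$; from finite satisfiability, no $x$ has both $x\to\neg x$ and $\neg x\to x$. The remaining variables split into strongly connected classes of the implication graph, and an orbit-by-orbit choice is needed. We use the fact that in this model every set of orbits is well-orderable, together with the observation that the stabilizer structure satisfies a "majority" property: for each $\fix(E)$-orbit $O$ of undetermined variables, we choose a consistent truth value for $O$. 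The hard part is showing that assigning a single value to an entire orbit is consistent. The obstacle would be an orbit containing both $x$ and some $y$ with $x\to\neg y$. To exclude this, we apply the majority polymorphism to three finite solutions twisted by group elements, using that $\fix(E)$-orbits on finite tuples have a free enough "independence" so that majority of translates yields an invariant assignment on each finite piece. Compactness then reassembles these pieces in the ground model, with invariance maintained orbit by orbit. I expect this orbit-consistency step to be the crux.
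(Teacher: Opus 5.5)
\textbf{The gap is in the $2\sat$ half: in your model $K_{2\sat}$ is false.} The model is atoms a countably infinite-dimensional $\bb F_2$-space $V$, group $\mathrm{GL}(V)$, finite supports. So the step you flagged as the crux cannot be carried out by any argument.

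Fix $e_0\in V\setminus\{0\}$. Let $\s X$ be the $2\sat$ instance on $V$ with $R_{\neg}(v,v+e_0)$ for all $v\in V$, and with $R_{\rightarrow}$ and $True$ empty.
\begin{itemize}
\item $\s X$ is supported by $\{e_0\}$.
\item Every finite subinstance is a partial matching, hence solvable.
\item Suppose $f$ were a solution supported by a finite-dimensional subspace $E\ni e_0$, and take $v\notin E$. The map that is the identity on $E$ and sends $v\mapsto v+e_0$ is an involution of $E\oplus\ip{v}$. Extending it by the identity on a complement gives $\sigma\in\fix(E)$ that swaps $v$ and $v+e_0$. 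This forces $f(v)=f(v+e_0)$, violating $R_\neg$.
\end{itemize}
Equivalently, $\ac(2)$ fails in this model, and $K_{2\sat}$ immediately implies $K_{K_2}\equiv\ac(2)$ because $R_\neg$ is the edge relation of $K_2$.

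So the configuration you hoped to exclude really occurs: a single $\fix(E)$-orbit contains both $v$ and $v+e_0$, with $v\to\neg(v+e_0)$. No orbit-wise assignment and no $\maj$-of-translates argument can repair this. The underlying problem is that $\mathrm{GL}(V)$ is too homogeneous: any two vectors outside $E$ are swapped by some element of $\fix(E)$, so $V$ is not even linearly orderable in the model.

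\textbf{What is missing is a model whose symmetries respect a linear order well enough to solve $2\sat$ instances symmetrically.} The paper gets this from two known results.
\begin{itemize}
\item Rorabaugh, Tardif and Wehlau show that the Order Extension Principle implies $K_{2\sat}$: extend $R_{\rightarrow}$ to a linear quasi-order, and make $x$ false exactly when $x<y$ for some $y$ with $R_\neg(x,y)$.
\item Felgner and Truss give a permutation model, built on a countable atomless Boolean algebra $\bb B$ with a generic linear order extending $\sqsubseteq$, in which OEP holds.
\end{itemize}
In that model, the failure of $K_{\bb F_2}$ is witnessed by the symmetric-difference system on $\bb B$ with $x_{0^B}=0$ and $x_{1^B}=1$. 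It is refuted by splitting an order atom of the support, as in Felgner--Truss.

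Your $\bb F_2$ half is essentially correct on its own. Drop the affine detour and add the single constraint $x_{v_0}=1$ for a fixed nonzero $v_0$; the instance is then supported by $\{v_0\}$, and you already showed the only symmetric linear functional is $0$. But this does not help without a model in which $K_{2\sat}$ holds.
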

\begin{proof}
    Rorabaugh, Tardif, and Wehlau showed that $K_{2\sat}$ follows from the Order Extension Principle (which is elsewhere called Szpilrajn's Lemma) \cite{RorabaughTardifWehlau}. Roughly, we can use the usual proof of logical compactness to reduce $K_{\s D}$ to its restriction to quotients of powers of $\s D$ along filters. If $\s X$ is a quotient of a power of $2\sat$, then we can solve $\s X$ by first extending $R_{\rightarrow}^{\s X}$ to a (quasi) linear order $\leq $ and then setting $f(x)=0$ if there is $y$ with $x<y$ and $R_{\neg}(x,y).$

    Now, as in the work of Felgner and Truss \cite{FelgnerTruss}, we can consider the symmetric model $\s M$ associated to the generic pair $(\bb B, \leq)$ where $\bb B$ is a boolean algebra and $\leq$ is linear order extending the partial order $\sqsubseteq$ from $\bb B$. The order extension principle is known to hold in $\s M$, so it suffices to show $\s M\not \vDash K_{\bb F_2}$.

    Again, we use the natural interpretation of $\bb B$ as an instance of $\bb F_2$. We define a system of $\bb F_2$-linear equation $\s X$ with variables $X=\{x_b: b\in \bb B\}$ and equations

    \[x_{0^B}=0,\quad x_{1^B}=1,\quad \mbox{ and }\]
    \[\sum_{i=1}^n x_{b_i}=x_a\mbox{ whenever }b_1\,\triangle\, ...\,\triangle\, b_n=a\] where $x\,\triangle\, y=(x\sqcup y)\sqcap(x\sqcap y)^c.$ Again, an ultrafilter gives a solution in the ground model, so $\s M$ believes that every finite subinstance of $\s X$ has a solution. Let us check that $\s X$ has no symmetric solution. 
    
    Suppose towards contradiction that $f: X\rightarrow \{0,1\}$ is a symmetric solution with support $\bb A$. Without loss of generality, we may assume that $\bb A$ is a subalgebra. Since the order atoms of $\bb A$ sum to $1^B$, there is an order atom $a\in \bb A$ with $f(x_a)=1$. Fix $b\sqsubset a$ not in $\bb A$. As in Felgner and Truss's article, we can find are $b,c,d,d' \in \bb B$ so that \[c,b\sqcup c, d,d'\in \fix(\bb A)\cdot b\] and 
    \[b\sqcup d\sqcup d'=a,\quad b\sqcap d=b\sqcap d'=0^B.\] So, since $\fix(f)\supseteq \fix(\bb A)$, \[f(x_b)=f(x_c)=f(x_{b\sqcup c})=f(x_b)+f(x_c).\] It follows that $f(x_b)=0$. On the other hand, $f(x_b)=f(x_d)=f(x_{d'}).$ Thus, \[f(x_b)=f(x_{b\sqcup d\sqcup d'})=f(x_a)=1.\] This is a contradiction.
\end{proof}

$\,$

\section{Compactness principles for directed cycles}
In this section, we construct an infinite chain and an infinite anti-chain of compactness principles ordered by implication. The anti-chain we will consider is $\{K_{C_p}:p\text{ prime}\}$, the compactness principles for directed cycles of prime lengths. In fact, we show the incomparability of their compactness principles in a strong sense: for each $p$ we will exhibit a model $\mathcal{M}_p$ of ZFA which simultaneously satisfies $K_{C_q}$ for every $q\neq p$ but nevertheless fails $K_{C_p}$. Utilizing this structure of directed cycles, we also will build a strictly increasing infinite chain $\{K_{\mathcal{D}_n}:n\geq 2\}$ related to the disjoint union of cycles $\bigsqcup_{i=2}^n C_i$ and show that $K_{\mathcal{D}_n}$ is equivalent to $\ac(n)$.

\begin{dfn}
    For a directed graph $G=(V,E)$, a \textbf{path} in $G$ is a sequence $p= (v_1,e_1,v_2,e_2,...,e_{n-1},v_n)$ of vertices $v_i$ and edges $e_i$ so that, for each $i$, $e_i=(v_i,v_{i+1})$ or $e_i=(v_{i+1},v_i)$. A \textbf{directed path} is a path so that $(v_{i},v_{i+1})=e_i$ for all $i$. The \textbf{algebraic length} of a path $p$ is $m-k$ where $m$ is the number of indices $i$ such that $(v_i,v_{i+1})=e_i$ and $k$ is the number where the reverse $(v_{i+1},v_i)=e_i$ holds. We say $G$ is\textbf{ weakly connected} if there is a path between any two vertices. A \textbf{component} of $G$ is a maximal non-empty weakly connected induced subgraph.
\end{dfn}

Observe that the coloring of a weakly connected component is completely determined by its value on any vertex, in the sense that if $G$ is connected and $f,g:G\to C_n$ are homomorphisms to $C_n$ with $f(v)=g(v)$ for some $v\in G$, then $f=g$. 

\begin{prop}[\zf] \label{lem: cyclic homs} For every $n$, the restriction of $K_{C_n}$ to weakly connected graphs holds. Moreover, each connected graph admits either $0$ or exactly $n$ homomorphisms to $C_n$.
\end{prop}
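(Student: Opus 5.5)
The plan is to show that a homomorphism from a weakly connected digraph $G=(V,E)$ to $C_n$ is governed by algebraic lengths of paths modulo $n$, so that the solution can be written down explicitly without any choice. Fix a vertex $v_0\in V$. For any homomorphism $h:G\to C_n$ and any path $p$ from $v_0$ to $w$, induction along $p$ gives $h(w)=h(v_0)+\ell(p) \pmod n$, where $\ell(p)$ is the algebraic length: each forward edge adds $1$ and each backward edge subtracts $1$. This immediately yields the uniqueness observation stated just before the proposition. It also shows that $h$ is determined by $h(v_0)$, so there are at most $n$ homomorphisms. Conversely, if $h$ is a homomorphism, then so is $h+c$ for every $c\in\bb Z/n\bb Z$, since $x\mapsto x+c$ is an automorphism of $C_n$. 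These $n$ maps are distinct, which gives the dichotomy ``$0$ or exactly $n$''. All of this is choice-free.

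For the compactness statement, assume every finite induced subgraph of $G$ maps to $C_n$. The key claim is that every closed path (a cycle in the undirected sense, possibly with repetitions) based at $v_0$ has algebraic length $\equiv 0 \pmod n$. Such a path uses only finitely many vertices, so it lies in a finite induced subgraph $F$. Take a homomorphism $h:F\to C_n$; applying the formula above along the closed path gives $h(v_0)=h(v_0)+\ell(p)$, hence $n\mid \ell(p)$. Consequently any two paths $p,q$ from $v_0$ to $w$ satisfy $\ell(p)\equiv\ell(q)\pmod n$, because $p$ followed by the reverse of $q$ is closed with algebraic length $\ell(p)-\ell(q)$.

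Now define $h(w)=\ell(p)\bmod n$ for any path $p$ from $v_0$ to $w$. Such a path exists by weak connectedness, and the value is independent of the choice by the previous step. Because $h$ is defined as a single value common to all paths, no choice of a particular path is needed, and the definition is legitimate in $\zf$. The only choice-like step is fixing one vertex $v_0$, which is a single existential instantiation (and $G$ is nonempty). To check that $h$ is a homomorphism, take an edge $(u,w)\in E$ and a path $p$ to $u$; extending $p$ by this forward edge gives a path to $w$ of algebraic length $\ell(p)+1$, so $h(w)=h(u)+1$. This proves $K_{C_n}$ for weakly connected instances.

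The main obstacle is small. It is the well-definedness of $h$, which we reduce to finite closed paths. The one point needing care is that a closed path must actually lie inside some finite substructure; this holds because it visits finitely many vertices and the induced substructure on them contains all of its edges. Since paths are finite sequences, the set of paths exists in $\zf$, and nothing further is required.
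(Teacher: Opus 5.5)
Your proof is correct and follows essentially the same route as the paper. Both arguments force the value of each vertex by the algebraic length of a path from a base vertex, get the count $n$ by shifting the value at that vertex, and locate any failure of well-definedness in two paths of different algebraic length mod $n$, which live in a finite subgraph. You simply phrase this contrapositively via closed paths and spell out more carefully why the definition needs no choice.
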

\begin{proof} Let $G=(V,E)$ be weakly connected and fix a $v\in V$. Suppose we want to build a homomorphism $f$ with $f(v)=i$ for some $i\in C_n$. For each $u\in G$, there is some path $p$ from $u$ to $v$ in $G$, so we must assign $u$ the value $i+j \mod n$ where $j$ is the algebraic length of $p$. Thus, there is at most $1$ homomorphism with $f(v)=i$. If this does define a homomorphism, then any choice of $f(v)$ will work giving us $n$ homomorphisms.

If this is not a homomorphism, such a failure will be witnessed by some subgraph with finite distance from $v$ (namely, two paths of different algebraic length modulo $n$), and therefore there is a finite subgraph of $G$ with no homomorphism to $C_n$.

\end{proof}

Before proceeding, we first record a technical lemma:
\begin{lemma}
    For every $n\geq 2$, the following are equivalent over \zf:
    \begin{enumerate}
        \item $K_{C_n}$
        \item For every graph $G=(V,E)$ of disjoint directed $n$-cycles, there is a homomorphism $h:G\to C_p$.
    \end{enumerate}
\end{lemma}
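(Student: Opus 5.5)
The direction (1)$\Rightarrow$(2) is immediate. A disjoint union $G$ of directed $n$-cycles is an instance of $C_n$. Every finite induced substructure of $G$ is a disjoint union of finitely many directed $n$-cycles and of proper induced subgraphs of such cycles, which are disjoint unions of directed paths. Each cycle maps to $C_n$ by the identity labelling, and each directed path maps to $C_n$ by reducing its length index modulo $n$. A finite disjoint union of such pieces therefore has a solution, with no choice needed since there are finitely many components. So $K_{C_n}$ yields a homomorphism $G\to C_n$. (I read the ``$C_p$'' in (2) as $C_n$.)

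For (2)$\Rightarrow$(1), I would fix an instance $\mathcal X$ of $C_n$ all of whose finite substructures are solvable, and decompose $\mathcal X$ into its weakly connected components. Each component $Y$ is definable from $\mathcal X$ without choice. Every finite substructure of $Y$ is a finite substructure of $\mathcal X$, so by Proposition \ref{lem: cyclic homs} the set $H_Y$ of homomorphisms $Y\to C_n$ has exactly $n$ elements. Moreover, $\mathbb Z/n\mathbb Z$ acts on $H_Y$ simply transitively by $f\mapsto f+k$.

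The plan is to build the graph $G$ with vertex set $\bigsqcup_Y H_Y=\{(Y,f): f\in H_Y\}$ and edges $(Y,f)\to(Y,f+1)$. For each $Y$ this gives a directed $n$-cycle, because the action is simply transitive of order $n$, and distinct components give disjoint cycles. By (2) there is a homomorphism $h:G\to C_n$.

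From $h$ I would choose, uniformly, one solution per component: for each $Y$ let $f_Y$ be the unique $f\in H_Y$ with $h(Y,f)=0$. Uniqueness holds because $h$ restricted to an $n$-cycle is a bijection onto $C_n$. Then $s=\bigcup_Y f_Y$ is a homomorphism $\mathcal X\to C_n$, since every edge of $\mathcal X$ lies inside a single component. This definition uses no choice.

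The only delicate point is the verification that $H_Y$ really forms a single $n$-cycle under $+1$, that is, that $f+k$ is again a homomorphism and that the orbit has size exactly $n$. This follows from translation invariance of $C_n$ and the rigidity statement of Proposition \ref{lem: cyclic homs}, and is otherwise routine.
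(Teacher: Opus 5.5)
Your proposal is correct and is essentially the paper's proof: the paper also builds the auxiliary graph whose vertices are the homomorphisms from components to $C_n$, with edges $f\to f+1$. It notes via Proposition~\ref{lem: cyclic homs} that this graph is a disjoint union of directed $n$-cycles, applies (2), and takes in each component the homomorphism labelled $0$. Your pairs $(Y,f)$, your explicit finite-solvability check for (1)$\Rightarrow$(2), and your reading of $C_p$ as $C_n$ only spell out details the paper leaves implicit.
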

\begin{proof}
Of course $(1)$ implies $(2)$. Suppose $(2)$ holds. Let $G$ be any graph where each component has a homomorphism to $C_n$. Define an auxiliary graph $G^*= (V^*, E^*)$ where
\[V^* :=\{h\in C_n^\s C: \s C \mbox{ is a component of }G, h\mbox{ is a homomorphism}\}\]
\begin{align*}E^*(f,g)\;:\lra\;& (\exists v)\;f(v)+1=g(v) \\\lra\;& \dom(f)=\dom(g)\mbox{ and }(\forall v\in \dom(f))\;f(v)+1=g(v).\end{align*} By the argument above, each component of $V^*$ is an $n$-cycle, so there is a homomorphism $H: G^*\rightarrow C_n$. Now we can define a homomorphism on $G$: for $v\in G$, set
\[f(v)=i \;\lra\; (\exists g\in V^*)\;g(v)=v\mbox{ and }H(g)=0.\]
\end{proof} 

We now proceed to the construction showing that $K_{C_p}$ and $K_{C_q}$ are pairwise independent over ZFA. We must specify a set of atoms and a group $G$ of permutations on this set, and we will then take $\mathcal{M}_p$ to be the symmetric model associated to $G$.

\begin{definition}
    Let $\mathcal{A}$ be a countable set of atoms, split $A$ into disjoint $p$-cycles, and define the group $G$ to be the group which rotates these cycles. More concretely, let the atoms be $A=\{a_{i,j}:i\in\mathbb{N},j\in \bb Z/p\bb Z\}$ with $G$ being the subgroup of the permutation group of $A$ so that $\sigma\in G$ if and only if for each $i\in\mathbb{N}$ there is some $n_i\in\bb Z/p\bb Z$ so that for all $j\in \bb Z/p\bb Z$ we have $\sigma(a_{i,j})=a_{i,j+n_i}$.
\end{definition}
A key point in our proof willbe that every element of $G$ has order $1$ or $p$.

\begin{theorem} \label{thm: antichain}
    For any primes $p,q$, the permutation model $\mathcal{M}_p$ of HS sets constructed from $G$ satisfies $\mathcal{M}_p\vDash K_{C_q}$ if and only if $p\neq q$.
\end{theorem}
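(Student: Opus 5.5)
We prove the two directions separately, using the technical lemma above to reduce $K_{C_q}$ to the statement that every graph of disjoint directed $q$-cycles admits a homomorphism to $C_q$.

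\emph{Failure of $K_{C_p}$ in $\mathcal{M}_p$.} Take the graph $\mathcal{G}$ with vertex set $A$ and edges $(a_{i,j},a_{i,j+1})$. It is a disjoint union of directed $p$-cycles, and every $\sigma\in G$ preserves it, so $\mathcal{G}\in\mathcal{M}_p$. Its finite subgraphs map to $C_p$ in $\mathcal{M}_p$: such a map is a finite set of pairs of atoms and integers, and is therefore supported by its finitely many atoms. Now suppose $h\in\mathcal{M}_p$ is a homomorphism $\mathcal{G}\to C_p$, with finite support $E$. Pick a cycle index $i$ such that no $a_{i,j}$ lies in $E$, and let $\sigma$ rotate cycle $i$ by $1$ and fix everything else. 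Then $\sigma\in\fix(E)$, so $\sigma(h)=h$. On the other hand, $\sigma(h)(a_{i,j+1})=h(a_{i,j})=h(a_{i,j+1})-1$, a contradiction. Hence no solution exists in $\mathcal{M}_p$.

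\emph{$K_{C_q}$ holds for $q\neq p$.} By the lemma it suffices to take a graph $\mathcal{H}\in\mathcal{M}_p$ of disjoint directed $q$-cycles and produce a homomorphism to $C_q$ in $\mathcal{M}_p$. Fix a finite support $E$ of $\mathcal{H}$, and let $H=\fix(E)$. The plan is as follows.
\begin{enumerate}
\item Every element of $H$ has order $1$ or $p$, and $H$ acts on each component of $\mathcal{H}$ setwise (via $\sigma\mapsto$ image cycle), permuting the components. Consider the stabilizer in $H$ of a component $C$. It acts on the $q$ vertices of $C$ by graph automorphisms, that is, by rotations of a $q$-cycle. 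The induced homomorphism into $\bb Z/q\bb Z$ has image of order dividing both $p$ (by element orders) and $q$. Since $p\neq q$, the image is trivial, so the stabilizer of $C$ fixes $C$ pointwise.
\item Choose one component from each $H$-orbit of components. This uses choice in the ground model, which is allowed since we are only building a set there. On each chosen component $C$, fix a homomorphism $h_C:C\to C_q$. Extend it along the orbit by $h(\sigma v)=h_C(v)$ for $\sigma\in H$. By step 1 this is well defined: if $\sigma v=\tau v'$ with $v,v'\in C$, then $\tau^{-1}\sigma$ maps $C$ to itself, hence fixes $C$ pointwise, so $v=v'$. Since each $\sigma$ is a graph isomorphism, $h$ is a homomorphism.
\item By construction $h$ is $H$-invariant, so $\fix(E)\subseteq\sym(h)$. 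Moreover $h$ is a set of pairs of hereditarily symmetric vertices and integers, so $h$ is hereditarily symmetric and lies in $\mathcal{M}_p$.
\end{enumerate}

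The main obstacle is step 1, and in particular making sure that no subtle issue arises from vertices which are not atoms but arbitrary HS sets. The argument only uses that $\sigma$ acts as a graph automorphism of $\mathcal{H}$ and that $\sigma^p=\id$. Every element of $G$ satisfies $\sigma^p=\id$ on atoms, hence on all sets by $\in$-induction. That is all we need.
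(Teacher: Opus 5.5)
Your proof is correct and follows the paper's argument. For $K_{C_p}$ you use the same atom-graph counterexample, defeated by rotating a single unsupported cycle. For $q\neq p$ you make the same reduction to disjoint $q$-cycles, and use the same observation that a group element of order dividing $p$ which stabilizes a $q$-cycle must fix it pointwise.

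The only difference is cosmetic. You pick orbit representatives of components under $\fix(E)$ and transport a coloring along each orbit. The paper instead colors the quotient graph $V/\sym(\mathcal{G})$ in the ground model and pulls that coloring back, which amounts to the same construction. Your version spells out the well-definedness and hereditary-symmetry checks more carefully than the paper does.
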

\begin{proof}
    First let us show $\mathcal{M}_p\nvDash K_{C_p}$. Consider the instance of $C_p$ whose underlying set is exactly the set $\mathcal{A}$ of atoms, and whose graph structure is given by how we defined the group $G$.

    More precisely, let $A_0,A_1,\dots$ be the pairwise disjoint sets of size $p$ from Definition 38 (so $\mathcal{A}=\bigcup_{i\geq0}A_i$). Each $A_i$ was given the structure of a $p$-cycle, so let this digraph be the instance of $C_p$.

    First note that this is an HS$_G$ instance of $C_p$: the underlying set is the set of atoms, which is stabilized by all permutations, and because the permutations only rotate the cycles $A_i$, the edge relation is preserved by any permutation as well.

    By Theorem \ref{tool4}, it remains to show there is no HS$_G$ solution. Suppose to the contrary that there is. Then, there is an HS homomorphism to $C_p$ of this digraph, i.e. a homomorphism which is fixed as soon as we fix a finite set of atoms. But this is impossible: given any finite set $E\subseteq\mathcal{A}$, $E$ contains vertices from only finitely many of the $A_i$, and therefore after fixing the vertices from those $A_i$, we can take some $A_N$ not among those $A_i$ and consider the element of $G$ which rotates just $A_N$. This permutation does not preserve the homomorphism.

    Now let us show when $q$ is a prime with $p\neq q$ that $\mathcal{M}_p\vDash K_{C_q}$. By the previous lemma, it suffices to show every HS instance of $C_q$ composed of disjoint $q$-cycles has a solution. Fix such a directed graph, $\s G$. Let $\mathcal{C}_1$ and $\mathcal{C}_2$ be two $q$-cycles of $\mathcal{G}$. First, note that if $x,y\in \mathcal{C}_1$ and $\pi\in G$ with $\pi(x)=y$ then $x=y$: since $\pi$ has order $1$ or $p$, $\pi^p=1\in G$, so $\pi^p(f_1)=f_1$. However, in this case it follows that $\pi\in \sym(C_1)$, and since $C_1$ is a q-cycle, $\pi^q(x)=x$ as well. Since $\gcd(p,q)=1$, we find $f_1=\pi(f_1)=g_1$.

    Similarly, if $x\in\mathcal{C}_1$ and $y,z\in\mathcal{C}_2$ and there are permutations $\pi,\sigma\in G$ with $\pi(x)=y$ and $\sigma(x)=z$, then the composition $\pi\sigma^{-1}$ sends $y,z$, so $f_2=g_2$.

    We deduce that for vertex $x$ of $\s G$, the orbit of $x$ (under $\sym(\s G)$) will meet each connected component of $\mathcal{G}$ in at most one vertex. Therefore the action by elements of $\sym(\s G)$ on $\mathcal{G}$ only permutes the cycles, i.e. connected components, of $\mathcal{G}$ (without rotations). So, if $\s G=(V,E)$ and we write $[v]$ for the orbit $\sym(\s G)\cdot v$, then $\s G'=(V/\sym(\s G), E')$ where $E'([u],[v])\lra E(u,v)$ is also a union of disjoint $q$-cycles. Let $f'$ be a ground model coloring of $\s G'$ and define $f:V\rightarrow C_q$ by $f(v)=f'([v])$. Then, $f$ is a homomorphism and invariant under $\sym(\s G)$. So, $f\in \text{HS}_G$.
\end{proof}

Even though the compactness principles for the directed cycles are incomparable in a strong sense, we can utilize the structure similar to taking their disjoint union (which we define in Section 1.1) to get a chain.\par
Following the order sketched in the introduction, we now prove that the construction of the infinite increasing chain of compactness principles works as expected:
\begin{theorem}\label{thm: finite choice}
    The following are equivalent over \zf:
    \begin{enumerate}
        \item $K_{\mathcal{D}_n}$
        \item $\ac(n)$
        \item $\bigwedge_{i=2}^nK_{C_i}$
    \end{enumerate}
\end{theorem}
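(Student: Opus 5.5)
We will prove the cycle $(1)\Rightarrow(3)\Rightarrow(2)\Rightarrow(1)$.

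\emph{$(1)\Rightarrow(3)$.} Fix $2\leq i\leq n$. An instance $\s G$ of $C_i$ becomes an instance of $\mathcal{D}_n$ by putting every vertex in $U_i$ and leaving the other unary predicates empty. Any homomorphism to $\mathcal{D}_n$ must then land in the $i$-cycle $\{c^i_j\}$, which is a copy of $C_i$. Finite solvability transfers in both directions, so $K_{\mathcal{D}_n}$ gives $K_{C_i}$. Equivalently, $C_i$ is pp-definable as the substructure $U_i$ of $\mathcal{D}_n$, and Theorem \ref{tool3} applies.

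\emph{$(3)\Rightarrow(2)$.} Let $\s F$ be a family of nonempty sets of size at most $n$; we may split it by cardinality $k\le n$. It is classical in $\zf$ (cf.\ \cite{jech2008axiom}) that $\ac(n)$ follows from choice for families of $k$-element sets, $k\le n$. We reduce further to $\ac(p)$ for primes $p\le n$ by the standard Mostowski-type argument: from a $k$-set one definably passes to sets of $d$-subsets, and if $\ac(p)$ holds for all primes $p \le k$ one can choose. We expect this number-theoretic bookkeeping to be the main obstacle and would follow Jech's treatment of $\ac(p)$ for $p\mid k$.

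To get $\ac(k)$ for sets of size $k$, we use $K_{C_k}$ as follows. For each $S\in\s F$ with $|S|=k$, form the graph whose vertices are the cyclic orderings of $S$ tagged with a starting point, i.e.\ bijections $\beta:\bb Z/k\bb Z\to S$, with edges $\beta\to\beta\circ(+1)$. This is a disjoint union of directed $k$-cycles, one per bijection class. Every finite part is $C_k$-colorable, so $K_{C_k}$ gives a homomorphism $h$. On each cycle, exactly one $\beta$ has $h(\beta)=0$; this picks a point $\beta(0)\in S$ for each cyclic ordering, i.e.\ a function from $(k-1)!$ cyclic orders into $S$. This does not yet give a single element. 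We would combine it with $\ac(p)$ for smaller primes (available by induction on $n$, since (3) is cumulative) applied to the preimage-multiplicity classes, which have sizes dividing $(k-1)!$ or are smaller than $k$, and iterate until a canonical element is obtained. The first choice to try is the element of $S$ chosen by the most cyclic orders, falling back to smaller-size choice when this is tied.

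\emph{$(2)\Rightarrow(1)$.} Let $\s X$ be a finitely solvable instance of $\mathcal{D}_n$. Each weakly connected component is finitely solvable and meets at most one $U_k$ nontrivially; if it met two, a finite witness would have no solution. So each component $\s C$ maps into a single cycle $C_k$, $k\le n$. By Proposition \ref{lem: cyclic homs}, which works uniformly in $\zf$ for connected instances, $\s C$ has exactly $k$ homomorphisms when it touches $U_k$. If it touches no $U_k$, it has at most $\sum_{k\le n}k$ homomorphisms to the cycles, and we restrict to those into the least admissible $k$, giving $k\le n$ of them. Applying $\ac(n)$ to the family of these solution sets selects one solution per component, and their union solves $\s X$. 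Thus $\ac(n)$ implies $K_{\mathcal{D}_n}$.
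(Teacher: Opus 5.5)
Your $(1)\Rightarrow(3)$ and $(2)\Rightarrow(1)$ are fine and essentially match the paper. The gap is in $(3)\Rightarrow(2)$, where the decisive step is left as ``iterate until a canonical element is obtained.'' Your graph of bijections $\beta\colon \mathbb{Z}/k\mathbb{Z}\to S$ does give, via $K_{C_k}$, a map $g_S$ from the $(k-1)!$ cyclic orderings of $S$ into $S$. (A side benefit is that the graphs for different $S$ are automatically disjoint.) However, nothing in your sketch excludes every $x\in S$ being chosen by the same number of orderings, and then the most-chosen set is all of $S$. For composite $k$ with $k\mid(k-1)!$ this cannot be ruled out, since $K_{C_k}$ may return any rotation on each cycle. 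For example, when $k=6$, each element could be hit by $20$ of the $120$ orderings. Falling back on the fibres of $g_S$ would require choice for sets of size up to $(k-1)!$, which your induction does not supply: it only gives $\ac(k-1)$.

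The missing observation for a prime $p$ is that $p\nmid(p-1)!$. Hence the counts $|g_S^{-1}(x)|$ cannot all be equal, so the set of most-chosen points is a nonempty proper subset of $S$. Then $\ac(p-1)$, which you have by induction from $\bigwedge_{i<p}K_{C_i}$, finishes the prime case.

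The composite case also has to be supplied. You call it the main obstacle and defer it to an unspecified Mostowski-type reduction; the description via $d$-subsets is not an argument. The reduction is true and short: for composite $k$, $\ac(k-1)$ alone gives choice for $k$-element sets.
\begin{itemize}
\item Choose $c(S\setminus\{x\})$ for each $x\in S$. This gives a fixed-point-free map $f_S\colon S\to S$.
\item If $f_S$ is not onto, or its cycles have unequal lengths, you get a definable nonempty proper subset of $S$ and can choose from it.
\item If $f_S$ has $k/d\ge 2$ cycles of length $d$, choose a cycle and then a point in it.
\item If $f_S$ is a single $k$-cycle, write $k=ab$ with $1<a,b<k$. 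Then $f_S^{a}$ splits $S$ into $a$ cycles of length $b$, and you choose as in the previous case.
\end{itemize}

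This functional-digraph idea is exactly the paper's route. On each $(n+1)$-set the paper builds $G_S$ from a choice function on its $n$-element subsets. It uses $\ac(n)$ when $G_S$ is disconnected, and otherwise applies $K_{C_i}$ to the connected $G_S$'s to cut $S$ into a nonempty proper piece.

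With these two repairs, your cyclic-orderings argument becomes a legitimate alternative for prime sizes. As written, though, the proposal does not establish $(3)\Rightarrow(2)$.
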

\begin{proof}
$(1)\Rightarrow(3):$ Each $C_i$ is a pp-definable substructure of $\s D_n$.  
$(2)\Rightarrow(1)$: For $\s X$ an instance of $\s D_n$, each component of $\s X$ has at most $n$ solutions whose images land in a cycle of minimal length. If each component has a solution, then we can pick a solution using $\ac(n)$. Otherwise, as in Lemma \ref{lem: cyclic homs}, there is a finite obstruction. 

$(3)\Rightarrow(2)$: We proceed by induction. For the base case, $n=2$, suppose $F$ is a family of sets size at most 2. We want to construct an instance of $C_2$ whose solutions corresponds to choice functions. Define $\mathcal{X}=(X, R^{\s X})$ where \[X=\{x:(\exists S\in F)\;|S|=2\wedge x\in S\}\]\[R^{\mathcal{X}}=\{(x,y)|x,y\in S\in F\wedge x\neq y\}\] Concretely, it assigns to each 2-element set a 2-cycle, with each element corresponding to a vertex. Any finite subinstance of $\mathcal{X}$ is solvable since it has no cycle of odd length, so $\mathcal{X}$ admits to a solution function $s$, which maps the elements to either $0$ or $1$. Then \[f(S)=x:\Leftrightarrow S=\{x\}\lor (x\in S\wedge s(x)=0)\] defines a choice function for $F$.

Now assume inductively that $\bigwedge_{i=2}^n C_i$ is equivalent to $\ac(n)$, and assume that $\bigwedge_{i=2}^{n+1} C_i$ holds. Then, $\ac(n)$ holds by induction. Given a family $F$ of sets size at most $n+1$, it suffices to consider the sets with exactly $n+1$ elements. Moreover, we have a choice function $c$ for the family
\[F'=\{S\setminus \{x\}: x\in S\in F\}.\] For each $S\in F$, consider the directed graph $G_S=(S,E_S)$, where $E_S=\{(a,b):a\in S\wedge b=c(S\setminus \{a\})\}$ (that is, each element points to the element given by $c$). Even though the connected components $G_S$ are not necessarily directed cycles, we do have the following:
\begin{lemma}
    All weakly connected directed graphs $G$ with $n\geq 2$ vertices so that each vertex points to exactly one other vertex, have a homomorphism to a directed cycle $C_m$ with $m\leq n$. Furthermore, $m=n$ if and only if $G$ is a cycle.
\end{lemma}
\begin{proof}
    This statement is straightforwardly true when there are 2 vertices, so assume inductively that it holds true for all graph with n vertices. For any such graph G with $n + 1$ vertices, if all vertices are pointed to then G forms an $n + 1$ cycle, else there is a vertex $v$ not pointed to by any vertex. Denote the vertex pointed by $v$ as $w$ and consider the subgraph of $G$ excluding $v$ and the edge $(v, w)$; by inductive hypothesis it admits a homomorphism $h$ to a directed cycle $C_m$ for $m\leq n<n+1$. Then the function \[
    H(x)=
    \begin{cases}
        c:\Leftrightarrow (c,h(w))\text{ is an edge in }C\text{ if }x=v\\
        h(x)\text{ else}
    \end{cases}\]
    is a homomorphism from $G$ to $C_m$.
\end{proof}
If for some $S$, $G_S$ breaks up into more than one connected component, then we can use $\ac(n)$ to choose a connected component, then to choose a vertex from that component.  So, we have a choice function for $F''=\{S\in F: G_S\mbox{ is not connected}\}.$ 

For $i\leq n$, we can define a choice function for \[F_i=\{S\in F: G_S\mbox{ is connected and has a homomorphism to }C_i\}\] as follows. By $\bigwedge_{i\leq n+1} K_{C_i}$, for each $i$, there is homomorphism from $f:\bigcup_{S\in C_i}G_S\rightarrow C_i$. Then, for each $S$, for some $j\in C_i$, $0<|f\inv[j]\cap S|<n+1$. Say $j_s$ is the least such $j$. So, using $\ac(n)$, we can choose an element from each $f\inv[j_s]$, where $j_s$ is as small as possible. Finally, we can patch the choice functions for $F''$ and each of the $F_i$'s together to get a choice function for $F$. 
\end{proof}

When combining this with the incomparability between $K_{C_p}$ and $K_{C_q}$, we derive that in ZFA, $K_{\mathcal{D}_m}$ is strictly weaker than $K_{\mathcal{D}_n}$, and therefore also $\ac([m])$ weaker than $\ac(n)$, if there is a prime $p$ in between $m$ and $n$ with $m<n$. Hence,
\begin{corollary}
    $\{K_{\mathcal{D}_p}:p\text{ prime}\}$ is a strictly increasing chain of compactness principles in the provability order.
\end{corollary}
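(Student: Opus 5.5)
The corollary should follow by combining Theorem~\ref{thm: finite choice} with the anti-chain models of Theorem~\ref{thm: antichain}. Two things are needed: the principles $K_{\s D_p}$ for $p$ prime are linearly ordered by implication, and every implication is strict.

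For the implications, take primes $p<p'$. By Theorem~\ref{thm: finite choice}, $K_{\s D_p}\equiv\ac(p)$ and $K_{\s D_{p'}}\equiv \ac(p')$ over \zf. Trivially $\ac(p')\vdash\ac(p)$, since a family of sets of size at most $p$ is also a family of sets of size at most $p'$. Alternatively, $\s D_p$ is a pp-definable substructure of $\s D_{p'}$: restrict to the union $U_2\cup\dots\cup U_p$, which is pp-definable as a union of unary predicates. Either way, $K_{\s D_{p'}}$ implies $K_{\s D_p}$, so the family is a chain.

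For strictness, I fix primes $p<p'$ and show $K_{\s D_p}\not\vdash K_{\s D_{p'}}$, working in \zfa\ first. I use the model $\s M_{p'}$ of Theorem~\ref{thm: antichain}, which satisfies $K_{C_q}$ for every prime $q\neq p'$ and fails $K_{C_{p'}}$. Since $K_{\s D_{p'}}$ implies $\bigwedge_{i\le p'}K_{C_i}$, and in particular $K_{C_{p'}}$, we get $\s M_{p'}\models\neg K_{\s D_{p'}}$.

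The main obstacle is the other half: showing $\s M_{p'}\models K_{\s D_p}$, equivalently $\bigwedge_{i=2}^p K_{C_i}$. This includes composite $i$, for which Theorem~\ref{thm: antichain} says nothing directly. I would handle all $i\le p$ at once by repeating its argument with $q$ replaced by $i$. The only arithmetic used there is $\gcd(p',q)=1$, which holds for every $2\le i\le p<p'$, since $p'$ is prime and larger than $i$. Concretely, by the technical lemma it suffices to color HS graphs $\s G$ made of disjoint directed $i$-cycles. Every $\pi\in G$ satisfies $\pi^{p'}=1$. If $\pi$ maps a vertex into its own cycle, then $\pi$ stabilizes that cycle and acts on it as a rotation, so $\pi^{i}$ also fixes the vertex; coprimality then forces $\pi$ to fix it. Consequently each $\sym(\s G)$-orbit meets each cycle at most once. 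The quotient $\s G/\sym(\s G)$ is then again a disjoint union of $i$-cycles, and a ground-model coloring of the quotient pulls back to a symmetric coloring of $\s G$.

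A cleaner route, if available, avoids $i\le p$ entirely: show directly that $\s M_{p'}\models\ac(p)$. Every element of $G$ has order dividing $p'$, so the orbits of $\sym(\cdot)$ acting on a set of size at most $p<p'$ are trivial. Hence every element of each such set is supported by the support of the set, and a choice function can be taken in the ground model using a well-order of orbit representatives. This is the standard Fraenkel-type argument. Finally, I transfer the \zfa\ independence to \zf\ by the forcing argument of Appendix~\ref{appendix: zf}. This gives $\zf+K_{\s D_p}\not\vdash K_{\s D_{p'}}$, so the chain is strictly increasing.
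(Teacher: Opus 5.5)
Your argument is correct and follows the paper's route: Theorem~\ref{thm: finite choice} plus the model $\mathcal{M}_{p'}$ of Theorem~\ref{thm: antichain}. You are in fact more careful than the paper. Its one-line derivation cites only the prime case of Theorem~\ref{thm: antichain} and leaves implicit that $\mathcal{M}_{p'}\models K_{C_i}$ for composite $i\le p$. Your rerun with $\gcd(i,p')=1$, or your direct verification of $\ac(p)$, supplies this.

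Two side remarks should be corrected, though neither carries the proof.

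First, $U_2\cup\dots\cup U_p$ is not pp-definable in $\mathcal{D}_{p'}$, because pp-formulas allow no disjunction. For instance, take $p=5$, $p'=7$. The binary map that sends the block $C_2\times C_3\cong C_6$ isomorphically onto the $6$-cycle of $\mathcal{D}_7$, and projects to the first coordinate on every other block, is a polymorphism that does not preserve the union. So keep only the $\ac(p')\vdash\ac(p)$ argument.

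Second, the passage to \zf\ is not an immediate citation of Appendix~\ref{appendix: zf}. That appendix explicitly warns that the $C_p$ results do not pass directly through the embedding theorem. For the chain you only need $\ac(p)\wedge\neg\ac(p')$.
\begin{itemize}
\item The negative half is witnessed by a bounded instance, so it transfers.
\item $\ac(p)$ transfers via clause (4) of Theorem~\ref{thm: embedding theorem}, since $G_{p'}$ has least supports. Inject $\bigcup F$ into $\mathcal{P}(\kappa)\times\tilde Y$, and take the least first coordinate occurring in each member of $F$. Then choose from the resulting finite subset of $\tilde Y$ using $\tilde c$, where $c\in HS^G$ is a choice function.
\end{itemize}
Alternatively, the Truss-criterion argument at the end of the appendix already gives $\zf+\ac(p)\not\vdash\ac(p')$.
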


\printbibliography

\newpage
\appendix
\section{Weak elimination of imaginaries} \label{appendix: WEI}

 Let $\s K$ be the class of finite structures $(\bb A, \leq)$ where
 \begin{enumerate}
     \item $\bb A$ is a Boolean algebra (in the signature $\sqcup, \sqcap, \cdot^c,0,1$)
     \item $\bb \leq$ is a linear order
     \item $0^\bb A<1^\bb A$ and $(\forall x\in \bb A)\; 0^\bb A\leq x\leq 1^\bb A.$
 \end{enumerate} Let $(\bb B,\leq)$ be the Fra\"iss\'e limit of $\s K$ (which was shown to exist in the text). Our goal is to show that $(\bb B,\leq)$ has weak elimination of imaginaries, i.e.~that for any finite subalgebras $\bb A, \bb C\subseteq \bb B$
\[\fix(\bb A\cap \bb C)=\ip{\fix(\bb A),\fix(\bb C)}.\] Throughout, we write $\fix(\bb A)$ for the pointwise stabilizer of $\bb A$ in $\aut(\bb B, \leq)$. For any sets ${X,Y\subseteq \aut(\bb B, \leq)}$ or ${X,Y\subseteq \bb B}$, we write $\ip{X,Y}$ for the group or subalgebra they generate.

First, let's recall some basic theory of Boolean algebras.
\begin{dfn}
    For $\bb A$ a Boolean algebra, let $\at(\bb A)$ be the set of order theoretic atoms of $\bb A$:
    \[a\in \at(\bb A)\;:\lra \;a\in \bb A, a\not=0,(\forall b\in \bb A)\;b\sqsubset a\rightarrow b=0.\]
    For $x,y\in \bb A$, say
    \[x\;\mathbf{cuts}\;y\;:\lra\; x\sqcap y \not=0\;\&\;x\sqcap y\not=y.\]
    \[x\perp y\;:\lra\;(x\mbox{ cuts }y \;\&\; y\mbox{ cuts }x) .\] We pronounce $x\perp y$ as $x$ and $y$ are \textbf{independent}.
    For $\bb C\subseteq \bb A$ a subalgebra, define
    \[x\perp_{\bb C}y\;:\lra \;(\forall c\in \at(\bb C))\, x,y\mbox{ cut }c\rightarrow  (x\sqcap c\perp y\sqcap c)\] and by convention declare that $x\perp_{\{0,1\}} y$ iff $x\perp y$.
    
    For subalgebras $\bb A,\bb C, \bb D\subseteq \bb B$, define
    \[\bb A\perp_{\bb D}\bb C\;:\lra\; \bb A\cap \bb C=\bb D\mbox{ and }(\forall a\in \bb A,c\in \bb C)\;a\perp_{\bb D} c.\]
\end{dfn}
Note that, $x\perp y$ if and only if $x\sqcap y,x\sqcap y^c, x^c\sqcap y$, and $x^c\sqcap y^c$ are all nonzero.

For $\bb A$ any finite Boolean algebra, it's well known that $\s P(\at(\bb A))\cong \bb A$ via the map $s\mapsto \bigsqcup s$. And, any map $f: \bb A\rightarrow \bb C$ factors through this isomorphism: for any $f\in \hom(\bb A, \bb C)$, there is function $F:\at(\bb C)\rightarrow \at(\bb A)$ so that
\[f(x)= \bigsqcup F\inv[\{a\in \at(\bb A): a\sqsubseteq x\}].\] So, we can amalgamate finite Boolean algebras by considering fibred products of finite sets.

\begin{dfn}\label{dfn:boolean algebra pushout}
    For set maps $f: A\rightarrow C$ and $g: B\rightarrow C$,
    \[A\otimes_C B:=\{(a,b)\in A\times B: f(a)=g(b)\}.\]

    For $\bb A$ and $\bb C$ any finite algebras with a common subalgebra $\bb D$, define $f: \at(\bb A)\rightarrow \at(\bb D)$ by
    \[f(a)=d\;:\lra a\sqsubseteq d.\] Define $g: \at(\bb B)\rightarrow \at(\bb D)$ similarly.
    
    We then have a Boolean algebra
    \[\bb A\oplus_\bb D \bb C= \s P\bigl(\at (\bb A)\otimes_{\at(\bb D)} \at(\bb C)\bigr)\]
    and an embedding $f_\bb A\in \hom(\bb A, \bb A\oplus_{\bb D}\bb C)$
    \[f_\bb A:\bb A\rightarrow \bb A\oplus_\bb D \bb C\]
    \[f_\bb A(x)=\{(a,c)\in \at(\bb A)\otimes_{\at(\bb D)}\at(\bb C): a\sqsubseteq x\}.\] (An embedding $f_\bb C\in\hom(\bb C, \bb A\oplus_{\bb D}\bb C)$ is defined mutatis mutandis.)
\end{dfn}

It is clear from construction that $f_\bb A$ and $f_\bb C$ are embeddings and that any pair of maps $g_{\bb A},g_{\bb C}$ that agree on $\bb D$ factor through the $\bb A\oplus_\bb D\bb C$. That is, we have defined a push-out for diagrams of Boolean algebras. See also \cite[Chapter 5]{DaveyPriestley}.

The following lemma tells us $\perp_\bb D$ is the free-ness relation associated this kind of amalgamation:

\begin{lem}\label{lem:useful lemma 1}
    For finite Boolean algebras $\bb A,\bb C\subseteq \bb B$, with $\bb A\cap\bb C=\bb D$, the following are equivalent:
    \begin{enumerate}
        \item $\ip{\bb A,\bb C}\cong \bb A\oplus_{\bb D} \bb C$ (via the map induced from $\id_\bb A$ and $\id_{\bb C}$)
        \item $\bb A\perp_{\bb D}\bb C$
        \item For any $a\in \at(\bb A),c\in \at(\bb C)$, $a\perp_{\bb D} c$.
    \end{enumerate}
\end{lem}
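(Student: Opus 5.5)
The plan is to pass through the atoms of $\ip{\bb A,\bb C}$. The key observation is that the subalgebra $\ip{\bb A,\bb C}\subseteq\bb B$ is finite. Its atoms are exactly the nonzero meets $a\sqcap c$ with $a\in\at(\bb A)$ and $c\in\at(\bb C)$: every element of $\ip{\bb A,\bb C}$ is a finite join of such meets, and distinct pairs give disjoint meets. Moreover, $a\sqcap c\not=0$ forces $a$ and $c$ to lie below the same atom of $\bb D$. Indeed, $a$ lies under a unique $d\in\at(\bb D)$ and $c$ under a unique $d'\in\at(\bb D)$, and if $d\not=d'$ then $a\sqcap c\sqsubseteq d\sqcap d'=0$.

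It follows that the canonical map $\bb A\oplus_\bb D\bb C\to\ip{\bb A,\bb C}$ induced by $\id_\bb A$ and $\id_\bb C$ corresponds, on atoms, to the partial map $(a,c)\mapsto a\sqcap c$ from $\at(\bb A)\otimes_{\at(\bb D)}\at(\bb C)$. This map is onto the atoms of $\ip{\bb A,\bb C}$, and it is injective on the pairs where it is nonzero. Hence the induced map is an isomorphism if and only if
\[(\ast)\qquad a\sqcap c\not=0\ \mbox{ for all }a\in\at(\bb A),\ c\in\at(\bb C)\mbox{ lying below a common }d\in\at(\bb D).\]
This gives $(1)\lra(\ast)$, where a zero pair is exactly what kills injectivity.

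For $(\ast)\lra(3)$, fix atoms $a,c$. If they lie below distinct atoms of $\bb D$, then no atom of $\bb D$ is cut by both, so $a\perp_\bb D c$ holds vacuously. If they lie below a common $d$ and both cut $d$, then $a\perp_\bb D c$ says exactly that $a\perp c$ inside $d$. This requires $a\sqcap c$, $a\sqcap(d\setminus c)$, $(d\setminus a)\sqcap c$, and $(d\setminus a)\sqcap(d\setminus c)$ all to be nonzero. The first of these is $(\ast)$ for the pair $a,c$. The middle two follow from $(\ast)$ applied to $a$ together with another $\bb C$-atom $c'\sqsubseteq d$, $c'\not=c$; such a $c'$ exists because $c$ cuts $d$. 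The last follows from $(\ast)$ applied to other atoms $a'\not=a$ and $c'\not=c$ below $d$. The degenerate cases, where $a=d$ or $c=d$, are immediate. Conversely, $(3)$ gives $a\sqcap c\not=0$ when both cut $d$, and when one of them equals $d$ the meet is simply the other atom.

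For $(2)\lra(3)$, one direction is trivial, since $(2)$ quantifies over all elements and in particular over atoms. For $(3)\Rightarrow(2)$, take arbitrary $x\in\bb A$, $y\in\bb C$, and an atom $d$ of $\bb D$ cut by both. Inside $d$, $x$ is a nonempty proper join of $\bb A$-atoms below $d$, and $y$ is a nonempty proper join of $\bb C$-atoms below $d$. Using $(\ast)$, which we have shown is equivalent to $(3)$, each of the four meets contains some nonzero $a\sqcap c$ and is therefore nonzero. The hypothesis $\bb A\cap\bb C=\bb D$ is built into the definition of $\perp_\bb D$ and is given.

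The main obstacle is the care needed over the convention that $x\perp_\bb D y$ only constrains the atoms of $\bb D$ cut by both $x$ and $y$, and over the degenerate cases where an atom of $\bb A$ or $\bb C$ coincides with an atom of $\bb D$. The reformulation via $(\ast)$ is chosen precisely to absorb these cases uniformly.
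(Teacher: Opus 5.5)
Your proposal is correct and follows essentially the paper's route: both arguments identify the atoms of $\bb A\oplus_{\bb D}\bb C$ with pairs $(a,c)$ lying over a common $\bb D$-atom, observe that the induced map sends $\{(a,c)\}$ to $a\sqcap c$, and reduce everything to these meets being nonzero. The paper's row/column cross-section picture in the grid corresponds to your choice of witness atoms $a'\neq a$, $c'\neq c$; your intermediate condition $(\ast)$ organizes this more cleanly and also fills in the $(2)\Leftrightarrow(3)$ step that the paper leaves as ``straightforward algebra.''
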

\begin{proof}
    The equivalence of $(2)$ and $(3)$ is straightforward algebra. To see that $(1)$ implies $(3)$, note that if $a\in \at(\bb A), c\in\at(\bb C)$ and $a,c$ both cut $d\in \at(\bb D)$, then $f_\bb A(a)$ is a horizontal cross section of $\{(x,y)\in \at(\bb A)\times \at(\bb C): x,y\sqsubseteq d\}$ and $f_\bb C(c)$ is a vertical cross-section. These cut each other.

    Finally, to see that $(2)$ implies $(1)$, we just need to check that the induced map $g: \bb A\oplus_\bb D\bb C\rightarrow \ip{\bb A, \bb C}$ is injective. It suffices to check this on atoms. Say we have $a,a'\in \at(\bb A), c,c'\in \at(\bb A)$ and $d,d'\in \at(D)$ with $a,c\sqsubset d$ and $a',c'\sqsubset d'$. Since $\bb A\perp_\bb D \bb C$, $a\sqcap c\not=0$ and $a'\sqcap c'\not=0$. Suppose  $g(\{(a,c)\})=g(\{(a',c')\}).$ Then, $a\sqcap c=a'\sqcap c'$. Since neither is $0$, we must have that $d=d'$. If one of $a,a',c,c'=d$, then clearly $a=a'$ and $c=c'$. If none of these atoms are in $\bb D$, then $a\perp c$ and $a'\perp c'$ so, either $a=a'$ and $c=c'$ or $a\sqcap c, a'\sqcap c'$ are disjoint nonzero element of $\ip{\bb A,\bb C}$. Thus, in either case $a=a'$ and $c=c'$, so $g$ is injective. 
\end{proof}

The next lemma is helpful for amalgamating orders. Its proof is an easy but tedious exercise, which we leave for the reader:

\begin{lem} \label{lem:useful lemma 2}
    If $X_1,..., X_n$ are sets with $X_i\cap X_j=Y$ for all distinct $i,j\in [n]$, $\leq_Y$ is a partial order on $Y$, and for each $i$ $\leq_i$ is a partial order on $X_i$ so that $\leq_i\res Y=\leq_Y$, then the transitive closure of $\bigcup_i \leq_i$ is a partial order on $\bigcup_{i=1}^n X_i$. In particular, there is a linear order on $\bigcup_i X_i$ extending each $\leq_i$.
\end{lem}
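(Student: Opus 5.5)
The plan is to first reduce to a statement about strict partial orders, and then apply the standard Szpilrajn argument; choice is not needed anywhere because all the orders are finite. I will work with the strict relations $<_i$ and $<_Y$. Let $\prec$ be the transitive closure of $\bigcup_i <_i$ on $X=\bigcup_i X_i$.

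\textbf{Step 1.} Transitivity of $\prec$ holds by construction, so the only thing to check is irreflexivity. This amounts to showing there is no cycle
\[z_0<_{i_0} z_1<_{i_1}\cdots<_{i_{k-1}} z_k=z_0.\]
I will take a cycle of minimal length and merge consecutive steps that use the same index, using transitivity of $<_i$. After this merging, consecutive indices are distinct, and $k\ge 2$ because $<_{i}$ is itself irreflexive.

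\textbf{Step 2.} At each junction we have $z_j\in X_{i_{j-1}}\cap X_{i_j}$. Since $i_{j-1}\neq i_j$, the hypothesis $X_{i_{j-1}}\cap X_{i_j}=Y$ gives $z_j\in Y$. Because $k\ge 2$, every vertex of the cycle is such a junction, so every $z_j$ lies in $Y$. Each step $z_j<_{i_j}z_{j+1}$ is then a relation between elements of $Y$. Since $\leq_{i_j}\res Y=\leq_Y$, this means $z_j<_Y z_{j+1}$. The whole cycle therefore lives in the partial order $\leq_Y$, which is impossible. Antisymmetry of the reflexive closure of $\prec$ follows from irreflexivity in the same way.

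This step is the main (mild) obstacle. The key point is that the pairwise intersections are all equal to the same set $Y$; that is exactly what forces every turning point of the cycle into $Y$. If the intersections merely contained $Y$, the argument would break.

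\textbf{Step 3.} For the ``in particular'' clause, I extend the partial order $\preceq$ to a linear order. In every application of this lemma in the paper, the sets $X_i$ are finite Boolean algebras, so $\bigcup_i X_i$ is finite. Finite Szpilrajn can be proved by induction in \zf: repeatedly pick a $\preceq$-minimal element, list it next, and remove it. The resulting linear order extends $\preceq$, and hence extends each $\leq_i$.
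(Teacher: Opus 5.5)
Your argument is correct. The paper gives no proof of this lemma (it is left as an exercise), and your minimal-cycle proof is the natural way to supply one. In a shortest cycle in $\bigcup_i <_i$, cyclically consecutive indices must differ. This includes the wrap-around pair $i_{k-1},i_0$, which you get by rotating a shortest cycle, and it is what puts $z_0$ into $Y$. So every vertex lies in some $X_i\cap X_j=Y$ with $i\neq j$, and the cycle becomes a cycle in $<_Y$. You should also say explicitly that the transitive closure of $\bigcup_i\leq_i$ equals $\prec$ together with the diagonal on $\bigcup_i X_i$, since equality steps can be deleted from any chain. Antisymmetry then follows directly from irreflexivity and transitivity of $\prec$.

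One caveat on scope: the lemma has no finiteness hypothesis. Steps 1 and 2 do not need one, because a failure of irreflexivity in a transitive closure is always witnessed by a finite cycle. Step 3, however, only proves the ``in particular'' clause when $\bigcup_i X_i$ is finite. For the statement as written, invoke Szpilrajn's extension theorem. That is legitimate here, because the permutation-model arguments are carried out over a ground model satisfying choice. Your choice-free finite induction is then the special case that covers every application in the paper.
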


We prove our main theorem in several steps. Want to write every $\rho\in \fix(\bb A\cap \bb C)$ as a product of automorphisms fixing either all of $\bb A$ or all of $\bb C$. First, we'll show that we can do this assuming that $\bb C$ and $\rho[\bb C]$ are independent of $\bb A$, independent of each other, and in a canonical order relative to $\bb A$.

\begin{dfn}
    For $\bb A,\bb C, \bb D\subseteq \bb B$, write 
    \[\bb A\leq_\bb D \bb C\;:\lra \;(\forall a\in \bb A, c\in \bb C)\;c<a \rightarrow (\exists d\in \bb D) \;c\leq d\leq a.\]

    For an order $\leq$ on a set $A$ and a bijection $f: A\rightarrow B$, define the \textbf{pushforward} $f_*(\leq)$ on $B$ by $x \;f_*(\leq)\; y:\lra f\inv(x)\leq f\inv(y)$
\end{dfn}
So, $\bb A\leq_\bb D \bb C$ if elements of $\bb A$ come before elements of $\bb C$ in gaps between elements of $\bb D$.

\begin{lem} \label{lem: normal form}
    Suppose $\bb A, \bb C_0, \bb C_1, \bb D\subseteq \bb B$ are finite ordered subalgebras so that 
    \begin{enumerate}
        \item $\bb A\perp_\bb D \bb C_0$, $\bb A\perp_\bb D\bb C_1$, and $\bb C_0\perp_\bb D \bb C_1$
        \item $\bb A\leq_\bb D \bb C_0,\bb C_1$
        \item $(\exists \rho\in \fix(\bb D))\;\rho[\bb C_0]=\bb C_1.$
    \end{enumerate} Then, there is $\sigma\in \ip{\fix(\bb A),\fix(\bb C_0)}$ so that $\sigma\res \bb C_0=\rho\res \bb C_0.$
\end{lem}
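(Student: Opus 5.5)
The plan is to find a single ordered subalgebra $\bb C_0\vee\bb C_1:=\ip{\bb C_0,\bb C_1}$ and realize the isomorphism $\rho\res\bb C_0:\bb C_0\to\bb C_1$ in two steps: first move $\bb C_0$ to an intermediate copy while fixing $\bb A$, then move it to $\bb C_1$ while fixing $\bb C_0$'s image appropriately. Concretely, I would aim to write $\sigma=\tau_2\tau_1$ (or a product of three factors) with each $\tau_i$ in $\fix(\bb A)$ or $\fix(\bb C_0)$. The natural first attempt is to take $\tau\in\fix(\bb A)$ with $\tau\res\bb C_0=\rho\res\bb C_0$ directly. This requires that the partial map $\id_{\bb A}\cup\rho\res\bb C_0$ extend to an isomorphism of ordered algebras $\ip{\bb A,\bb C_0}\to\ip{\bb A,\bb C_1}$, after which ultrahomogeneity of $(\bb B,\leq)$ (property (2) of the Fra\"iss\'e limit) gives $\tau$.

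For the Boolean algebra part, Lemma \ref{lem:useful lemma 1} with hypothesis (1) gives $\ip{\bb A,\bb C_i}\cong\bb A\oplus_{\bb D}\bb C_i$. Since $\rho$ fixes $\bb D$, $\rho\res\bb C_0$ commutes with the maps $\at(\bb C_i)\to\at(\bb D)$, so the pushouts are isomorphic via $\id_{\bb A}\oplus\rho$. The issue is the linear order. On $\bb A\cup\bb C_i$ the order is determined by hypothesis (2), because $\bb A\leq_{\bb D}\bb C_i$ pins down the relative position of $\bb A$- and $\bb C_i$-elements in each $\bb D$-gap, and $\rho$ preserves $\bb D$ and order. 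However, the order on the remaining elements of $\ip{\bb A,\bb C_0}$ (mixed joins) need not be transported correctly, so a direct $\tau$ may not exist. This is why I would use $\bb C_0\perp_{\bb D}\bb C_1$ to build an intermediate copy.

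The key step is to construct, via amalgamation in $\s K$ (pushouts plus Lemma \ref{lem:useful lemma 2} to glue orders) and universality of the limit, a third algebra $\bb C_2\subseteq\bb B$ with these properties:
\begin{itemize}
\item $\bb C_2$ is free over $\bb D$ from $\ip{\bb A,\bb C_0,\bb C_1}$ and still satisfies $\bb A\leq_{\bb D}\bb C_2$;
\item the order on $\ip{\bb A,\bb C_2}$ is chosen to be the transport of the order on $\ip{\bb A,\bb C_0}$ under $\id\oplus\rho'$, where $\rho'$ is the copy isomorphism $\bb C_0\to\bb C_2$;
\item the order on $\ip{\bb C_1,\bb C_2}$ is chosen to be the transport of the order on $\ip{\bb C_1,\bb C_0}$, and similarly as needed.
\end{itemize}
Lemma \ref{lem:useful lemma 2} is what allows these independently prescribed orders on the pieces $\ip{\bb A,\bb C_2}$, $\ip{\bb C_0,\bb C_2}$, $\ip{\bb C_1,\bb C_2}$, which pairwise meet in the already-ordered old part, to be extended to a single linear order on the pushout. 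Ultrahomogeneity then yields $\tau_1\in\fix(\bb A)$ with $\tau_1\res\bb C_0=\rho'$, and an automorphism fixing $\bb C_0$, or after conjugation by $\tau_1$ fixing $\bb C_2$, carrying the remaining copy to $\bb C_1$. The composite of these lies in $\ip{\fix(\bb A),\fix(\bb C_0)}$, with conjugates handled by rewriting $\tau\fix(\bb C_2)\tau\inv=\fix(\tau\bb C_2)$.

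The main obstacle is the bookkeeping of orders. One must check that the intersections of the pieces being glued are exactly the common subalgebras, which is the hypothesis of Lemma \ref{lem:useful lemma 2} and follows from the freeness in (1). One must also verify that the transported orders agree on those intersections, where hypothesis (2) and $\rho\in\fix(\bb D)$ are used. I would first try to settle this with just one intermediate copy $\bb C_2$ and a three-factor product $\sigma=\tau_3\tau_2\tau_1$, adding a further copy if the order constraints conflict.
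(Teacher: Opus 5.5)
There is a gap, and it sits at the step that carries the lemma's content. Your diagnosis is correct: a single element of $\fix(\bb A)$ agreeing with $\rho$ on $\bb C_0$ need not exist. Take $\bb D=\{0,1\}$ and let $\bb A,\bb C_0,\bb C_1$ be the four-element algebras of pairwise independent $a,c,\rho(c)$. Hypotheses (1)--(3) constrain only elements of $\bb A\cup\bb C_0\cup\bb C_1$, so one may have $a\sqcap c<a<a\sqcap\rho(c)$.

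However, you never fix which orders the intermediate copy carries, and you never check that they are jointly consistent; that check is where hypothesis (2) must be used. The one concrete extra prescription you list is also wrong. Ordering $\ip{\bb C_1,\bb C_2}$ as the transport of $\ip{\bb C_1,\bb C_0}$ only produces an automorphism in $\fix(\bb C_1)$, which is not known to lie in $\ip{\fix(\bb A),\fix(\bb C_0)}$. Worse, it clashes with what your factor ``fixing $\bb C_0$ and carrying $\bb C_2$ to $\bb C_1$'' needs, namely $\ip{\bb C_0,\bb C_2}$ ordered as the $(\id_{\bb C_0}\oplus\rho'\rho\inv)$-transport of $\ip{\bb C_0,\bb C_1}$. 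Together these force $\rho(y)<\rho'(x)<c<\rho(y)$ whenever $x,y,c\in\bb C_0$ satisfy $\rho(x)<c<\rho(y)<x$ in $\bb B$, and (1)--(3) allow this. Relatedly, your new pieces meet pairwise in the new algebra $\bb C_2$, not in the old part. So Lemma \ref{lem:useful lemma 2} does not by itself glue them onto the existing order; you must check that the relations they induce among old elements already hold in $\bb B$.

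To repair it, prescribe only two pieces: $\ip{\bb A,\bb C_2}$ as the $(\id_{\bb A}\oplus\rho')$-transport of $\ip{\bb A,\bb C_0}$, and $\ip{\bb C_0,\bb C_2}$ as above. They agree on $\bb C_2$ because $\rho$ preserves order. The relations they induce between $a\in\bb A$ and $c\in\bb C_0$ are true in $\bb B$:
\begin{itemize}
\item If $a\le x$ and $\rho(x)\le c$ with $x\in\bb C_0$, then $a\le\rho(x)$. Otherwise (2) for $\bb C_1$ gives $d\in\bb D$ with $\rho(x)\le d\le a$; applying $\rho\inv$ gives $x\le d\le a\le x$, so $\rho(x)=d=a$, a contradiction.
\item If $c\le\rho(x)$ and $x\le a$, then $\rho(x)\le a$, by (2) for $\bb C_0$ and $\rho\in\fix(\bb D)$ (using $\bb A\cap\bb C_0=\bb D$ when $x=a$).
\end{itemize}
So the glued order is acyclic. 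Embedding over the old part and using ultrahomogeneity gives $\tau\in\fix(\bb A)$ with $\tau\res\bb C_0=\rho'$ and $\mu\in\fix(\bb C_0)$ with $\mu\res\bb C_2=\rho(\rho')\inv$. Then $\sigma=\mu\tau$ works.

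Your conjugation variant can also be made to work. Keep the $\ip{\bb A,\bb C_2}$ prescription, order $\ip{\bb C_0,\bb C_2}$ with $\bb C_2\setminus\bb D$ last in each $\bb D$-gap, and make $\ip{\bb C_1,\bb C_2}$ its $(\id_{\bb C_2}\oplus\rho)$-transport. The automorphism in $\fix(\bb C_2)=\tau\fix(\bb C_0)\tau\inv$ extending $\rho\res\bb C_0$ is then the desired $\sigma$.

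The two-piece configuration is exactly what the paper's $\s W=\s X\oplus_{\bb A}\s Y$, with orders $\le$, $g_*(\le)$, $\le$ on $\s X,\s Y,\s Z$, encodes. There $f_{\s Y}[\bb C_1]$ plays the role of your $\bb C_2$, and the paper's agreement check is the computation above. The paper's last step, however, asserts a single $\sigma\in\fix(\bb A)$, which your observation shows cannot hold in general. The embedding $\psi$ extends only $\id_{\s X}$, so $\psi\circ f_{\s Y}$ need not fix $\bb C_1$. The argument should end with $\mu\inv\sigma'$, where $\sigma'\in\fix(\bb A)$ extends $\psi\circ f_{\s Y}\circ g$ and $\mu\in\fix(\bb C_0)$ extends $\psi\circ f_{\s Z}$.
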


\begin{proof}
    First, set $\s X= \ip{\bb C_0, \bb A}$ and $\s Y=\ip{\bb C_1, \bb A}, \s Z=\ip{\bb C_0, \bb C_1}$ as a algebras, and \[\s W=\s X\oplus_\bb A \s Y.\] Since $\s Z\cong \bb C_0\oplus_\bb D \bb C_1$, we have algebra embeddings $f_\s X, f_\s Y, f_\s Z$ from $\s X ,\s Y$, and $\s Z$ respectively to $\s W$ so that \[\im f_\s X\cap \im f_\s Y=f_\s X[\bb A]=f_\s Y[\bb A],\] and $f_\s Y\res\bb A= f_\s X\res \bb A.$ Similarly for the other pairs of maps: $f_\s Z,f_\s X$ agree at $\bb C_0$ and $f_\s Y, f_\s Z$ agree at $\bb C_1.$

    Now, since $\bb C_0, \bb C_1\perp_\bb D \bb A$ and $\bb C_0\cong \bb C_1$, we have a Boolean algebra isomorphism $g: \s X\rightarrow \s Y$ which is the identity on $\bb A$ and $\rho$ on $\bb C_0$. We can order $\s Y$ to make this an ordered algebra isomorphism. Set $\preceq = g_*(\leq),$ and define
    \[\bb X=(\s X, \leq), \quad \bb Y =(\s Y, \preceq),\quad  \bb Z=(\s Z, \leq).\] We can find an order on $\s W$ which agrees with (the pushforward of the) orders from $\bb X$, $\bb Y$, and $\bb Z$. More formally, let $\leq_0=(f_{\s X})_*(\leq),$ $\leq_1=(f_{\s Y})_*(\preceq)$, and $\leq_2=(f_{\s Z})_*(\leq)$. We want to apply Lemma \ref{lem:useful lemma 2} first to $\im(f_{\s X})$ and $\im(f_{\s Z}),$ then to $\im(f_{\s Y})$ and $\im(f_{\s X})\cup\im(f_{\s X})$. Note that $\im(f_{\s X})\cap \im(f_{\s Z})=f_{\s X}(\bb C_0)$, and $\leq_0, \leq_2$ clearly agree here since they are pushforwards of the same order along identical maps, so we have a partial order $\leq'$ on $\im(f_{\s X})\cup \im(f_{\s Z})$ which agree with $\leq_0$ and $\leq_2$. We can compute $\bigl(\im(f_{\s X})\cup \im(f_{\s Z})\bigr)\cap \im(f_{\s Y})=f_{\s Y}[\bb C_1\cup \bb A].$ If $a,b\in \bb A$, $a\preceq b$ iff $a\leq b$, so $\leq'$ and $\leq_1$ agree in this case. If $a,b\in \bb C_1$, then $a\preceq b$ iff $\rho\inv(a)\leq \rho\inv(b)$ iff $a\leq b$. So, again $\leq'$ and $\leq_1$ agree. Finally, if $b\in \bb C_1$ and $a\in \bb A$, then, since $\bb A\leq_\bb D \bb C_1$, $b\leq a$ iff there is $d\in \bb D$ with $b\leq d\leq a$. Since $\rho\in \fix(\bb D)$, this is equivalent to $g\inv(b)=\rho\inv(b)\leq d\leq a$. So, again $\leq'$ agrees with $\leq_1$. Thus, we can find an order $\leq$ on $\s W$ which agrees with $\leq_0, \leq_1$ and $\leq_2$ as desired.

    We now have two ways to embed $\bb X$ into $\bb B$. The identity map is an embedding. By the genericity of Fra\"iss\'e limits, the identity map extends to an embedding $\psi: \bb W\rightarrow \bb B$, and then we have an embedding $\psi\circ g$. Observe that \[\psi\circ g\res \bb A=\id\mbox{ and }\psi\circ g\res \bb C_0=\rho.\] Again by genericity, there is an automorphism $\sigma\in\aut(\bb B)$ that connects these two embeddings: \[(\psi\circ g)\res \bb X=\sigma\res \bb X.\] In particular, $\sigma\in \fix(\bb A)$ and $\sigma\res \bb C_0=\rho\res \bb C_0$ as desired.

\end{proof}

Now, we need to maneuver $\bb C$ and $\rho(\bb C)$ to be free of $\bb A$ over $\bb D$. The lemma below lets us take one step in the right direction:

\begin{lem}[Step lemma] \label{lem: step lemma}
    For finite subalgebras $\bb A,\bb C_0,\bb C_1\subseteq \bb B$, there is $\sigma\in \fix(\bb A)$ so that
    \begin{enumerate}
        \item $\sigma[\bb  C_1]\cap \bb C_0\subseteq \bb A\cap \bb C_1$
        \item $\sigma[\ip{\bb C_1\cup \bb A}]\leq_\bb A \ip{C_0\cup \bb A}$
        \item $\sigma [\ip{\bb C_1\cup\bb A}]\perp_\bb A \ip{ \bb C_0\cup \bb A}$
    \end{enumerate}
\end{lem}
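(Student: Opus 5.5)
We prove the Step lemma with a single amalgamation, and the extension property of the Fra\"iss\'e limit supplies the automorphism. Put $\s X=\ip{\bb C_0\cup\bb A}$ and $\s Y=\ip{\bb C_1\cup\bb A}$. Both are finite ordered subalgebras of $\bb B$ and both contain $\bb A$. Form the Boolean pushout $\s W=\s X\oplus_{\bb A}\s Y$ of Definition \ref{dfn:boolean algebra pushout}, with the canonical embeddings $f_{\s X}$ and $f_{\s Y}$. These agree on $\bb A$, and $\im f_{\s X}\cap\im f_{\s Y}=f_{\s X}[\bb A]$. By Lemma \ref{lem:useful lemma 1}, direction (1)$\Rightarrow$(2), the image subalgebras satisfy $f_{\s Y}[\s Y]\perp_{f[\bb A]} f_{\s X}[\s X]$. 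This freeness is exactly what condition (3) asks for.

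Next we order $\s W$. Push the order of $\s X$ forward along $f_{\s X}$ and the order of $\s Y$ along $f_{\s Y}$. These pushforwards agree on $f[\bb A]$, since both restrict to the order of $\bb A$ there. We want a common extension in which every element of $f_{\s Y}[\s Y]\setminus f[\bb A]$ comes \emph{before} every element of $f_{\s X}[\s X]\setminus f[\bb A]$ lying in the same gap between consecutive elements of $\bb A$. To get it, take the union of the two pushforward orders and add the pairs $y<x$ for such $y,x$ in a common $\bb A$-gap. Then check that the transitive closure is still antisymmetric. This is essentially the computation behind Lemma \ref{lem:useful lemma 2}. The gaps are determined by $\bb A$, on which the two orders agree. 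So any chain of relations leaving a gap must pass through an element of $\bb A$, and the added pairs cannot create a cycle. Extend the result to a linear order on $\s W$; $0$ and $1$ remain the extremes. This gives condition (2) in the form $\sigma[\s Y]\leq_{\bb A}\s X$, reading the definition of $\leq_{\bb D}$ in the orientation the statement intends.

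With $\bb W=(\s W,\leq)\in\s K$ in hand, use universality and genericity to extend the inclusion $\s X\hookrightarrow\bb B$, identified with $f_{\s X}$, to an embedding $\psi:\bb W\to\bb B$ with $\psi\circ f_{\s X}=\id_{\s X}$. Now $\psi\circ f_{\s Y}:\s Y\to\bb B$ and the inclusion $\s Y\hookrightarrow\bb B$ are isomorphic ordered finite substructures, and both are the identity on $\bb A$. By ultrahomogeneity there is $\sigma\in\aut(\bb B,\leq)$ extending $\psi\circ f_{\s Y}\circ \id^{-1}$. This $\sigma$ fixes $\bb A$ pointwise, so $\sigma\in\fix(\bb A)$, and $\sigma[\s Y]=\psi f_{\s Y}[\s Y]$.

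Conditions (2) and (3) then transfer from $\s W$ along the order-embedding $\psi$. Independence transfers because $\psi$ is an injective Boolean embedding, so it preserves nonzero meets. The gap-ordering transfers because the atoms and the order of $\bb A$ are unchanged. For condition (1), $\sigma[\bb C_1]\cap\bb C_0\subseteq\psi[\im f_{\s Y}\cap\im f_{\s X}]=\bb A$. Moreover, elements of $\sigma[\bb C_1]\cap\bb A$ are fixed by $\sigma$, so they are $\sigma$ of themselves and already lie in $\bb C_1$, which gives containment in $\bb A\cap\bb C_1$. The main obstacle is the order amalgamation in the second paragraph: we must check carefully that the forced gap-ordering is compatible with the two pushforward orders. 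This includes elements of $\s X$ or $\s Y$ comparable with elements of $\bb A$ near gap endpoints, and the transitivity of the combined relation.
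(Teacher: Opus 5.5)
Your proposal is correct and follows essentially the same route as the paper. You form the pushout $\mathcal{X}\oplus_{\mathbb{A}}\mathcal{Y}$ and order it by amalgamating the pushforward orders with the $\mathcal{Y}$-elements placed before the $\mathcal{X}$-elements in each $\mathbb{A}$-gap, embed it into $\mathbb{B}$ via $\psi$ with $\psi\circ f_{\mathcal{X}}=\mathrm{id}$, take $\sigma\in\operatorname{fix}(\mathbb{A})$ extending $\psi\circ f_{\mathcal{Y}}$ by ultrahomogeneity, and get (1) from $\operatorname{im} f_{\mathcal{X}}\cap\operatorname{im} f_{\mathcal{Y}}=f[\mathbb{A}]$ plus the fixed-point argument — all as in the paper, with your appeal to Lemma~\ref{lem:useful lemma 1} for independence and your orientation of $\leq_{\mathbb{A}}$ (which matches the literal definition) being, if anything, slightly more explicit than the paper's sketch.
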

\begin{proof}
    Let $\s X= \ip{\bb A\cup \bb C_0}$ and $\s Y=\ip{\bb A\cup \bb C_1}$. Set $\s Z=\s X\oplus_\bb A\s Y$, and let $f_\s X: \s X\rightarrow \s Z$, $f_\s Y: \s Y\rightarrow \s Z$ be the associated algebra embeddings.
    
    So far $\s Z$ is just an algebra, but we can equip it with an ordering. Let \[\leq_0 := (f_\s X)_*(\leq)\quad\quad \leq_1 := (f_\s Y)_*(\leq)\] be the pushforwards of the inherited orders onto $\im (f_\s X)$ and $\im f_\s (Y)$ respectively. Since $\im (f_\s) X\cap (\im f_\s Y)=f_\s X[\bb A]$ and $f_\s X\res \bb A=f _\s Y\res \bb A$, Lemma \ref{lem:useful lemma 2} says that there is a partial order $\preccurlyeq$ on $\s Z$ which restricts to $\leq_0$ and $\leq_1$ on their domains.

    Importantly, $\preccurlyeq$ also satisfies that if $a,a'\in \bb A$ are so that there is no $a''\in \bb A$ with $a\leq a''\leq a'$, then the $\preccurlyeq$-interval $(f_\s X(a),f_\s X(a'))$ is a union of two chains--one from $\bb C_0$ and one from $\bb C_1$. We can extend  $\leq_1$ to a linear order on $\s Z$ by declaring that $f_{\s Y}[\bb C_1]$ comes before $f_\s X[\bb C_1]$ in each $\bb A$-gap, i.e.:
    \[x\leq y \;:\lra \; x\preccurlyeq y\mbox{ or }(x\in \im f_\s Y \mbox{ and }x\not\preccurlyeq y).\] In particular, for $f_\s Y[\bb C_1]\leq_{f_\s Y[\bb A]} f_\s X[\bb C_0].$

    Now, by genericity of $\bb B$, there is an ordered algebra embedding $\psi: (\s Z,\leq)\rightarrow \bb B$ so that $\psi\circ f_\s X=\id_\s X.$ And, $\psi\circ f_\s Y \res\bb A=\psi\circ f_\s X\res \bb A=\id_\bb A$. By the definition of Fra\"iss\'e limit, $\psi\circ f_\s Y$ extends to an automorphism $\sigma\in \fix(\bb A)$. And, since $\psi$ is an embedding, $\sigma[\s Y]=(\psi\circ f_\s Y)[\s Y]$ and $\s X=(\psi\circ f_{\s X})[\s X]$ stand in the same relation. Namely, $(2)$ and $(3)$ of the lemma are satisfied. Lastly, \[\sigma[\bb C_1]\cap \bb C_0\subseteq \psi[\im f_\s Y\cap \im f_\s X]=\psi\circ f_{\s Y}[ \bb A]=\bb A,\] and since $\sigma\in \fix(\bb A)$, this means that if $x\in \bb A\cap \sigma(\bb C_1),$ $x=\sigma\inv(x)\in \bb C_1$. So, the intersection is as claimed.
\end{proof}

It is routine to check that if $\bb C_1, \bb C_0$, $\bb A$, and $\sigma$  are as in the step lemma, then $\sigma$ doesn't undo any progress. That is, if $x\in \bb C_1$ and $y\in \bb C_0\cup \bb A$ satisfy
\[x\leq y\mbox{ or }(\exists d\in \bb C_0\cap \bb A)\;x\geq d>y\] then $\sigma(x)\leq y$ or $(\exists d\in \bb C_0\cap \bb A)\; \sigma(x)\geq d>y$. Similarly, if $x,y$ satisfy $x\perp_\bb D y$, then $\sigma(x)\perp_{\bb D} y$.

Iterating the above lemma over and over, first fixing $\bb A$ then $\bb C$ then $\bb A$ and so on, we can get $\bb C$ into the desired position:

\begin{lem}[Bubble sort lemma] \label{lem: bubblesort}
Fix any finite subalgebras $\bb A, \bb C, \bb X\subseteq \bb B$, and set $\bb D=\bb A\cap \bb C$. There is a $\sigma\in \ip{\fix(\bb A),\fix(\bb C)}$ so that 
\[\sigma[\bb X]\perp_{\bb D} \bb C\quad\mbox{ and }\quad \sigma[\bb X] \leq_\bb D \bb C. \]

And, the same relations hold between $\sigma(\bb X)$ and $\bb A$.
\end{lem}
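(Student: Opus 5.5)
The plan is to apply the Step lemma (Lemma \ref{lem: step lemma}) alternately, first with $\bb A$ as the fixed algebra and then with $\bb C$, composing the automorphisms it produces. Each one lies in $\fix(\bb A)$ or in $\fix(\bb C)$, so the composite lies in $\ip{\fix(\bb A),\fix(\bb C)}$.

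\emph{Step 1.} Apply the Step lemma with fixed algebra $\bb C$, with $\bb C_0:=\ip{\bb A\cup\bb C}$ and $\bb C_1:=\bb X$. This gives $\sigma_1\in\fix(\bb C)$ such that:
\begin{itemize}
\item $\sigma_1[\bb X]\cap\ip{\bb A\cup\bb C}\subseteq\bb C$;
\item $\sigma_1[\ip{\bb X\cup\bb C}]\perp_{\bb C}\ip{\bb A\cup\bb C}$;
\item $\sigma_1[\ip{\bb X\cup\bb C}]\leq_{\bb C}\ip{\bb A\cup\bb C}$.
\end{itemize}
So $\bb X_1:=\sigma_1[\bb X]$ is free from $\bb A$ over $\bb C$ and sits before it in every $\bb C$-gap.

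\emph{Step 2.} Apply the Step lemma with fixed algebra $\bb A$, with $\bb C_0:=\ip{\bb A\cup\bb C}$ and $\bb C_1:=\bb X_1$. This gives $\sigma_2\in\fix(\bb A)$ with the symmetric properties relative to $\bb A$: $\sigma_2[\bb X_1]$ meets $\ip{\bb A\cup\bb C}$ only inside $\bb A$, is free from $\ip{\bb A\cup \bb C}$ over $\bb A$, and is ordered before it in $\bb A$-gaps.

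\emph{Step 3.} Put $\sigma:=\sigma_2\sigma_1$. Its image $\sigma[\bb X]$ meets $\bb C$ only in $\bb A\cap\bb C=\bb D$. The first thing to check is the independence $\sigma[\bb X]\perp_{\bb D}\bb C$. For an atom $x$ of $\sigma[\bb X]$ and an atom $c$ of $\bb C$ that both cut an atom $d$ of $\bb D$, I want to show that $x\sqcap d$ and $c\sqcap d$ are independent. Freeness over $\bb A$ (Lemma \ref{lem:useful lemma 1}) makes the atoms of $\ip{\sigma[\bb X]\cup\bb A\cup\bb C}$ below an atom $a$ of $\bb A$ into a product of the corresponding $\sigma[\bb X]$-fibre and $\ip{\bb A\cup\bb C}$-fibre. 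Summing these over the atoms $a\sqsubseteq d$ then exhibits the four required pieces $x\sqcap c$, $x\sqcap c^{c}$, and so on below $d$ as nonzero. Two subtleties need handling here: that $c$ genuinely cuts inside some $\bb A$-atom, and the degenerate case where $c\sqcap d$ is a union of $\bb A$-atoms. Step 1 is exactly what takes care of the degenerate case: it has already made $\bb X_1$ free from $\bb A$ over $\bb C$, and $\sigma_2$ preserves freeness with respect to $\bb D\subseteq\bb A$ by the ``no progress undone'' remark after the Step lemma.

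\emph{Step 4.} Next I check the order condition $\sigma[\bb X]\leq_{\bb D}\bb C$. Suppose $c<x$ with $c\in\bb C$ and $x\in\sigma[\bb X]$. Step 2 supplies $a\in\bb A$ with $c\leq a\leq x$. Then the order placement from Step 1, which is preserved by $\sigma_2$ by the same remark, lets me replace $a$ by some $d\in\bb D$. The same two steps, with the roles of $\bb A$ and $\bb C$ swapped (or one more application of the Step lemma), give the analogous relations between $\sigma[\bb X]$ and $\bb A$.

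The main obstacle is the invariance remark after the Step lemma, namely that a later step does not destroy freeness or ordering already achieved relative to the smaller algebra $\bb D$. If the two-step argument does not close, I will iterate in bubble-sort fashion. I will apply the Step lemma alternately over $\bb A$ and over $\bb C$, and track the finite quantity
\[
\bigl|\at(\sigma[\bb X]\cap\ip{\bb A\cup\bb C})\bigr|
\]
together with the number of pairs violating $\leq_{\bb D}$. Each step should strictly decrease one of these, so the iteration terminates at an automorphism with the required properties.
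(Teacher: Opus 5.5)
\textbf{The two-step argument has a genuine gap.} No product $\sigma=\sigma_2\sigma_1$ with $\sigma_1\in\fix(\bb C)$ and $\sigma_2\in\fix(\bb A)$, in either order, gives the conclusion in general. The problem is not how you invoke the Step lemma.

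\emph{Independence fails.} Split $1=r_1\sqcup r_2\sqcup r_3\sqcup r_4$. Let $\bb A$ have atoms $r_1\sqcup r_2,\ r_3\sqcup r_4$ and let $\bb C$ have atoms $r_1,\ r_2\sqcup r_3,\ r_4$, so $\bb D=\{0,1\}$. Take $\bb X=\ip{x_0}$ with $0\neq x_0\sqsubset r_1$. Then $\sigma_1(x_0)\sqsubseteq r_1\sqsubseteq r_1\sqcup r_2$, so $\sigma(x_0)\sqsubseteq r_1\sqcup r_2$ and $\sigma(x_0)\sqcap r_4=0$, even though both cut $1$.

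\emph{The order condition fails.} Take $\bb A=\{0,a,a^c,1\}$ and $\bb C=\{0,c,c^c,1\}$ with $0<a<c<a^c<c^c<x_0<1$. Then $\sigma_1(x_0)>c^c>a^c$, so $\sigma(x_0)>a^c>c$ with nothing of $\bb D$ in between.

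The reason is that a factor from $\fix(\bb A)$ only moves $x$ inside the $\bb A$-atoms it meets and inside its $\bb A$-gap, and likewise for $\bb C$. So each application of the Step lemma advances $x$ by one edge in the intersection graph of $\bb A$- and $\bb C$-atoms below a $\bb D$-atom, or past one block of the $\bb A$/$\bb C$ interleaving inside a $\bb D$-gap. Both can be arbitrarily long, so no bounded number of alternations suffices.

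Concretely, Steps 3 and 4 misread what Step 1 provides:
\begin{itemize}
\item Since $\sigma_1\in\fix(\bb C)$, Step 1 leaves the relation between $\bb X$ and $\bb C$ exactly as it was; it only frees $\bb X_1$ from $\bb A$ over $\bb C$. So there is no independence from $\bb C$ over $\bb D$ for $\sigma_2$ to preserve.
\item The remark after the Step lemma only protects order relations witnessed by elements of the algebra being fixed. So $\sigma_2$ can move $x$ into a different $\bb C$-gap and destroy the placement from Step 1.
\item The ``degenerate case'' you single out, $c\sqcap d$ a join of $\bb A$-atoms, cannot occur: then $c\sqcap d\in\bb D$ would lie strictly below the $\bb D$-atom $d$.
\end{itemize}

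\textbf{The fallback has the right shape but the wrong potential.} Iterating is essentially what the paper does. However, $|\at(\sigma[\bb X]\cap\ip{\bb A\cup\bb C})|$ does not detect failures of $\perp_{\bb D}$ at all; in the first example it is already minimal. You also give no mechanism by which a step makes progress on independence.

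The missing idea is connectivity. For each $\bb D$-atom $d$, consider the graph whose vertices are the atoms of $\bb A$ and of $\bb C$ below $d$, with an edge when two of them meet nontrivially. This graph is connected, because the join of any component lies in $\bb A\cap\bb C=\bb D$ and below $d$.

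Iterating the Step lemma along paths of this graph, alternately fixing $\bb A$ and $\bb C$, makes $\sigma(x)$ cut every vertex. That alone gives $\sigma(x)\perp_{\bb D} y$ for every $y\in\bb A\cup\bb C$ cutting $d$, since $y\sqcap d$ and $d\sqcap y^c$ are nonempty joins of such atoms.

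The paper combines this with the order part in a minimality argument on $|S_\star(\sigma)|+|S_\dagger(\sigma)|$. Here $S_\dagger$ counts non-independent pairs, and $S_\star$ counts order violations, which are removed by pushing $x$ below the largest violating element of $\bb A\cup\bb C$.
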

\begin{proof}
    By the step lemma, we can assume that $\bb X \cap \bb C=\bb X\cap \bb A=\bb D$. For $\sigma\in \aut(\bb B, \leq)$, define $S_\star(\sigma)$ to be the set of witnesses to the failure $\sigma(\bb X)\leq_\bb D \bb C,\bb A$:
    \[S_\star(\sigma)=\{ (x,y)\in \bb X\times(\bb A\cup \bb C): \sigma(x)>y \;\&\; \neg(\exists d\in \bb D)\; \sigma(x)\geq d>y\}.\]
    Similarly, $S_\dagger(\sigma)$ is the set of witnesses to $\sigma(\bb X)\not\perp_\bb D \bb A, \bb C$
    \[S_\dagger=\{(x,a)\in \bb X\times(\at(\bb A)\cup \at(\bb C)): \neg(x\perp_{\bb D} a)\}.\]
    
    Pick some $\sigma\in \fix(\bb A, \bb C)$ so that $|S_\star(\sigma)|+|S_\dagger(\sigma)|$ is minimal. Suppose towards contradiction that one of $S_\star(\sigma)$ or $S_\dagger(\sigma)$ is nonempty. We'll use the step lemma to show that we can find $\tau\in \ip{\fix(\bb A),\fix(\bb B)}$ reducing $|S_\dagger(\sigma)|+|S_\star(\sigma)|$.

    Say that $S_\star(\sigma)$ is nonempty. We have some $x\in \sigma[\bb X]$ and $a\in \bb A\cup \bb C$ with no $d\in \bb D$ so that $x\geq d>c$. We can assume that $c$ is the largest such element of $\bb A\cup \bb C$ and, without loss of generality, $c\in \bb C$. Then, there is no $a\in \bb A$ with $x \geq a>c$. The step lemma (along with the comments following it) applied to $\bb C_0=\bb C$ and $\bb C_1=\sigma[\bb X]$ says there is $\tau\in \fix(\bb A)$ so that $S_\star(\tau\circ \sigma)\subseteq S_\star(\sigma)$, $S_\dagger(\tau\circ \sigma)\subseteq S_\dagger(\sigma),$, and $(\tau\circ \sigma)(\bb X)\leq_\bb A \bb C$. In particular, $(\tau\circ \sigma)(x)\leq c$. But this contradicts the minimality of $\sigma$.

    Now say that $S_\dagger(\sigma)$ is nonempty. For each atom $d\in \at(\bb D)$ which is not an atom of $\bb A$ or $\bb C$, define a graph $G=(V,E)$ with
    \[V=\{a\in \at(\bb A)\cup \at(\bb C): a\sqsubseteq d\}\]
    \[E(a,a')\;:\lra \;a\not=a'\mbox{ and } a\sqcap a'\not=0.\] Then, $G$ is connected since if $C$ is a component of $G$ $\bigsqcup C$ is in both $\bb A$ and $\bb C$. And, if $E(a,c)$, either $a\sqcap c\not=a$ or $a\sqcap c\not=c$ since otherwise $a=c\in \bb D$. For any $x\in \bb B$ with $x\sqcap d\not=0,d$, the we can apply the step lemma twice to find $\rho\in \ip{\fix(\bb A),\fix(\bb C)}$ so that if $x$ cuts $a\in V$ then $\rho(x)\perp a$ and $\rho(x)\perp a'$ for every neighbor $a'$ of $a$. By connectivity, we can iterate the step lemma to find $\rho$ so that $\rho(x)$ cuts every element of $V$. Fix some $(x,a)\in S_\dagger(\sigma)$. Then $a\not\in \bb D$ (otherwise $x\perp_\bb D a$ automatically). From the argument just outlined applied to the $\bb D$-atom containing $a$, we can iterate the step lemma to find $\tau\in \ip{\fix(\bb A),\fix(\bb C)}$ so that $S_\star(\tau\circ \sigma)\subseteq S_\star(\sigma)$, $S_\dagger(\tau\circ\sigma)\subseteq S_\dagger(\sigma)$, and $(x,a)\not\in S_\dagger(\tau\circ \sigma)$. Again, this contradicts minimality.

    Thus both sets are empty and $\sigma$ is as desired.
\end{proof}

Finally, we can prove weak elimination of imaginaries for $\bb B$.

\begin{thm}\label{thm: wei}
    For any finite subalgebras $\bb A,\bb C\subseteq \bb B$, 
    \[\fix(\bb A\cap \bb C)=\ip{\fix(\bb A),\fix(\bb C)}.\]
\end{thm}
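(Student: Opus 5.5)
The inclusion $\ip{\fix(\bb A),\fix(\bb C)}\subseteq\fix(\bb A\cap\bb C)$ is immediate: set $\bb D=\bb A\cap\bb C$; both $\fix(\bb A)$ and $\fix(\bb C)$ fix $\bb D$ pointwise, and $\fix(\bb D)$ is a group. So the work is the reverse inclusion. Fix $\rho\in\fix(\bb D)$. The plan is to factor $\rho$ as a product of elements of $\fix(\bb A)$ and $\fix(\bb C)$. We first move $\bb C$ and $\rho[\bb C]$ into the normal position required by Lemma~\ref{lem: normal form}, then correct $\rho$ on $\bb C$, and finally absorb what is left into $\fix(\bb C)$.

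\textbf{Step 1 (normalize $\bb C$).} Apply the Bubble sort lemma (Lemma~\ref{lem: bubblesort}) with $\bb X=\bb C$. This gives $\sigma_0\in\ip{\fix(\bb A),\fix(\bb C)}$ with $\sigma_0[\bb C]\perp_{\bb D}\bb A,\bb C$ and $\bb A\leq_\bb D\sigma_0[\bb C]$ (reading the order relation in the orientation Lemma~\ref{lem: normal form} needs). Since $\sigma_0$ fixes $\bb D$, we also have $\sigma_0[\bb C]\cap\bb A=\bb D$. Next, apply the Bubble sort lemma again, now with $\bb X=\rho[\bb C]$ and with the pair $(\bb A,\sigma_0[\bb C])$ in place of $(\bb A,\bb C)$. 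This yields $\sigma_1\in\ip{\fix(\bb A),\fix(\sigma_0[\bb C])}$ such that $\bb C_1:=\sigma_1\rho[\bb C]$ is free over $\bb D$ from both $\bb A$ and $\bb C_0:=\sigma_0[\bb C]$, and lies in canonical order relative to $\bb A$. We have $\fix(\sigma_0[\bb C])=\sigma_0\fix(\bb C)\sigma_0^{-1}$, and $\sigma_0$ lies in $H:=\ip{\fix(\bb A),\fix(\bb C)}$, so $\sigma_1\in H$. By construction $\sigma_1\rho\sigma_0^{-1}\in\fix(\bb D)$ maps $\bb C_0$ onto $\bb C_1$. One point needs checking here: the remark after the step lemma that later steps do not undo earlier progress. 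That is what keeps $\bb C_0$ and $\bb A$ in their normalized relative position while $\rho[\bb C]$ is being moved.

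\textbf{Step 2 (apply the normal form).} All hypotheses of Lemma~\ref{lem: normal form} now hold for $\bb A,\bb C_0,\bb C_1,\bb D$ with $\rho'=\sigma_1\rho\sigma_0^{-1}$. The lemma gives $\tau\in\ip{\fix(\bb A),\fix(\bb C_0)}\subseteq H$ with $\tau\res\bb C_0=\rho'\res\bb C_0$. Then $\tau^{-1}\rho'$ fixes $\bb C_0$ pointwise, so $\tau^{-1}\rho'\in\fix(\bb C_0)=\sigma_0\fix(\bb C)\sigma_0^{-1}\subseteq H$. Therefore $\rho'\in H$, and hence $\rho=\sigma_1^{-1}\rho'\sigma_0\in H$.

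\textbf{Main obstacle.} The delicate step is Step 1. The difficulty is the interaction of three algebras $\bb A$, $\bb C_0$, $\bb C_1$: we must arrange that $\bb C_0\perp_\bb D\bb C_1$, and also that both $\bb C_i$ satisfy the ordering condition $\leq_\bb D$ relative to $\bb A$, all simultaneously, using only moves inside $H$. I would first try to handle this by running the bubble sort argument with $\bb X=\rho[\bb C]$ against the enlarged algebra $\ip{\bb A,\bb C_0}$, so that a single application secures every required relation at once. After that, I would check that the intersection $\bb C_0\cap\bb C_1$ equals exactly $\bb D$, using condition (1) of the step lemma.
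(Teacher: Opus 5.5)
Your argument is the paper's proof. Your $\sigma_0$ and $\sigma_1$ play the roles of the paper's $\tau$ and $\sigma$, and the finish via Lemma~\ref{lem: normal form} together with $\fix(\sigma_0[\bb C])=\sigma_0\fix(\bb C)\sigma_0\inv\subseteq\ip{\fix(\bb A),\fix(\bb C)}$ is the same. The only difference is that you spell out the two Bubble sort applications (first to $\bb C$ against $(\bb A,\bb C)$, then to $\rho[\bb C]$ against $(\bb A,\sigma_0[\bb C])$), which the paper leaves implicit. Your worry about later steps undoing progress does not arise: the second application moves only $\rho[\bb C]$ and leaves $\bb A$ and $\bb C_0$ in place.
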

\begin{proof}
    Fix $\rho\in \fix(\bb A\cap \bb C)$ and set $\bb D=\bb A\cap \bb C$. By the bubble sort lemma, there are some $\sigma,\tau\in \ip{\fix(\bb A),\fix(\bb C)}$ so that 
    \begin{enumerate}
        \item $(\sigma\circ\rho)[\bb C]\cap \bb A=\tau[\bb C]\cap \bb A=(\sigma\circ \rho)[\bb C]\cap \tau[\bb C]=\bb D$
        \item $(\sigma\circ \rho)[\bb C]\perp_\bb D \tau[\bb C], (\sigma\circ\rho)[\bb C]\perp_\bb D \bb A, \tau[\bb C]\perp_\bb D \bb A$
        \item $(\sigma\circ \rho)[\bb C]\leq_\bb D \bb A, \tau[\bb C]$ and $\tau[\bb C]\leq_\bb D\bb A$.
    \end{enumerate}
    By Lemma \ref{lem: normal form}, there is some $\nu\in \ip{\fix(\bb A),\fix(\bb C)}$ so that $\nu\res \bb \tau[\bb C]=\sigma\circ\rho\circ\tau\inv\res\tau [\bb C].$ Thus, 
    \[\nu\inv\circ \sigma\circ\rho\circ\tau\inv\in \fix(\tau[\bb C])=\tau \fix(\bb C)\tau\inv\subseteq\ip{\fix(\bb A),\fix(\bb C)} \] Since $\nu,\sigma, $ and $\tau$ are all in $\ip{\fix(\bb A),\fix(\bb C)},$ so it $\rho$. The other containment is obvious.
\end{proof}

\section{ZF results} \label{appendix: zf}

Two of the main results of this paper--Theorem \ref{thm: boolean structures} and Theorem \ref{thm: antichain}--are stated as theorems about $\zfa$. We will explain how to draw analogous conclusions in $\zf$.

First, we will briefly explain how to transfer our results about Boolean structures in $\zfa$ to results about $\zf$ using standard techniques. Permutation models admit embeddings into submodels of generic extensions, and our results pass through the basic embedding theorems. We will state these embedding theorems below and sketch how they apply in our situation.

Throughout, we will assume that $M$ is a transitive model of $\zfa+\ac$ with set of atoms $A$. As usual, this assumption can be avoided by reflection arguments.

\begin{dfn}
    Say that $\s I\subseteq [A]^{<\infty}$ is a generating family for $G$ if:
    \begin{enumerate}
        \item For any $E\in [A]^{<\infty}$ there is an $F\in \s I$ with $E\subseteq F$
        \item $\s I$ is $G$-invariant, i.e.~ if $E\in \s I$ and $\pi\in G$, then $\pi(E)\in \s I$.
    \end{enumerate}
    Note that $\s I$ is in $HS^G$ under these conditions. 

    Say that $G$ has \textbf{least supports} if there is a generating family $\s I$ for $G$ so that, for any $E,F\in \s I$
    \[\fix(E\cap F)=\ip{\fix(E), \fix(F)}.\]
\end{dfn}

For instance, the groups $\aut(\bb B, \leq)$ as in Theorem \ref{thm: boolean structures} has least supports with the family of finite subalgebras of $\bb B$ as a generating family
\[\s I=\{\ip{E}: E\in [B]^{<\infty}\},\] and the groups $G_p$ as in Theorem \ref{thm: antichain} have least supports where $\s I$ is the set of unions of cycles.

\begin{thm}[{\cite[Theorem 6.1, 6.7]{jech2008axiom}}] \label{thm: embedding theorem}
    Fix any ordinal $\alpha\in \s M$ and any group $G\subseteq \aut(A)$. There is a generic extension $M[\s G]$ and a submodel $M^G[\s G]\vDash \zf$, and a map
    $\tilde \cdot: HS^G\rightarrow \s M^G[\s G]$ so that
    \begin{enumerate}
        \item $\tilde \cdot$ is an embedding, i.e.~it is injective and $\tilde x\in \tilde y$ if and only if $x\in y$
        \item If $x\in \tilde y$ for some $y$, then $x=\tilde z$ for some $z$.
        \item $\tilde \cdot$ is an isomorphism between $(\s P^\alpha(A))^{HS^G}$ and $(\s P^\alpha(\tilde A))^{\s M^{G}[\s G]}$ (in particular, it is onto)
        \item if $G$ admits least supports, then, there is an ordinal $\kappa\in \s M^G[\s G]$ so that for any $X\in \s M^{G}[\s G]$ there is some $Y\in HS^G$ and a one-to-one map $f: X\rightarrow \s P(\kappa)\times \tilde Y$ with $f\in M^{G}[\s G]$.  
    \end{enumerate}
\end{thm}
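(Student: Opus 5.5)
This is Jech's transfer theorem (Jech--Sochor embedding together with the least-supports refinement), so the plan is to follow the standard symmetric-extension construction.

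\textbf{Construction of the extension.} Working in $M\vDash\zfa+\ac$ with atoms $A$, I force with Cohen conditions adding $|A|\times\kappa$ many generic subsets of a sufficiently large regular $\kappa$ (chosen above $\alpha$-many iterated power sets of $A$ as computed in $M$). The conditions are finite partial functions $p: A\times\kappa\times\omega\rightarrow 2$. For each $a\in A$ let $\tilde a$ be the set of generic reals indexed by $a$, and set $\tilde A=\{\tilde a: a\in A\}$. The group $G$ acts on conditions by permuting the first coordinate, and I extend this action to a group $\s G'$ that also includes finitary permutations of the $\kappa\times\omega$ coordinates. I use the normal filter generated by the $\fix(E)$, $E$ finite, and take $M^G[\s G]$ to be the class of interpretations of hereditarily symmetric names. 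The symmetry lemma then gives $M^G[\s G]\vDash\zf$.

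\textbf{The embedding $\tilde\cdot$ and items (1)--(2).} I define $\tilde x$ by $\in$-recursion: atoms go to $\tilde a$, and $\tilde x=\{\tilde y: y\in x\}$. Since pure sets are fixed, $\tilde x$ has a symmetric name whenever $x\in HS^G$. Injectivity uses the fact that the $\tilde a$ are distinct and are not in the range of the pure-set part; these facts follow from genericity. Item (2) is immediate from the recursive definition.

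\textbf{Item (3), surjectivity onto $\s P^\alpha(\tilde A)$.} This is the real content of Jech's Theorem 6.1. Given a symmetric name $\dot X$ for a subset of $\tilde A$ with support $E$, I want to show that $\{a: \tilde a\in X\}$ is decided in the ground model uniformly along $\fix(E)$-orbits, and therefore lies in $HS^G$. The argument is homogeneity: any condition forcing $\tilde a\in\dot X$ can be moved by an automorphism fixing $E$ and the condition's support to a condition compatible with one forcing $\tilde b\notin\dot X$. The first step is to show this for subsets of $\tilde A$, i.e.\ that $\{a: \tilde a\in X\}\in M$ and is symmetric. The general case of $\s P^\beta$ then follows by induction on $\beta\le\alpha$, and the choice of large $\kappa$ is what makes each new rank behave.

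\textbf{Item (4), least supports.} Here I follow Theorem 6.7. Given $X\in M^G[\s G]$ with name $\dot X$, I use least supports to assign to each element $x\in X$ a canonical least support $E_x\in\s I$. This support is definable in $M^G[\s G]$ because the family $\s I$ is in $HS^G$. I then code $x$ by the pair consisting of a subset of $\kappa$ (the name's ``shape'' relative to $E_x$, coded as an ordinal-indexed set of conditions) and the tuple $\tilde E_x$, which lies in $\tilde Y$ for a suitable $Y\in HS^G$ (finite sequences from $A$).

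\textbf{Expected obstacle.} The main obstacle is showing that this coding is injective and lies in $M^G[\s G]$. Least supports are needed exactly here: without them, $x$ has no canonical support, and the map could only be defined up to a choice.
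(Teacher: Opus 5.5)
Your outline matches the paper's sketch (canonical names for elements of $HS^G$ inside a symmetric submodel of a generic extension). However, the forcing you actually specify makes item (3) false. Finite partial functions $A\times\kappa\times\omega\to 2$ form a ccc Cohen forcing that adds reals, and with finite conditions the size of $\kappa$ buys nothing.

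Here is a concrete failure. Write $N=M^G[\mathcal G]$ and let $x=x_{a_0,0}\in\tilde a_0$ be one of your generic reals. Since $\tilde a_0\in N$ and $N$ is transitive, $x\in N$. The sequence $s=\langle [A]^n:n<\omega\rangle$ is invariant under every permutation of $A$. So $s\in HS^G$ for any $G$, and $\tilde s=\langle \widetilde{[A]^n}:n<\omega\rangle\in N$. Hence $X=\bigcup_{n\in x}\widetilde{[A]^n}$ belongs to $(\mathcal P^2(\tilde A))^N$. If $X=\tilde Y$, injectivity gives $Y=\bigcup_{n\in x}[A]^n$, so $x=\{n:[A]^n\subseteq Y\}\in M$, contradicting genericity. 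Thus (3) already fails at $\alpha=2$ for every $G$, so your construction cannot supply the case $\alpha=\omega$ that the paper uses. For $\alpha\ge\omega+1$ it fails even more directly: $x_{a,\xi}\in\mathcal P^{\omega+1}(\tilde A)$ and $x_{a,\xi}\in\tilde a$, while $a$ has no elements.

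The missing ingredient is closure. Homogeneity can only show that the trace $\{y:\tilde y\in X\}$ is invariant under a support group. It cannot put the trace in $M$: when there are infinitely many orbits (on $[A]^{<\omega}$ above, or on $A$ itself when $G$ is trivial), which orbits are included can depend on the generic at the support coordinates.

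Jech's construction, which the paper's sketch follows ($\tilde a\subseteq 2^\kappa$, an action of $G\ltimes S_\kappa$), uses conditions of size ${<}\kappa$ on $A\times\kappa\times\kappa$, with $\kappa$ regular and $\kappa>|\mathcal P^\alpha(A)|^M$. This forcing is ${<}\kappa$-closed. So a single condition $p$ in the generic decides ``the canonical name of $\tilde y$ is in $\dot X$'' for all $y\in(\mathcal P^\beta(A))^{HS^G}$ at once, which places the trace in $M$. Only after that does your homogeneity step show the trace is in $HS^G$: lift $\bar\pi\in\fix(E)$ to an element of $\sym(\dot X)$ that uses permutations of the $\kappa$-coordinates to make $\pi p$ compatible with $p$. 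Closure also makes the generics unbounded subsets of $\kappa$, of rank $\kappa>\alpha$, so they stay out of $\mathcal P^\alpha(\tilde A)$.

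Finally, item (2) is not immediate for $y=a\in A$. The elements of $\tilde a$ are generic sets, not of the form $\tilde z$, so (2) can only hold for $y\notin A$. The statement as printed has the same slip.
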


This theorem says that every permutation model of $\zfa$ embeds into a model of \zf, and the embedding can cover any desired segment of the universe. Of course the embedding cannot hit the entire universe, but if $G$ admits least supports, then the ``gap" can be filled in by a set of ordinals.

Roughly, one constructs $\s M[\s G]$ by adding, for each $a\in A$, a generic set of subsets of $\kappa$, $\tilde a\subseteq 2^\kappa$ for some large enough $\kappa$. Then, $G\ltimes S_\kappa$ acts naturally on the forcing poset $\bb P$ and so on the class of all $\bb P$-names. The submodel $\s M^{G}[\s G]\subseteq \s M[\s G]$ is the set of interpretations of hereditarily symmetric names, and every hereditarily symmetric element of $M$ has a hereditarily symmetric canonical name which gives the embedding. The reverse embedding using supports is a technical modification of the argument that the model in Theorem \ref{thm: boolean structures} can linearly order every set. The details are all contained in \cite{jech2008axiom}.

\begin{prop}[Con(\zf)]
    Analogous to Theorem \ref{thm: boolean structures}, 
    \[\zf+K_{K_2}\not\vdash K_{2\sat}, K_{\bb F_2}.\]
\end{prop}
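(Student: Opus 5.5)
We transfer the permutation model $\s M=HS^G$ of Theorem \ref{thm: boolean structures} to a model of $\zf$ using Theorem \ref{thm: embedding theorem}. Here $G=\aut(\bb B,\leq)$ and the atoms are the underlying set of the Fra\"iss\'e limit $(\bb B,\leq)$. By Theorem \ref{thm: wei}, $G$ has least supports, with generating family the finite subalgebras of $\bb B$. So all four clauses of Theorem \ref{thm: embedding theorem} are available. Fix a sufficiently large ordinal $\alpha$ (say $\alpha=\omega+5$) and let $N=\s M^G[\s G]\vDash\zf$, with the embedding $x\mapsto\tilde x$. We must show $N\vDash K_{K_2}$, $N\vDash\neg K_{2\sat}$ and $N\vDash\neg K_{\bb F_2}$.

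\textbf{Failures of $K_{2\sat}$ and $K_{\bb F_2}$.} The instances $\s X$ (for $2\sat$) and $\s Y$ (for $\bb F_2$) built in the proof of Theorem \ref{thm: boolean structures} are subsets of low rank over the atoms: the domain is $A$, and the relations are subsets of $A^k$ (or of finite sequences from $A$). Hence they lie in $\s P^\alpha(A)^{HS^G}$, and their images $\tilde{\s X},\tilde{\s Y}$ are instances of the same libraries in $N$. Every finite substructure of $\tilde{\s X}$ is the image of a finite substructure of $\s X$, and $\tilde\cdot$ preserves finiteness and the relations. So by clause (3), each finite subinstance of $\tilde{\s X}$ has a solution in $N$; the solution is the image of one in $HS^G$. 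Suppose $N$ had a solution $s:\tilde A\to 2$. Then $s$ is a subset of $\tilde A\times 2$, which lies in $\s P^\alpha(\tilde A)^N$, so by clause (3) $s=\tilde f$ for some $f\in HS^G$. Since $\tilde\cdot$ is an $\in$-isomorphism on this segment, $f$ is a solution of $\s X$ in $HS^G$, contradicting Theorem \ref{thm: boolean structures}. The same argument handles $\tilde{\s Y}$.

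\textbf{$K_{K_2}$ holds in $N$.} Since $K_{K_2}\equiv\ac(2)$, it suffices to show that every set in $N$ can be linearly ordered. Let $X\in N$. Clause (4) gives an injection $f:X\to\s P(\kappa)\times\tilde Y$ with $Y\in HS^G$ and $f\in N$. In $HS^G$, $Y$ carries a linear order $\leq_Y$; this was shown in the proof of Theorem \ref{thm: boolean structures} via $S\mapsto(\supp(S),\sym(Y)\cdot S)$. Then $\widetilde{\leq_Y}$ is a linear order on $\tilde Y$ in $N$, because $\tilde\cdot$ is an $\in$-embedding and ordered pairs are absolute. Also, $\s P(\kappa)$ is linearly ordered in $\zf$ by comparing characteristic functions lexicographically at the least point of difference. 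The lexicographic product order on $\s P(\kappa)\times\tilde Y$ pulled back along $f$ linearly orders $X$. Therefore $N\vDash\ac(2)$, i.e.\ $N\vDash K_{K_2}$.

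\textbf{Main obstacle.} The delicate point is the bookkeeping in clause (3). We must choose $\alpha$ large enough that the instances, their finite subinstances, their solutions, and the formula ``$f$ is a solution'' all lie inside the isomorphic segment, and that finiteness and $\omega$ are computed correctly there. The other delicate point is verifying that the least-supports hypothesis of clause (4) is exactly what Theorem \ref{thm: wei} supplies. The remaining steps are routine absoluteness checks.
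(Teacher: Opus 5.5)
Your proposal is correct and follows essentially the same route as the paper: clause (3) of the embedding theorem carries the counterexamples $\s X,\s Y$ and the nonexistence of symmetric solutions into $\s M^G[\s G]$, while clause (4), the least-supports property from Theorem \ref{thm: wei}, and the linear orderability of $HS^G$ give a linear order on every set there, hence $\ac(2)$ and $K_{K_2}$. The only difference is that you take $\alpha=\omega+5$ instead of the paper's $\alpha=\omega$. This is harmless and arguably safer, since the $\bb F_2$ instance has relations of unbounded arity.
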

\begin{proof}
    Let $G=\aut(\bb B, \leq)$ as in the proof of Theorem \ref{thm: boolean structures}. In that proof, we found instances $\s X$ and $\s Y$ of $K_{2\sat}$ $K_{\bb F_2}$ respectively with $\s X, \s Y\in \s P^4(A)$. Any solution to these instances must be in $\s P^\omega(A)$. So, find a $\zf$ model $\s M^G[\s G]$ as in Theorem \ref{thm: embedding theorem} with $\alpha=\omega$. Then, $\tilde{ \s X}$, $\tilde {\s Y}$ witness that 
    \[M^{G}[\s G]\not\vDash K_{2\sat}, K_{\bb F_2}.\] 

    But, the order property can be verified in $\s M^G[\s G]$ exactly as in \cite[Section 5.5]{jech2008axiom}. Any set $X\in\s M^G[\s G]$ embeds into $\s P(\kappa)\times \tilde Y$ for some $Y\in HS^G$. There is a linear order $\leq$ of $Y$, so we can order $\tilde Y$ by $\tilde \leq$ in $\s M^{G}[\s G]$. Thus, we can linearly order $\s P(\kappa)$ and $\s P(\kappa)\times \tilde Y$ lexicographically. Finally, we can pull back the order on $\s P(\kappa)\times \tilde Y$ to an order on $X$. Thus, 
    \[M^G[\s G]\vDash K_{K_2}.\]
\end{proof}

\begin{prop}[Con(\zf)]
    $\zf+K_{2\sat}\not \vdash K_{\bb F_2}$
\end{prop}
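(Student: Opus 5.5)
We transfer the $\zfa$ separation for $K_{2\sat}$ and $K_{\bb F_2}$ (the theorem using the Felgner--Truss model) to $\zf$, in the same way as the previous proposition. Let $G$ be the automorphism group of the generic pair $(\bb B,\leq)$, where $\leq$ is a linear order extending $\sqsubseteq$. Let $\s M=HS^G$ be the permutation model from that theorem.

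\textbf{Step 1: least supports.} The finite subalgebras of $\bb B$ form a $G$-invariant cofinal family $\s I$. We need $\fix(\bb A\cap\bb C)=\ip{\fix(\bb A),\fix(\bb C)}$ for $\bb A,\bb C\in\s I$. This is the analogue of Theorem \ref{thm: wei} for the class of finite Boolean algebras with a linear order extending $\sqsubseteq$, and it is known from Felgner--Truss. To reprove it, rerun Appendix \ref{appendix: WEI}: amalgamate with Lemma \ref{lem:useful lemma 2}, then apply the step, bubble sort, and normal form lemmas. The only new check is that the amalgamated orders can be chosen to extend $\sqsubseteq$ on the pushout. Take the transitive closure of $\sqsubseteq$ together with the pushed-forward orders, and verify it is still antisymmetric. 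This verification is the main obstacle, since the pushout order $\sqsubseteq$ on $\s X\oplus_{\bb A}\s Y$ relates elements not lying in either factor. I would handle it by working with atoms of the pushout: an element lies below another iff this holds atomwise. Both pushforward orders extend $\sqsubseteq$ on their factors, so any cycle would project to a cycle within one factor over $\bb A$.

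\textbf{Step 2: failure of $K_{\bb F_2}$.} Apply Theorem \ref{thm: embedding theorem} with $\alpha=\omega$ to get $\s M^G[\s G]\vDash\zf$ and the embedding $\tilde\cdot$. The instance $\s X$ of $\bb F_2$ built from $\bb B$ lies in $\s P^n(A)$ for some small $n$. Every candidate solution $X\to\{0,1\}$ also lies in $\s P^\omega(A)$. By item (3), $\tilde{\s X}$ has no solution in $\s M^G[\s G]$. Its finite subinstances are solvable, because finite solutions are in the ground model and are mapped over. So $\s M^G[\s G]\vDash\neg K_{\bb F_2}$.

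\textbf{Step 3: $K_{2\sat}$ holds.} By Rorabaugh--Tardif--Wehlau, the Order Extension Principle implies $K_{2\sat}$ over $\zf$. So it suffices to show $\s M^G[\s G]\vDash$ OEP, following Felgner--Truss and the $\zf$ transfer in \cite{jech2008axiom}. Take a partial order $(X,R)$ in $\s M^G[\s G]$. By item (4), fix an injection $f:X\to\s P(\kappa)\times\tilde Y$ with $Y\in HS^G$. Pulling back transfers the problem to an HS partial order on a set built from atoms and subsets of $\kappa$. We use the argument that OEP holds in $\s M$ via supports: $\s M$ linearly orders every set, using the lexicographic order on supports given by $\leq$ together with ordinals for orbits. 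We also use the Felgner--Truss extension argument, which orders within orbits compatibly with $\sqsubseteq$. The $\s P(\kappa)$ coordinate carries a definable well order from the ground model, and we order lexicographically. This is exactly where least supports from Step 1 are used, so that each element has a canonical support that can be fed into the ordering. Combining Steps 2 and 3 gives $\s M^G[\s G]\vDash\zf+K_{2\sat}+\neg K_{\bb F_2}$.
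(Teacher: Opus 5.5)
Your overall route is the paper's: transfer the Felgner--Truss permutation model through Theorem \ref{thm: embedding theorem}. Your Step 2 is exactly the paper's argument for $\neg K_{\bb F_2}$. The gap is Step 3. It is the only nontrivial part of the transfer, and the paper does not reprove it: it cites Felgner and Truss, who showed that their OEP argument survives the embedding. Your sketch fails at three concrete points.

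\begin{enumerate}
\item \emph{There is no HS partial order to pull back to.} Transport $R$ along $f$ to $f[X]\subseteq \s P(\kappa)\times\tilde Y$. The result is an element of $\s M^G[\s G]$, but in general it is not $\tilde S$ for any $S\in HS^G$. Item (3) only gives surjectivity onto $\s P^\alpha(\tilde A)$. Meanwhile $\s P(\kappa)$ of the extension contains the new generic subsets of $\kappa$ that make up the $\tilde a$, and none of these lie in the range of $\tilde\cdot$. So OEP in $\s M$ has nothing to apply to.
\item \emph{$\s P(\kappa)$ carries no well-order in $\s M^G[\s G]$.} A well-order of the generic sets coding the atoms would well-order $\tilde A$. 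By item (3) this would give a well-order of $A$ in $HS^G$, which is impossible. $\s P(\kappa)$ only has the lexicographic linear order.
\item \emph{A lexicographic order does not extend $R$.} Even with linear orders on both coordinates, ordering $\s P(\kappa)\times\tilde Y$ lexicographically gives \emph{some} linear order of $X$, not one extending $R$. What you have reproduced is the argument that every set can be linearly ordered, which the paper uses for $K_{K_2}$ in the preceding proposition. Linear orderability of every set does not give linear extensions of arbitrary partial orders. Likewise, in $\s M$ itself OEP does not follow from ``every set is linearly ordered via supports''; the Felgner--Truss proof uses essentially that $\leq$ extends $\sqsubseteq$.
\end{enumerate}

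To repair the proof you need an actual transfer argument for OEP, not just the injection from item (4). The simplest fix is to invoke Felgner--Truss's result that OEP holds in the associated $\zf$ model, as the paper does. With that citation your Step 1 becomes unnecessary, since you needed least supports only to obtain item (4). The proof then reduces to the paper's short argument.
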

\begin{proof}
    The \zfa model we use comes from work of Truss and Felgner \cite{FelgnerTruss}. In their paper, they show that their argument for the Order Extension Principle also passes through the embedding, and so too does our counterexample to $K_{\bb F_2}$ (just as above).
\end{proof}

Unfortunately, our results about $K_{C_p}$ do not pass immediately through the embedding theorems. One can use more delicate forcing arguments to adapt them into a $\zf$ construction, but we will provide a different argument below. The proof of Theorem \ref{thm: finite choice} gives:

\begin{prop}
    For any prime $p$, \[\zf+K_{C_p}+\bigwedge_{n<p} \ac(n)\vdash \ac(p).\]
\end{prop}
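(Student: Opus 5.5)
We follow the inductive step $(3)\Rightarrow(2)$ in the proof of Theorem \ref{thm: finite choice} with $n+1=p$, and check that only $K_{C_p}$, and no $K_{C_i}$ with $i<p$, is actually needed once $\bigwedge_{n<p}\ac(n)$ is assumed. Let $F$ be a family of sets of size at most $p$. Using $\ac(p-1)$ (one of the hypotheses) we may restrict to the $p$-element members of $F$. For these we form the family $F'=\{S\setminus\{x\}:x\in S\in F\}$ of $(p-1)$-element sets and fix a choice function $c$ for it, again by $\ac(p-1)$. Then each $S$ carries the functional digraph $G_S$, whose edge from $a$ goes to $c(S\setminus\{a\})\neq a$. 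The family $(G_S)_{S\in F}$ is definable from $c$, so no further choice is needed to produce it.

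We split $F$ into two parts. For those $S$ with $G_S$ \emph{not} a directed $p$-cycle, we choose an element canonically from the graph structure together with finite choice below $p$. Let $T_S\subsetneq S$ be the set of vertices lying on cycles of $G_S$; every functional digraph has at least one cycle, so $T_S\neq\varnothing$. If $T_S\neq S$, we apply $\ac(|T_S|)$ to $T_S$, or equally to $S\setminus T_S$; both sets are nonempty and have size $<p$. Formally, we partition these $S$ according to the value $k=|T_S|\in\{1,\dots,p-1\}$ and use $\ac(k)$ on each piece. If $T_S=S$ but $G_S$ is not a single $p$-cycle, then $G_S$ is a disjoint union of at least two cycles, each of length $<p$. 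In that case we choose one cycle using $\ac(m)$, where $m<p$ is the number of cycles, and then a vertex of that cycle using $\ac(\ell)$, where $\ell$ is its length. All of these selections are uniform in $S$, and the resulting finitely many partial choice functions are patched together. This part is where the hypothesis $\bigwedge_{n<p}\ac(n)$ replaces the $K_{C_i}$ ($i<p$) that the earlier proof used, and that replacement is the key observation.

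For the remaining $S$, $G_S$ is a directed $p$-cycle. The disjoint union $\bigsqcup_S G_S$ is then a graph of disjoint directed $p$-cycles. Every finite subgraph maps to $C_p$, so $K_{C_p}$ gives a homomorphism $h:\bigsqcup_S G_S\to C_p$. On each $p$-cycle, $h$ is a bijection onto $C_p$, so $h^{-1}(0)\cap S$ is a single point, and we choose it. Combining the two parts gives a choice function on $F$.

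The main obstacle is bookkeeping: making sure that every use of choice in the non-cycle case really involves sets of size strictly less than $p$, and that the case split is definable. The argument there relies on each set being chosen from as a whole, namely the cycle vertices, the non-cycle vertices, or the cycles and then a vertex within one. The only point needing care is that some cycle component is always proper when $G_S$ is not a $p$-cycle, which holds because a functional digraph on $p$ vertices whose vertices all lie on cycles is a $p$-cycle exactly when it has one component.
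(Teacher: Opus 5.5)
Your argument is correct, but it treats the sets $S$ for which $G_S$ is not a $p$-cycle differently from the paper.

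The paper gives no separate proof. It reads the proposition off the inductive step of $(3)\Rightarrow(2)$ in Theorem~\ref{thm: finite choice} with $n+1=p$. There, a disconnected $G_S$ is handled as you handle one: choose a component, then a vertex. A connected $G_S$ that is not a $p$-cycle is handled differently. The paper uses the auxiliary lemma that such a graph maps homomorphically to a shorter cycle $C_i$. It then applies $K_{C_i}$ to the union of all such $G_S$, and applies $\ac(p-1)$ to the least nonempty fibre, which is a proper subset of $S$. Since $K_{C_i}$ for $i<p$ is not among the hypotheses, it has to be extracted from $\ac(i)$, for instance by choosing one of the $i$ homomorphisms on each weakly connected component.

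You replace all of this with a canonical object. The set $T_S$ of cyclic vertices is a nonempty proper subset of $S$ whenever $G_S$ has a tail. Otherwise $G_S$ is the graph of a permutation with at least two shorter cycles. So $\ac(p-1)$ alone handles every non-$p$-cycle case, and $K_{C_p}$ is invoked only on genuine $p$-cycles.

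What your route buys:
\begin{itemize}
\item It is more elementary and self-contained: no homomorphisms to shorter cycles, and no need for $K_{C_i}$ with $i<p$.
\item It makes plain that primality of $p$ is irrelevant. You actually prove $\ac(n-1)+K_{C_n}\vdash\ac(n)$ over $\zf$ for every $n\ge 2$.
\item Your formal disjoint union $\bigsqcup_S G_S$ correctly handles overlapping members of $F$, which the paper's $\bigcup_S G_S$ glosses over.
\end{itemize}
The paper's route, in exchange, simply reuses the machinery already set up for the theorem.

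One minor slip: you introduce $T_S$ as $T_S\subsetneq S$, although $T_S=S$ can occur. You treat that case correctly right afterwards.
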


\begin{cor}
    For any prime $p$, \[\zf+\bigwedge_{q\;\mathrm{prime}\;q\not=p}  K_{C_q} \not \vdash K_{C_p}\]
\end{cor}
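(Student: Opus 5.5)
Fix a prime $p$. The plan is to produce a model of $\zf$ in which every $K_{C_q}$ with $q\neq p$ prime holds, $\ac(n)$ holds for all $n<p$, but $\ac(p)$ fails. By the preceding proposition, $\zf+K_{C_p}+\bigwedge_{n<p}\ac(n)\vdash\ac(p)$, so $K_{C_p}$ must then fail in that model, which gives the corollary.

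The model I would use is a classical Fraenkel--Mostowski-style model transferred to $\zf$ by Jech's embedding theorem (Theorem \ref{thm: embedding theorem}). The natural candidate is the cyclic-rotation model $\s M_p$ of Theorem \ref{thm: antichain}, or rather its standard $\zf$ analogue. This is a symmetric extension that adds countably many pairwise disjoint $p$-element sets of Cohen reals and symmetrizes by rotating each set independently, with finite supports. Equivalently, one works in Jech's $\zf$ model for ``$\ac(n)$ for $n<p$ but not $\ac(p)$''. The family of these $p$-sets has no choice function, so $\ac(p)$ fails. The group $G_p$ has least supports, with $\s I$ the finite unions of cycles, so clause (4) of Theorem \ref{thm: embedding theorem} is available. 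That clause lets us reduce questions about arbitrary sets in $\s M^{G}[\s G]$ to sets of the form $\s P(\kappa)\times\tilde Y$ with $Y\in HS^G$.

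Next I would verify $K_{C_q}$ for $q\neq p$, and $\ac(n)$ for $n<p$, in the $\zf$ model. The argument mirrors the proof of Theorem \ref{thm: antichain}, now applied to the group acting on symmetric names. Every element of that group that acts nontrivially on the relevant finite part has order a power of $p$ on each orbit. Fix a hereditarily symmetric name for a graph consisting of disjoint $q$-cycles (reduced to this case by the earlier lemma), together with a finite support $E$. The stabilizer of the graph cannot rotate any single cycle: a rotation would have order $q$ on that cycle, but it must also have $p$-power order, and $\gcd(p,q)=1$. So orbits meet each component in at most one vertex. We can therefore color the quotient by orbits in the ground model and pull the coloring back to get a symmetric solution. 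The argument for $\ac(n)$ with $n<p$ is the same: an orbit of a $p$-group acting on a set of size $n<p$ is a single point, so every element is fixed and we can choose one canonically.

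The main obstacle is the transfer step. As the paper notes, these statements about $K_{C_q}$ do not pass automatically through the embedding theorem, because instances in the $\zf$ model need not come from $HS^G$. I would handle this directly with the symmetric-extension description. Work with names, where the symmetry group is $G_p$ times permutations of $\kappa$ fixing everything outside a finite set. The permutation part can be absorbed by the least-supports clause: use the injection into $\s P(\kappa)\times\tilde Y$ to transport the instance to one indexed by $\tilde Y$ and ordinals, where ordinals are canonically well-ordered, and then run the orbit argument on the $Y$-coordinate. If this becomes delicate, the fallback is to cite the known $\zf$ versions of the Jech/Pincus transfer results for ``$\ac(n)$ for $n<p$ and $\neg\ac(p)$'', and verify $K_{C_q}$ using the $p$-group orbit argument in that model.
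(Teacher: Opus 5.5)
\paragraph{Verdict.} Your top-level reduction matches the paper's: exhibit a $\zf$ model of $\bigwedge_{q\neq p}K_{C_q}$ plus $\ac(n)$ for $n<p$ plus $\neg\ac(p)$, and let the preceding proposition rule out $K_{C_p}$. The gap is that you never actually obtain this model, and that is the whole content of the corollary.

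\paragraph{Why your transfer step fails.} The orbit argument of Theorem \ref{thm: antichain} is an argument about the permutation model, and the mechanism you sketch for running it in $\zf$ does not work as described.
\begin{itemize}
\item In the Jech--Sochor extension the group acting on names contains permutations of $\kappa$, so its elements are not of $p$-power order.
\item Transporting an instance along an injection $X\to\s P(\kappa)\times\tilde Y$ does not put it in the range of $\tilde\cdot$, because $\s P(\kappa)$ of the $\zf$ model contains the new generic subsets.
\item The $\tilde Y$-projection of a $q$-cycle is just a set of at most $q$ points with no $C_q$-structure. So there is no instance on $Y$ on which to ``run the orbit argument''.
\item If you instead use the plain symmetric extension by Cohen reals indexed by $\bb N\times\bb Z/p\bb Z$ with the rotation group, the group does have exponent $p$. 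But then ``the stabilizer cannot rotate a cycle'' must be proved for names and forcing conditions. That is precisely the ``more delicate forcing argument'' the paper declines to give, and you do not supply it either.
\end{itemize}
Your fallback does not close the gap. A $\zf$ model of $\bigwedge_{n<p}\ac(n)+\neg\ac(p)$ need not satisfy $K_{C_q}$ for primes $q>p$. You would again be left verifying $K_{C_q}$ by an orbit argument inside a $\zf$ model.

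\paragraph{The missing observation.} No orbit argument is needed at all. By the lemma reducing $K_{C_q}$ to disjoint unions of directed $q$-cycles, choice for families of $q$-element sets gives $K_{C_q}$ outright: pick one vertex in each cycle and send it to $0$.

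So it suffices to have a $\zf$ model in which choice holds for families of $n$-element sets whenever $p\nmid n$, but fails for $p$-element sets. This is a pure finite-choice independence result. The paper obtains it from Truss's $A(Z,v)$ criterion, with the obstruction being the regular action of $\bb Z/p\bb Z$ on $p$ points. In that model $\ac(n)$ holds for all $n<p$ and $K_{C_q}$ holds for every prime $q\neq p$, so the proposition forces $K_{C_p}$ to fail.

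Here $\ac(n)$ must be read as choice for $n$-element sets rather than sets of size at most $n$; otherwise $\ac(p+1)$ would already give $\ac(p)$. If you want to keep a direct forcing proof, what you need to verify in the $\zf$ model is this finite-choice statement, not $K_{C_q}$ via cycles.
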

\begin{proof}
    It follows from Truss's $A(Z,v)$ criterion \cite{Truss} that \[\zf+\bigwedge_{\{n: p\;\not | n\}} \ac(n)\not\vdash \ac(p).\] So, by the proposition above,
    \[\zf+\bigwedge_{\{n: p\;\not | n\}} \ac(n)\not\vdash K_{C_p}.\] But clearly 
    \[\zf+\bigwedge_{\{n: p\;\not | n\}} \ac(n)\vdash K_{C_q}\] for all primes $q\not=p$. Thus, $\zf+\bigwedge_{q\;\mathrm{prime}\;q\not=p}  K_{C_q} \not \vdash K_{C_p}$.
\end{proof}

\end{document}